\documentclass{amsart}
\usepackage{amsfonts,amssymb,latexsym}
\usepackage{amsmath,amsthm}
\usepackage{eucal}
\usepackage[latin1]{inputenc}

\newtheorem{theorem}{Theorem}[section]
\newtheorem{lemma}[theorem]{Lemma}
\newtheorem{proposition}[theorem]{Proposition}

\theoremstyle{definition}

\theoremstyle{remark}
\newtheorem{remark}[theorem]{Remark}
\def\R{{\mathbb R}}

\newcommand{\Z}{\mathbb{Z}}

\numberwithin{equation}{section}

\title[Higher order Benjamin-Ono equation]{Global well-posedness and limit behavior for a higher-order Benjamin-Ono
equation}
\author[L. Molinet and D. Pilod]{Luc Molinet$^{\dagger}$ and Didier Pilod$^{\ddagger}$}
\thanks{$^{\ddagger}$ Partially supported by CNPq/Brazil, grant 200001/2011-6}
\subjclass[2010]{Primary 35Q53, 35A01; Secondary 76B55}
\keywords{Initial value problem, Benjamin-Ono equation, gauge
transformation}
\date{}
\begin{document}
\maketitle

\vspace{-0.5cm}

{\scriptsize \centerline{$^{\dagger}$ LMPT, Université François Rabelais
Tours, Fédération Denis Poisson-CNRS,}
             \centerline{ Parc Grandmont, 37200 Tours, France.}
             \centerline{email: Luc.Molinet@lmpt.univ-tours.fr}

\vspace{0.1cm}
             \centerline{$^{\ddagger}$ UFRJ, Instituto de Matemática,
             Universidade Federal do Rio de Janeiro,}
              \centerline{Caixa Postal 68530, CEP: 21945-970, Rio
              de Janeiro, RJ, Brazil.}
              \centerline{email: didier@im.ufrj.br}}

\vspace{0.5cm}

\begin{abstract} 
In this paper, we prove that the Cauchy problem associated to the following higher-order Benjamin-Ono equation
\begin{equation} \label{ahoBO}
\partial_tv-b\mathcal{H}\partial^2_xv-
           a\epsilon \partial_x^3v=cv\partial_xv-d\epsilon
           \partial_x(v\mathcal{H}\partial_xv+\mathcal{H}(v\partial_xv)),
\end{equation}
is globally well-posed in the energy space $H^1(\mathbb R)$. Moreover, we study the limit behavior when the small positive parameter $\epsilon$ tends to zero and show that, under a condition on the coefficients $a$, $b$, $c$ and $d$, the solution $v_{\epsilon}$ to \eqref{ahoBO} converges to the corresponding  solution of the Benjamin-Ono equation.
\end{abstract}

\section{Introduction}
Considered here is the following higher-order Benjamin-Ono equation
\begin{equation} \label{hoBO}
\partial_tv-b\mathcal{H}\partial^2_xv-
           a\epsilon \partial_x^3v=cv\partial_xv-d\epsilon
           \partial_x(v\mathcal{H}\partial_xv+\mathcal{H}(v\partial_xv)),
\end{equation}
where $x$, $t \in \mathbb R$, $v$ is a real-valued function, $a$,
$b, \ c$ and $d$ are positive constants, $\epsilon>0$ is a small
positive parameter and $\mathcal{H}$ is the Hilbert transform,
defined on the line by
\begin{equation} \label{hilbert}
\mathcal{H}f(x)=\text{p.v.}\,\frac1\pi\int_{\mathbb
R}\frac{f(y)}{x-y}dy.
\end{equation}

The equation above corresponds to a second order approximation of
the unidirectional evolution of weakly nonlinear dispersive internal
long waves at the interface of a two-layer system of fluids, the
lower one being infinitely deep. It was derived by Craig, Guyenne
and Kalisch (see equation (5.38) in \cite{CGK}), using a Hamiltonian
perturbation theory. Here, $v$ represents the dislocation of the
interface around its position of equilibrium, the coefficients $a$,
$b$, $c$ and $d$ are respectively given by
\begin{equation} \label{ab}
a=\frac{h_1^2}2\big(\frac{\rho^2}{\rho_1^2}-\frac13
\big)\sqrt{\frac{gh_1(\rho-\rho_1)}{\rho_1}}, \quad \quad
b=\frac{\rho
h_1^2}{2\rho_1^2}\sqrt{\frac{g\rho_1(\rho-\rho_1)}{h_1}},
\end{equation}
\begin{equation} \label{cd}
c=\frac{3\sqrt{2}}{4\rho_1}\sqrt[4]{\frac{g\rho_1(\rho-\rho_1)}{h_1}}
\quad \text{and} \quad d=\frac{\sqrt{2}\rho
h_1}{2\rho_1^2}\sqrt[4]{\frac{g\rho_1(\rho-\rho_1)}{h_1}},
\end{equation}
where $h_1$ represents the depth of the upper layer when the fluid
is at rest, $\rho_1$ is the density of the upper fluid and $\rho$ is
the density of the lower fluid. Moreover, the system is assumed to
be in a stable configuration, which is to say  that $\rho>\rho_1$,
so that the coefficients $a$, $b$, $c$ and $d$ are positive.

It is worth noting that the equation obtained at the first order approximation  of the above physical model is the well-known Benjamin-Ono equation
\begin{equation} \label{BO} 
\partial_tv-b\mathcal{H}\partial^2_xv=cv\partial_xv,
\end{equation}
and therefore equation \eqref{hoBO} can be seen as an higher-order perturbation of equation \eqref{BO}. Moreover,
the quantities
\begin{equation} \label{M}
M(v)=\int_{\mathbb R}v^2dx
\end{equation}
and
\begin{equation} \label{H}
H(v)=\int_{\mathbb R}\left(a\epsilon(\partial_xv)^2-bv\mathcal{H}\partial_xv
-\frac{c}{3}v^3+d\epsilon v^2\mathcal{H}\partial_xv\right)dx
\end{equation}
are conserved by the flow associated to \eqref{hoBO}. 

The initial value problem (IVP) associated to the Benjamin-Ono equation on the line has been extensively studied in the recent years and has been proved to be globally well-posed in $L^2(\mathbb R)$ by Ionescu and Kenig in \cite{IK} (see \cite{MP} for another proof and \cite{abfs, BP, Io, KK, KT,  Po1, Ta} for former results).  The IVP associated to \eqref{hoBO} presents the same mathematical difficulties as for the Benjamin-Ono equation. Indeed, it has been shown in \cite{Pi} that the flow map data-solution cannot be $C^2$ in any $L^2$-based Sobolev space $H^s(\mathbb R)$, $s \in \mathbb R$, by using the same counter-example as for the Benjamin-Ono equation in \cite{MST}. On the other hand,  the Cauchy problem associated to \eqref{hoBO} was proved in \cite{LPP} to be locally well-posed in $H^s(\mathbb R)$, for $s \ge 2$ (and also in weighted Sobolev spaces $H^k(\mathbb R) \cap L^2(\mathbb R; x^2dx)$, for $k \in \mathbb Z_+$, $k \ge 2$). However, there are no conserved quantities at the $H^2$ level and thus it is not known wether these local solutions extend globally in time or not. Therefore, as commented in \cite{LPP}, the question of the local well-posedness in $H^1(\mathbb R)$, which would directly imply global well-posedness by using \eqref{M} and \eqref{H}, arises naturally. 

The first aim of this paper is to give a positive answer to this issue. The result states as follows. 
\begin{theorem} \label{theo1} 
Fix $\epsilon>0$ and let $s \ge 1$ be given. Then, for all $v_0 \in H^s(\mathbb R)$ and all $T>0$, there exists a unique solution 
$v$ to equation \eqref{hoBO} in the space
\begin{equation} \label{theo1.1} 
 C([0,T];H^s(\mathbb R))\cap L^4_TW^{s,4}_x\cap L^2_xL^{\infty}_T\cap  X^{s-2\theta,\theta}_{\epsilon,T}, \quad \text{for all} \ 0 \le \theta \le 1.
\end{equation}
satisfying 
\begin{equation}
v(\cdot,0)=v_0
\end{equation} 
and 
\begin{equation} \label{theo1.2} 
w=\partial_xP_{+hi}\big(e^{-iF[v]}\big) \in X^{s,\frac12,1}_{\epsilon,T},
\end{equation}
where $F[v]$ is a spatial primitive of $v$ defined in Section \ref{gt}. 

Moreover, $v \in C_b(\mathbb R;H^1(\mathbb R))$ and the flow map data-solution $S_{\epsilon}:v_0 \mapsto v$ is continuous from $H^s(\mathbb R)$ into $C([0,T];H^s(\mathbb R))$.
\end{theorem}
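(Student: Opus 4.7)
The plan is to follow the gauge-transformation strategy pioneered by Tao and refined by Ionescu--Kenig, Molinet--Pilod for the Benjamin--Ono equation, suitably adapted to the higher-order dispersion in \eqref{hoBO}. Since \cite{Pi,MST} rule out a $C^2$ flow on any $H^s$, one cannot close a direct Picard iteration on $v$; instead, the nonresonant high-low interactions that destroy smoothness of the data-to-solution map must be removed. With $F[v]$ a spatial primitive of $v$, the gauged variable
\[
w=\partial_xP_{+hi}\bigl(e^{-iF[v]}\bigr)
\]
satisfies an equation whose worst quadratic interaction is cancelled, and the remaining terms are controlled in Bourgain-type spaces $X^{s,b}_{\epsilon}$ associated to the symbol $\tau=-b|\xi|\xi-a\epsilon\xi^3$. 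The resolution space \eqref{theo1.1}--\eqref{theo1.2} is exactly tailored to the three dispersive ingredients: the $L^4_TW^{s,4}_x$ Strichartz bound, the $L^2_xL^\infty_T$ smoothing of Kenig--Ponce--Vega type, and the $X^{s,1/2,1}_\epsilon$ Bourgain regularity that captures the interaction between the BO and KdV dispersions.

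The first step is to derive the equation for $w$ from \eqref{hoBO} using the identity $\partial_t F[v]=$ nonlinear terms $+$ dispersive terms obtained by integrating once in $x$, and to check that the high-low leading-order term of $v\partial_x v$ is absorbed into the exponential phase by construction. Second, I would prove the linear and multilinear estimates for the $\epsilon$-dependent group on the spaces in \eqref{theo1.1}--\eqref{theo1.2}, then close a contraction for $w$ on a small time interval whose length depends only on $\|v_0\|_{H^1}$ (uniformly for $s\geq 1$ on bounded balls of $H^s$). Third, one inverts the gauge relation: since $P_{+hi}(e^{-iF[v]})$ determines $v$ up to low-frequency reconstruction handled by the conservation of $M$ and Strichartz estimates, recovery of $v\in C([0,T];H^s)$ from $w\in X^{s,1/2,1}_{\epsilon,T}$ is standard once the multilinear estimates are established. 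Uniqueness in the full class \eqref{theo1.1}--\eqref{theo1.2} and continuity of $S_\epsilon$ follow from the corresponding Lipschitz estimates applied to differences $w_1-w_2$, with a Bona--Smith regularization argument for the rough-data limit.

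The hard part will be the multilinear estimates for the gauged equation. Although the gauge removes the principal resonance, the third-derivative nonlinearity $d\epsilon\partial_x(v\mathcal H\partial_x v+\mathcal H(v\partial_x v))$ produces new quadratic and cubic contributions; their control requires exploiting, on the relevant dyadic blocks, the stronger KdV-type dispersion at frequencies $|\xi|\gtrsim \epsilon^{-1/2}$ and the BO-type dispersion below that scale, together with the $+/-$ frequency decomposition coming from $P_{+hi}$. This is where the main technical work of the paper will concentrate, and where the explicit form of the spaces in \eqref{theo1.1}--\eqref{theo1.2} becomes essential.

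Finally, globalization in $H^1$ comes from the conservation laws \eqref{M} and \eqref{H}. The term $a\epsilon\int(\partial_x v)^2$ in $H$ is the leading coercive term; the lower-order contributions $\int v\mathcal H\partial_x v$ and $\int v^3$ are bounded by interpolation and Gagliardo--Nirenberg in terms of $\|v\|_{L^2}$ and $\|\partial_x v\|_{L^2}$, while the quartic-looking $d\epsilon\int v^2\mathcal H\partial_x v$ is controlled by $\|v\|_{L^4}^2\|\partial_x v\|_{L^2}$ and then by $\|v\|_{L^2}^{3/2}\|\partial_x v\|_{L^2}^{3/2}$. Together with $M(v)=\|v_0\|_{L^2}^2$, this yields an a priori $H^1$ bound depending only on $\|v_0\|_{H^1}$ and $\epsilon$, hence the local solution extends to $C_b(\R;H^1(\R))$. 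Persistence of higher regularity $s>1$ is then obtained by commutator energy estimates once the $H^1$ solution is under control.
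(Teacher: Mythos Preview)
Your overall architecture---gauge transformation, Besov--Bourgain spaces $X^{s,1/2,1}_\epsilon$, the triad of Strichartz/Kato/maximal estimates, and globalization via \eqref{M}--\eqref{H}---matches the paper. But there is one substantive methodological divergence and one missing ingredient.

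\medskip
\textbf{Contraction versus a priori estimates.} You propose to ``close a contraction for $w$''. The paper does \emph{not} run a fixed-point argument. Instead it starts from the smooth solutions already available from \cite{LPP} (Proposition~\ref{smoothsolutions}), derives \emph{a priori} bounds on the combined functional
\[
N^s_T(v)=\max\big\{\|v\|_{L^{\infty}_TH^s_x},\ \|J^s_xv\|_{L^4_{x,T}},\ \|v\|_{L^2_xL^{\infty}_T},\ \|J^s_x\partial_xv\|_{\widetilde{L^{\infty}_xL^2_T}},\ \|w\|_{X^{s,1/2,1}_T}\big\}
\]
via Propositions~\ref{apriori v} and~\ref{apriori w}, and closes these by a continuity argument after a scaling reduction to small $H^1$ data (the paper fixes $\epsilon=1$ and scales the solution, not the dispersion, so your remark about an $\epsilon^{-1/2}$ frequency threshold is not how Theorem~\ref{theo1} is actually proved). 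Existence for $v_0\in H^s$ then comes by approximating with frequency-truncated data and showing the smooth solutions form a Cauchy sequence. The obstacle to a direct contraction on $w$ is that the gauged equation \eqref{eqw} is not autonomous: its right-hand side involves $v$, $W$, and $e^{iF}$, and the inversion $w\mapsto v$ is itself nonlinear. An Ionescu--Kenig style fixed point on the coupled system may be feasible, but it requires more structure than you have sketched; the paper deliberately bypasses this in favor of compactness.

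\medskip
\textbf{Low-frequency matching for uniqueness.} For uniqueness and continuity the paper does not simply estimate $w_1-w_2$. It first proves a Lipschitz bound on the flow restricted to pairs of initial data sharing the \emph{same low-frequency part}. This restriction is essential because the gauge involves the primitive $F[v]$, and controlling $G=F[v_1]-F[v_2]$ in $L^\infty_{t,x}$ requires a separate argument (split $G=G_{lo}+G_{hi}$, use the equation satisfied by $G_{lo}$ evolving from zero, and Bernstein for $G_{hi}$). Your Bona--Smith remark is in the right spirit, but without this low-frequency matching step the difference estimates do not close.
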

Note that above $H^s(\mathbb R)$ denotes the space of all real-valued functions with the usual norm, while $X^{s,b}_{\epsilon,T}$ and $X^{s,b,q}_{\epsilon,T}$ are Bourgain spaces defined in Subsection \ref{spaces}. 

Since it follows from the result of ill-posedness in \cite{Pi} that the Cauchy problem associated to \eqref{hoBO} cannot be solved by using a fixed point theorem on the integral equation, we use a compactness argument based on the smooth solutions obtained in \cite{LPP}. To derive \textit{a priori} estimates at the $H^1$ level, we introduce a gauge transformation which weakens the high-low frequency interactions in the nonlinearity of \eqref{hoBO}, as it was done by Tao in \cite{Ta} for the Benjamin-Ono equation. Note that the same kind of gauge transformation was already introduced in \cite{LPP} to obtain the solutions in $H^2(\mathbb R)$.  However, to lower the regularity till $H^1(\mathbb R)$, we will need to combine this transformation with the use of Bourgain's spaces (as it was already done in \cite{BP, IK, MP} for BO).  More precisely, we need to work in a Besov version of Bourgain's spaces (introcuded in \cite{Tat} in the context of waves maps). Indeed, on one hand we have to work in Bourgain' spaces of conormal regularity $ 1/2 $ to establish the main bilinear estimate (see Proposition \ref{bilin} below). On the other hand, to control some remaining terms appearing in the transformation, we need the full Kato smoothing effect for functions that are localized in space frequencies (see Proposition \ref{bilinb}). The rest of the proof follows closely the one in \cite{MP} for the Benjamin-Ono equation (see also \cite{Mo}).

In the second part of this article, we investigate the limit behavior of the solutions $v_{\epsilon}$ to \eqref{hoBO}, obtained in Theorem \ref{theo1}, as $\epsilon$ tends to zero.  First, it is interesting to observe that a direct argument based on compactness methods (see for example \cite{Mo2} in the case of the Benjamin-Ono-Burgers equation) does not seem to work. Indeed, the leading terms in the energy $H$, which is to say $a\epsilon(\partial_xv)^2$ and $bv\mathcal{H}v$, have opposite signs, so that \eqref{M} and \eqref{H} do not provide \textit{a priori} bounds, uniformly in $\epsilon$, on $\epsilon\|v_{\epsilon}\|_{H^1}^2+\|v_{\epsilon}\|_{H^{\frac12}}^2$.  Therefore, the problem of studying the limit of $v_{\epsilon}$, as $\epsilon$ goes to zero, turns out to be far from trivial.

Nevertheless, we are able to prove the convergence of solutions of \eqref{hoBO} toward a solution of the Benjamin-Ono equation in the special case where the ratio of the densities is equal to $\sqrt{3}$. 
 \begin{theorem} \label{theo2} 
Assume that  $ \frac{3ac}{4d}=b \quad
\Leftrightarrow \quad \rho^2=3\rho_1^2$. Let  $v_0 \in H^1(\mathbb R)$ and for any $ \varepsilon >0 $ denote by  $ S_\varepsilon(t) v_0 \in C(\R; H^1(\R)) $   the solution to  \eqref{hoBO} emanating from $ v_0$. Then for any $ T>0 $ it holds 
\begin{equation}
\|S_\varepsilon (t) v_0 - S(t) v_0 \|_{L^\infty(0,T; H^1(\R))} \longrightarrow 0 \mbox{ as } \varepsilon \to 0  
\end{equation}
where $ S(t) v_0 $ is the solution to the Benjamin-Ono equation emanating from $ v_0$.
\end{theorem}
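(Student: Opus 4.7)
The plan is to exploit the algebraic condition $3ac/(4d) = b$ to construct a modified energy that controls $v_\epsilon$ uniformly in $\epsilon$ at the $H^1$ level, and then to combine a Bona--Smith regularization with the bilinear machinery behind Theorem \ref{theo1} in order to transfer convergence from smooth data to general $H^1$ data.

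\textbf{Step 1: uniform $H^1$ bound.} The functional $H$ in \eqref{H} is not coercive because the leading terms $a\epsilon(\partial_x v)^2$ and $-bv\mathcal{H}\partial_x v$ produce $a\epsilon\|\partial_x v\|_{L^2}^2 \ge 0$ and $-b\|v\|_{\dot{H}^{1/2}}^2 \le 0$, which have opposite signs. I would seek a corrected functional
\begin{equation*}
\mathcal{E}_\epsilon(v) = \lambda M(v) + \mu H(v) + \epsilon \int_{\R}\bigl(\alpha v^2\mathcal{H}\partial_x v + \beta v^4\bigr)dx,
\end{equation*}
whose time derivative along \eqref{hoBO} is controlled by a polynomial in lower norms with constants independent of $\epsilon$, and which is coercive in the sense that $\mathcal{E}_\epsilon(v) + P(\|v\|_{L^2}) \gtrsim \|v\|_{H^{1/2}}^2 + \epsilon\|\partial_x v\|_{L^2}^2$. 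Substituting \eqref{hoBO} and integrating by parts produces a variety of sign-indefinite quartic and quintic terms at orders $1$ and $\epsilon$; the relation $3ac/(4d) = b$ is expected to be precisely the algebraic identity that forces the worst of these to cancel. Combined with Gronwall, this should yield $\|v_\epsilon\|_{L^\infty_T H^1} \le C(T,\|v_0\|_{H^1})$ uniformly in $\epsilon \in (0,1]$.

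\textbf{Step 2: convergence for smooth data and Bona--Smith splitting.} For $v_0^\delta = P_{\le 1/\delta}v_0 \in H^\infty$, let $v_\epsilon^\delta = S_\epsilon(t)v_0^\delta$ and $v^\delta = S(t)v_0^\delta$. The difference $r_\epsilon^\delta = v_\epsilon^\delta - v^\delta$ satisfies a linear BO-type equation driven by source terms of size $\epsilon$ involving higher derivatives of $v_\epsilon^\delta$ and $v^\delta$; since $v^\delta \in H^\infty$ and $v_\epsilon^\delta$ obeys the uniform $H^1$ bound of Step~1, a classical BO energy estimate gives $\|r_\epsilon^\delta\|_{L^\infty_T H^1} \le C(T,\delta)\epsilon$. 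Splitting
\begin{equation*}
\|S_\epsilon(t)v_0 - S(t)v_0\|_{L^\infty_T H^1} \le \|v_\epsilon - v_\epsilon^\delta\|_{L^\infty_T H^1} + \|r_\epsilon^\delta\|_{L^\infty_T H^1} + \|v^\delta - v\|_{L^\infty_T H^1},
\end{equation*}
the last term is handled by $H^1$ well-posedness of BO and the middle one by the estimate just described. The remaining first term is reduced to an $\epsilon$-uniform modulus of continuity for $S_\epsilon$ on bounded sets of $H^1$, which I would obtain by revisiting the proof of Theorem \ref{theo1} and checking that the gauge transformation and the bilinear estimates in the Bourgain--Besov spaces $X^{s,b,q}_{\epsilon,T}$ produce constants independent of $\epsilon$, the Step~1 bound being essential to absorb the contributions from frequencies $|\xi|\gtrsim 1/\sqrt{\epsilon}$.

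The main obstacle is Step~1: one is asking a sign-indefinite Hamiltonian to become coercive after cubic/quartic correction, which can only succeed under a rigid algebraic constraint on $(a,b,c,d)$, and $3ac/(4d)=b$ should be exactly that constraint. A secondary but nontrivial obstacle is tracking $\epsilon$-independent constants throughout the machinery of Theorem \ref{theo1}, where the dispersion symbol $b|\xi|\xi + a\epsilon\xi^3$ degenerates to $b|\xi|\xi$ as $\epsilon \to 0$ and the conormal weight defining the Bourgain spaces changes in a singular way across the transition frequency $|\xi|\sim 1/\sqrt{\epsilon}$.
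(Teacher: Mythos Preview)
Your Step~1 has a structural gap. The quadratic part of any combination $\lambda M + \mu H$ is $\lambda\|v\|_{L^2}^2 + \mu a\epsilon\|\partial_x v\|_{L^2}^2 - \mu b\|v\|_{\dot H^{1/2}}^2$, and no choice of $(\lambda,\mu)$ can make this dominate $\|v\|_{H^{1/2}}^2 + \epsilon\|\partial_x v\|_{L^2}^2$: taking $\mu>0$ gives the $\dot H^{1/2}$ term the wrong sign, while $\mu<0$ gives the $\dot H^1$ term the wrong sign. Cubic or quartic correctors are of lower order for small data and cannot repair the sign of the leading quadratic form, so the coercivity you postulate is simply unavailable. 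The constraint $3ac/(4d)=b$ plays no role at the energy level; in the paper it is exactly the identity $\alpha_1=0$ in the gauge-transformed equation \eqref{eqw} (see Remark~\ref{condition}), whose effect is to delete the term $\partial_x P_{+hi}(e^{iF}v^2)$. That term behaves like the BO nonlinearity $\partial_x(v^2)$ and is the single obstruction to estimating the gauge equation uniformly in~$\epsilon$.

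The paper's route is therefore the task you demote to a ``secondary obstacle'': with $\alpha_1=0$, Section~6 reworks every linear and bilinear estimate from Sections~2--4, splitting space frequencies according to whether $|\xi|$ falls in the degenerate window $\sim 1/\epsilon$ (not $1/\sqrt\epsilon$; this is where the second derivative of the phase $b|\xi|\xi-a\epsilon\xi^3$ vanishes) or outside it, and showing that all constants can be taken independent of~$\epsilon$. This yields uniform-in-$\epsilon$ local well-posedness together with a Lipschitz bound on $S_\epsilon$ for data sharing the same low-frequency part (Proposition~\ref{theo3}). After that, the Bona--Smith splitting of your Step~2 is essentially what the paper executes. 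So your Step~2 outline is correct in spirit, but the uniform control it relies on must come from the gauge/bilinear machinery under the cancellation $\alpha_1=0$, not from a modified energy.
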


In the case where $\frac{\rho}{\rho_1}=\sqrt{3}$, the spatial primitive chosen to perform the gauge transformation for equation \eqref{hoBO} corresponds to the one chosen for the Benjamin-Ono equation. Then, we can show that the Cauchy problem associated to \eqref{hoBO} is uniformly in $\epsilon$ well-posed in $H^1(\mathbb R)$, which will in a classical way (see for example \cite{GW}) lead to Theorem \ref{theo2}. The main difficulty here arises from the fact that the dispersive linear terms $\epsilon \partial_x^3$ and $\mathcal{H}\partial_x^2$ compete together as in the Benjamin equation (see the introduction in \cite{ABR}). Therefore, we are only allowed to use the dispersive smoothing effects associated to \eqref{hoBO} in some well behaved regions in spatial frequency and we need to refine the bilinear estimates obtained in the proof of Theorem \ref{theo1} in the other regions. 

It would be interesting to derive a class of higher-order equation for internal long waves from the first order one derived by Bona, Lannes and Saut in \cite{BLS}.  Among those equations, which would be formally equivalent to \eqref{hoBO},  one might find some with better behaved linear parts, which would avoid to deal with those technical difficulties. 

Finally, we observe that the techniques introduced here would likely lead to similar results for the following intermediate long wave equation 
\begin{equation}\label{ho.ILW}
\partial_tu-b\mathcal{F}_h\partial^2_xu+(a_1\mathcal{F}_h^2+a_2)\epsilon \partial_x^3u
=cu\partial_xu-d\epsilon \partial_x(u\mathcal{F}_h\partial_xu+\mathcal{F}_h(u\partial_xu))
\end{equation}
where $\mathcal{F}_h$ is the Fourier multiplier $-i\coth(h\xi)$, $u$
is a real-valued solution, and $a_1, \ a_2, \ b, \ c, \ d$ and $h$
are positive constants, and which was also derived in \cite{CGK}. Note that the same ill-posedness results as for equation \eqref{hoBO} also hold for this equation (see \cite{Pi}). 

The paper is organized as follows: in the next section, we introduce the notations, define the functions spaces and recall some classical estimates. Sections 3 and 4 are devoted the key nonlinear estimates, which are used in Section 5 to prove Theorem \ref{theo1}. Finally, in Section 6, we prove Theorem \ref{theo2}.

\section{Notations, function spaces and preliminary estimates} \label{not}

\subsection{Notation}
For any positive numbers $a$ and $b$, the notation $a \lesssim b$
means that there exists a positive constant $c$ such that $a \le c
b$. We also denote $a \sim b$ when $a \lesssim b$ and $b \lesssim
a$. Moreover, if $\alpha \in \mathbb R$, $\alpha_+$, respectively
$\alpha_-$, will denote a number slightly greater, respectively
lesser, than $\alpha$.

For $u=u(x,t) \in \mathcal{S}(\mathbb R^2)$,
$\mathcal{F}u=\widehat{u}$ will denote its space-time Fourier
transform, whereas $\mathcal{F}_xu=(u)^{\wedge_x}$, respectively
$\mathcal{F}_tu=(u)^{\wedge_t}$, will denote its Fourier transform
in space, respectively in time. For $s \in \mathbb R$, we define the
Bessel and Riesz potentials of order $-s$, $J^s_x$ and $D_x^s$, by
\begin{displaymath}
J^s_xu=\mathcal{F}^{-1}_x\big((1+|\xi|^2)^{\frac{s}{2}}
\mathcal{F}_xu\big) \quad \text{and} \quad
D^s_xu=\mathcal{F}^{-1}_x\big(|\xi|^s \mathcal{F}_xu\big).
\end{displaymath}

Throughout the paper, we fix a smooth cutoff function $\eta$ such that
\begin{displaymath}
\eta \in C_0^{\infty}(\mathbb R), \quad 0 \le \eta \le 1, \quad
\eta_{|_{[-1,1]}}=1 \quad \mbox{and} \quad  \mbox{supp}(\eta)
\subset [-2,2].
\end{displaymath}
Then if $A$ is a positive number, $P_{\lesssim A}$ denote the Fourier multiplier whose symbol is given by $\eta(\frac{\cdot}{cA})$ and $P_{\gtrsim A}$ is defined by $P_{\gtrsim A}=1-P_{\lesssim A}$.
For $l \in \mathbb Z_{+}$, we define
\begin{displaymath}
\phi(\xi):=\eta(\xi)-\eta(2\xi), \quad
\phi_{2^l}(\xi):=\phi(2^{-l}\xi), \end{displaymath} and
\begin{displaymath}
\psi_{2^{l}}(\xi,\tau)=\phi_{2^{l}}(\tau-b|\xi|\xi+a\epsilon\xi^3).
\end{displaymath}
By convention, we also denote
\begin{displaymath}
\phi_0(\xi):=\eta(2\xi), \quad \text{and} \quad
\psi_{0}(\xi):=(\xi,\tau)=\phi_{0}(2(\tau-b|\xi|\xi+a\epsilon\xi^3)),
\end{displaymath}
Any summations over capitalized variables such as $N, \, L$, $K$ or
$M$ are presumed to be dyadic with $N, \, L$, $K$ or $M \ge 0$,
\textit{i.e.}, these variables range over numbers of the form $\{2^n
: n \in \mathbb Z_{+}\} \cup \{0\}$. Then, we have that
\begin{displaymath}
\sum_{N}\phi_N(\xi)=1, \quad \mbox{supp} \, (\phi_N) \subset
\{\frac{N}{2}\le |\xi| \le 2N\}, \ N \ge 1, \quad \text{and} \quad
\mbox{supp} \, (\phi_0) \subset \{|\xi| \le 1\}.
\end{displaymath}
Let us define the Littlewood-Paley multipliers by
\begin{displaymath}
P_Nu=\mathcal{F}^{-1}_x\big(\phi_N\mathcal{F}_xu\big), \quad
Q_Lu=\mathcal{F}^{-1}\big(\psi_L\mathcal{F}u\big),
\end{displaymath}
and $P_{\ge N}:=\sum_{K \ge N} P_{K}$. Moreover, we also define the
operators $P_{hi}$, $P_{HI}$, $P_{lo}$ and $P_{LO}$ by
\begin{displaymath}
P_{hi}=\sum_{N\ge 2 } P_N, \quad  P_{HI}=\sum_{N \ge 2^4}P_N, \quad
P_{lo}= 1-P_{hi}, \quad \text{and} \quad P_{LO}= 1-P_{HI}.
\end{displaymath}

Let $P_+$ and $P_-$ denote the projection on respectively the
positive and the negative Fourier frequencies. Then
\begin{displaymath}
P_{\pm}u=\mathcal{F}^{-1}_x\big(\chi_{\mathbb
R_{\pm}}\mathcal{F}_xu\big),
\end{displaymath}
and we also denote $P_{\pm hi}=P_{\pm}P_{hi}$, $P_{\pm
HI}=P_{\pm}P_{HI}$, $P_{\pm lo}=P_{\pm}P_{lo}$, $P_{\pm
LO}=P_{\pm}P_{LO}$ and $P_{\pm N}=P_{\pm}P_{N}$. Observe that $P_{hi}$, $P_{HI}$, $P_{lo}$,
$P_{LO}$, $P_N$ and $P_{\pm N}$ are bounded (uniformly in $N$) operators on $L^p(\mathbb R)$ for $1\le
p\le\infty$, while $P_{\pm}$ are only bounded on $L^p(\mathbb R)$
for $1 < p < \infty$. We also note that
\begin{displaymath}
\mathcal{H}=-iP_++iP_-.
\end{displaymath}

Finally, we denote by
$V_{\epsilon}(t)=e^{t(b\mathcal{H}\partial_x^2+a\epsilon
\partial_x^3)}$ the free group associated with the linearized part
of equation \eqref{hoBO}, which is to say,
\begin{equation} \label{V}
\mathcal{F}_x\big(V_{\epsilon}(t)f
\big)(\xi)=e^{it(b|\xi|\xi-a\epsilon\xi^3)}\mathcal{F}_xf(\xi).
\end{equation}

\subsection{Function spaces} \label{spaces}
For $1 \le p \le \infty$, $L^p(\mathbb R)$ is the usual Lebesgue
space with the norm $\|\cdot\|_{L^p}$, and for $s \in \mathbb R$ ,
the real-valued Sobolev spaces $H^s(\mathbb R)$ and $W^{s,p}(\mathbb
R)$ denote the spaces of all real-valued functions with the usual
norms
\begin{displaymath}
\|\phi\|_{H^s}=\|J^s_x\phi\|_{L^2} \quad \text{and} \quad
\|\phi\|_{W^{s,p}}=\|J^s_x \phi\|_{L^p}.
\end{displaymath}
If $f=f(x,t)$ is a function defined for $x \in
\mathbb R$ and $t$ in the time interval $[0,T]$, with $T>0$, if $B$
is one of the spaces defined above, $1 \le p \le \infty$ and $1 \le q \le \infty$, we will
define the mixed space-time spaces $L^p_TB_x$, 
$L^p_tB_x$, $L^q_xL^p_T$ by the norms
\begin{displaymath}
\|f\|_{L^p_TB_x} =\Big(
\int_0^T\|f(\cdot,t)\|_{B}^pdt\Big)^{\frac1p} \quad 
\|f\|_{L^p_tB_x} =\Big( \int_{\mathbb R}\|f(\cdot,t)\|_{B}^pdt\Big)^{\frac1p}, 
\end{displaymath}
and 
\begin{displaymath} 
\|f\|_{L^q_xL^p_T}= \left(\int_{\mathbb R}\Big( \int_0^T|f(x,t)|^pdt\Big)^{\frac{q}{p}}dx\right)^{\frac1q}.
\end{displaymath}
Moreover, if $s \in \mathbb R$, $1 \le q \le \infty$ and $X$ denotes one of the mixed space-time spaces defined above, we define its dyadic version $\mathcal{B}^{s,q}(X)$ as 
\begin{displaymath} 
\|f\|_{\mathcal{B}^{s,q}(X)}=\left( \sum_{N}\langle N \rangle^{sq}\|P_Nf\|_{X}^q\right)^{\frac1q}.
\end{displaymath}
In the special case $ (s,q)=(0,2)$, the space $ \mathcal{B}^{s,q}(X)$ will be simply denoted by $ \widetilde{X} $. 

For $s$, $b \in \mathbb R$, we introduce the Bourgain spaces
$X^{s,b}_{\epsilon}$ related to the linear part of \eqref{hoBO} as
the completion of the Schwartz space $\mathcal{S}(\mathbb R^2)$
under the norm
\begin{equation} \label{Bourgain}
\|v\|_{X^{s,b}_{\epsilon}} := \left(
\int_{\mathbb{R}^2}\langle\tau-b|\xi|\xi+a\epsilon
\xi^3\rangle^{2b}\langle \xi\rangle^{2s}|\widehat{v}(\xi, \tau)|^2
d\xi d\tau \right)^{\frac12},
\end{equation}
where $\langle x\rangle:=1+|x|$. We will also use a dyadic version
of those spaces introduced in \cite{Tat} in the context of wave maps. 
For $s$, $b \in \mathbb R$, $1 \le q \le
\infty$, $X^{s,b,q}_{\epsilon}$ will denote the completion of the
Schwartz space $\mathcal{S}(\mathbb R^2)$ under the norm
\begin{equation} \label{Bourgain2}
\|v\|_{X^{s,b,q}_{\epsilon}} := \left( \sum_N\Big(\sum_L\langle N
\rangle^{sq}\langle
L\rangle^{bq}\|P_NQ_Lv\|_{L^2_{x,t}}^q\Big)^{\frac2q}
\right)^{\frac12}.
\end{equation}
Moreover, we define a localized (in time) version of these spaces.
Let $T>0$ be a positive time and $Y=X^{s,b}_{\epsilon}$ or
$Y=X^{s,b,q}_{\epsilon}$. Then, if $v: \mathbb R \times
[0,T]\rightarrow \mathbb C$, we have that
\begin{displaymath}
\|v\|_{Y_{T}}:=\inf \{\|\tilde{v}\|_{Y} \ | \ \tilde{v}: \mathbb R
\times \mathbb R \rightarrow \mathbb C, \ \tilde{v}|_{\mathbb R
\times [0,T]} = v\}.
\end{displaymath}
When $\epsilon=1$, we will denote $X^{s,b}=X^{s,b}_{1}$,
$X^{s,b}_{T}=X^{s,b}_{1,T}$, $X^{s,b,q}=X^{s,b,q}_{1}$ and
$X^{s,b,q}_{T}=X^{s,b,q}_{1,T}$.

Finally we list some useful properties of the  Bourgain spaces
defined above.
\begin{proposition} \label{propBourgain}
Fix $\delta>0$, $s \in \mathbb R$ and $\epsilon>0$. Then it holds
that
\begin{equation} \label{propBourgain1}
\|v\|_{X^{s,\frac12}_{\epsilon}} \lesssim
\|v\|_{X^{s,\frac12,1}_{\epsilon}} \lesssim
\|v\|_{X^{s,\frac12+\delta}_{\epsilon}},
\end{equation}
\begin{equation} \label{propBourgain2}
\|v\|_{L^{\infty}_tH^s_x} \lesssim
\|\widehat{J^s_xv}\|_{L^2_{\xi}L^1_{\tau}} \lesssim
\|v\|_{X^{s,\frac12,1}_{\epsilon}},
\end{equation}
and
\begin{equation} \label{propBourgain1b}
\|f\|_{X^{s,-\frac12+\delta}_{\epsilon}} \lesssim
\|f\|_{L^{1+\delta'}_tH^s_x},
\end{equation}
for $\delta'>0$ satisfying $1+\delta'=\frac1{1-\delta}$. In other
words, the injections
\begin{displaymath} X^{s,\frac12+\delta}_{\epsilon}
\hookrightarrow X^{s,\frac12,1}_{\epsilon} \hookrightarrow
X^{s,\frac12}_{\epsilon}, \quad  X^{s,\frac12,1}_{\epsilon}
\hookrightarrow L^{\infty}_tH^s_x,
\end{displaymath}
and
\begin{displaymath}
L^{1+\delta'}_tH^s_x \hookrightarrow
X^{s,-\frac12+\delta}_{\epsilon}
\end{displaymath}
are continuous.
\end{proposition}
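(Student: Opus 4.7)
The plan is to treat the three groups of estimates separately. All of them are standard embeddings for Bourgain-type spaces and rely only on Plancherel, Cauchy--Schwarz, almost-orthogonality of Littlewood--Paley pieces in $\xi$, and one-dimensional Sobolev embedding in time. The specific dispersion symbol $\tau-b|\xi|\xi+a\epsilon\xi^3$ plays no role beyond providing the modulation weight, so the estimates will be uniform in $\epsilon>0$.

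For \eqref{propBourgain1}, I would reduce both inequalities to sequence-space manipulations. By Plancherel and Littlewood--Paley orthogonality one has $\|v\|^2_{X^{s,1/2}_\epsilon}=\sum_{N,L}\langle N\rangle^{2s}\langle L\rangle\|P_NQ_Lv\|_{L^2}^2$. Setting $a_L:=\langle L\rangle^{1/2}\|P_NQ_Lv\|_{L^2}$, the left inequality is the elementary bound $\sum_L a_L^2\le\bigl(\sum_L a_L\bigr)^2$ for each $N$, summed in $N$; the right inequality is Cauchy--Schwarz in $L$ with the summable weight $\sum_L\langle L\rangle^{-2\delta}<\infty$.

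For \eqref{propBourgain2}, the first inequality follows from Plancherel in $x$ and the pointwise bound $|\mathcal{F}_xv(\xi,t)|=\bigl|\int e^{it\tau}\widehat v(\xi,\tau)\,d\tau\bigr|\le \|\widehat v(\xi,\cdot)\|_{L^1_\tau}$, applied with the weight $\langle\xi\rangle^s$ inside the $L^2_\xi$ norm. For the second, I would decompose $v=\sum_{N,L}P_NQ_Lv$; almost orthogonality of the $P_Nv$ in $L^2_\xi L^1_\tau$ (finitely overlapping $\xi$-supports) gives $\|\widehat{J^s_xv}\|_{L^2_\xi L^1_\tau}^2\lesssim\sum_N\langle N\rangle^{2s}\|\widehat{P_Nv}\|_{L^2_\xi L^1_\tau}^2$, and for each dyadic piece Cauchy--Schwarz in $\tau$ exploits that for every fixed $\xi$ the set $\{\tau:|\tau-b|\xi|\xi+a\epsilon\xi^3|\sim L\}$ has measure $\lesssim L$, producing exactly the weight $\langle L\rangle^{1/2}$ before Plancherel in $(\xi,\tau)$ returns $\|P_NQ_Lv\|_{L^2_{x,t}}$.

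For \eqref{propBourgain1b} I would argue by duality. Since $X^{s,-1/2+\delta}_\epsilon$ is the dual of $X^{-s,1/2-\delta}_\epsilon$ via the $L^2_{x,t}$ pairing, H\"older in $t$ gives
\[
\Big|\int f\bar g\,dx\,dt\Big|\le\|f\|_{L^{1+\delta'}_tH^s_x}\,\|g\|_{L^{1/\delta}_tH^{-s}_x},
\]
the relation $1+\delta'=1/(1-\delta)$ being precisely the one that makes $1/\delta$ the conjugate of $1+\delta'$. It then suffices to prove the embedding $X^{0,1/2-\delta}_\epsilon\hookrightarrow L^{1/\delta}_tL^2_x$. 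For this, the map $h\mapsto u:=V_\epsilon(-\cdot)h$ is an isometry from $X^{0,1/2-\delta}_\epsilon$ onto $L^2_xH^{1/2-\delta}_t$, thanks to the unitarity of $V_\epsilon(t)$ on $L^2_x$, so Minkowski's inequality (applicable since $1/\delta\ge 2$ when $\delta\le 1/2$) combined with the scalar Sobolev embedding $H^{1/2-\delta}(\mathbb R)\hookrightarrow L^{1/\delta}(\mathbb R)$ gives
\[
\|h\|_{L^{1/\delta}_tL^2_x}=\|u\|_{L^{1/\delta}_tL^2_x}\le\|u\|_{L^2_xL^{1/\delta}_t}\lesssim\|u\|_{L^2_xH^{1/2-\delta}_t}=\|h\|_{X^{0,1/2-\delta}_\epsilon}.
\]
I expect \eqref{propBourgain1b} to be the only step that needs real care: the usual transfer principle for $X^{s,b}$ estimates requires $b>1/2$, whereas here the conormal index is $1/2-\delta<1/2$, which is precisely why the argument must go through the Minkowski-plus-Sobolev detour above instead of appealing to a free-solution inequality.
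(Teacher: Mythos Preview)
Your argument is correct in all three parts. The paper itself does not supply a proof of this proposition; it is stated as a list of ``useful properties of the Bourgain spaces'' and treated as standard (with an implicit reference to \cite{Tat} for the Besov refinement $X^{s,b,q}_{\epsilon}$). Your write-up fills this gap cleanly: the sequence-space manipulations for \eqref{propBourgain1}, the $L^2_\xi L^1_\tau$ argument for \eqref{propBourgain2}, and the duality plus critical one-dimensional Sobolev embedding $H^{1/2-\delta}(\R)\hookrightarrow L^{1/\delta}(\R)$ for \eqref{propBourgain1b} are exactly the standard proofs. The only point worth flagging is that the last embedding is the \emph{endpoint} Sobolev inequality (equality in the scaling relation $1/p=1/2-s$), which does hold in one dimension for $p<\infty$; you implicitly use this, and it is fine.
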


\subsection{Linear estimates}
First, we recall some linear estimates in Bourgain's spaces which
will be needed later (see for instance \cite{Tat}).

\begin{lemma}[Homogeneous linear estimate] \label{prop1.1}
Let $s \in \mathbb R$ and $\epsilon>0$. Then
\begin{equation} \label{prop1.1.2}
\|\eta(t)V_{\epsilon}(t)\phi\|_{X^{s,\frac12,1}_{\epsilon}}
\lesssim\|\phi\|_{H^s}.
\end{equation}
\end{lemma}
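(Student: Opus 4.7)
The plan is to reduce the estimate to a direct computation on the Fourier side, exploiting that $V_{\epsilon}(t)\phi$ has space-time spectrum concentrated on the characteristic surface $\tau=b|\xi|\xi-a\epsilon\xi^3$. Multiplying by the time cutoff $\eta(t)$ convolves in $\tau$ by $\widehat{\eta}$, which is a Schwartz function, so by \eqref{V} one has the explicit identity
\begin{displaymath}
\mathcal{F}\bigl(\eta(t)V_{\epsilon}(t)\phi\bigr)(\xi,\tau)=\widehat{\eta}\bigl(\tau-b|\xi|\xi+a\epsilon\xi^3\bigr)\,\widehat{\phi}(\xi).
\end{displaymath}
This identity is the only analytic input from the equation; everything else is bookkeeping inside the definition \eqref{Bourgain2}.

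Next I would freeze dyadic parameters $N$ and $L$, insert the identity above into $\|P_NQ_L(\eta V_{\epsilon}\phi)\|_{L^2_{x,t}}$, and change variables to $\tau'=\tau-b|\xi|\xi+a\epsilon\xi^3$ at fixed $\xi$. Since $\psi_L(\xi,\tau)=\phi_L(\tau')$, Fubini decouples the spatial and modulation integrals and produces the clean factorization
\begin{displaymath}
\|P_NQ_L(\eta V_{\epsilon}\phi)\|_{L^2_{x,t}}=\|P_N\phi\|_{L^2_x}\,\|\phi_L\widehat{\eta}\|_{L^2(d\tau')}.
\end{displaymath}
This reduces the problem to estimating the one-dimensional modulation integral $\|\phi_L\widehat{\eta}\|_{L^2(d\tau')}$, which has nothing to do with $\phi$.

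To finish, I would use the Schwartz decay of $\widehat{\eta}$. On $\mathrm{supp}\,\phi_L$ one has $|\tau'|\sim\langle L\rangle$, so $|\widehat{\eta}(\tau')|\lesssim\langle L\rangle^{-K}$ for any $K\geq 0$, giving $\|\phi_L\widehat{\eta}\|_{L^2(d\tau')}\lesssim\langle L\rangle^{1/2-K}$. Choosing $K$ sufficiently large makes $\sum_L\langle L\rangle^{1/2}\|\phi_L\widehat{\eta}\|_{L^2(d\tau')}$ a finite constant independent of $N$ and $\phi$. Plugging this into \eqref{Bourgain2} with $b=1/2$ and $q=1$ and performing the $\ell^2$-sum over $N$ weighted by $\langle N\rangle^s$ yields
\begin{displaymath}
\|\eta(t)V_{\epsilon}(t)\phi\|_{X^{s,\frac12,1}_{\epsilon}}\lesssim\left(\sum_N\langle N\rangle^{2s}\|P_N\phi\|_{L^2_x}^2\right)^{1/2}\lesssim\|\phi\|_{H^s},
\end{displaymath}
which is \eqref{prop1.1.2}. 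There is no genuine obstacle here: the argument is purely kinematic and depends only on $V_{\epsilon}$ being a unitary group with smooth phase and $\eta$ being a fixed Schwartz cutoff, the latter providing precisely the rapid decay in $L$ needed to absorb the $\ell^1_L$ summation built into the $X^{s,\frac12,1}_{\epsilon}$ norm.
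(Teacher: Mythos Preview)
Your proof is correct. The paper does not actually give its own proof of this lemma; it simply states the estimate and refers the reader to \cite{Tat} (``see for instance \cite{Tat}''), so there is nothing in the paper to compare against. Your argument --- computing the space-time Fourier transform explicitly, factoring the $P_NQ_L$ block into $\|P_N\phi\|_{L^2_x}\|\phi_L\widehat{\eta}\|_{L^2_{\tau'}}$, and summing in $L$ using the Schwartz decay of $\widehat{\eta}$ --- is exactly the standard proof of such homogeneous linear estimates in $\ell^1$-Besov Bourgain spaces, and it is presumably what the cited reference contains.
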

\begin{lemma}[Non-homogeneous linear estimate] \label{prop1.2}
Let $s \in \mathbb R$ and $\epsilon>0$. Then, it holds that
\begin{equation} \label{prop1.2.1}
\big\|\eta(t)\int_0^tV_{\epsilon}(t-t')g(t')dt'\big\|_{X^{s,\frac12,1}_{\epsilon}}
\lesssim  \|g\|_{X^{s,-\frac12,1}_{\epsilon}}.
\end{equation}
\end{lemma}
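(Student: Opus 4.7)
The proof follows the standard approach for Bourgain-type non-homogeneous estimates (see \cite{Tat}) and I would break it into three moves. First, since $J^s_x$ commutes with $V_\epsilon(t)$ and with the Littlewood-Paley projectors $P_N$, $Q_L$, the weight $\langle \xi\rangle^s$ factors through both sides and I may reduce to $s=0$. Second, the identity
$$\int_0^t V_\epsilon(t-t')g(t')\,dt' = V_\epsilon(t)\int_0^t h(t')\,dt', \qquad h(t):= V_\epsilon(-t)g(t),$$
together with the fact that $V_\epsilon$ is an isometry on every space in sight and transforms the weight $\langle\tau - b|\xi|\xi+a\epsilon\xi^3\rangle$ into the purely temporal weight $\langle\tau\rangle$, reduces the estimate to a statement about the scalar operator $h \mapsto \eta(t)\int_0^t h(t')\,dt'$ acting on functions of $(x,t)$ whose time-frequency is weighted by $\langle\tau\rangle^{\pm 1/2}$ with $\ell^1$-summation over dyadic modulations; the spatial variable is then passive.

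For this reduced estimate I would decompose $h = \sum_M Q_M h$ in temporal modulation blocks and treat each piece separately. Using the identity
$$\chi_{[0,t]}(t')=\tfrac12\bigl(\operatorname{sgn}(t') - \operatorname{sgn}(t'-t)\bigr),$$
together with $\widehat{\operatorname{sgn}}(\tau)=\text{p.v.}\,2/(i\tau)$, the contribution of $Q_M h$ splits into two terms: a convolution-type term whose temporal Fourier transform is essentially $\widehat\eta \ast_\tau \bigl(\widehat{Q_M h}/(i\tau)\bigr)$, and a boundary-data term of the form $\eta(t)V_\epsilon(t)\phi_M$ which is directly controlled by the homogeneous estimate \eqref{prop1.1.2} via $\|\phi_M\|_{L^2_x} \lesssim \langle M\rangle^{-1/2}\|Q_M h\|_{L^2_{x,t}}$.

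For the convolution term, the factor $1/\tau$ yields precisely the gain $\langle M\rangle^{-1}$ on the diagonal contributions where the output modulation $L$ is comparable to $M$, matching the loss $\langle L\rangle^{1/2}$ needed to reach $X^{0,1/2,1}_\epsilon$ on the left with the savings $\langle M\rangle^{-1/2}$ from the right-hand side. The off-diagonal contributions $L\not\sim M$ are harmless because $\widehat\eta$ is Schwartz and therefore provides arbitrary polynomial decay in $|L-M|$. The main technical point, and the step I expect to be the main obstacle, is to carry the $\ell^1$-Besov summation through the convolution with $\widehat\eta$: a crude Cauchy-Schwarz over modulations would cost a logarithm, but a careful application of Young's convolution inequality at the dyadic level preserves the $\ell^1$ structure exactly, so the sum in $M$ recovers $\|g\|_{X^{0,-1/2,1}_\epsilon}$ and closes the argument. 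Everything else is routine once the scalar reduction is in place.
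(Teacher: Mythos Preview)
The paper does not give its own proof of this lemma; it is stated as a known linear estimate with a reference to \cite{Tat}. Your sketch is precisely the standard argument for non-homogeneous estimates in Besov-refined Bourgain spaces---reduction to $s=0$, conjugation by $V_\epsilon$ to a purely temporal operator, the sign-function splitting into a boundary term controlled by \eqref{prop1.1.2} and a convolution term controlled by the $\langle M\rangle^{-1}$ gain from $1/\tau$ together with the Schwartz decay of $\widehat\eta$---and it is correct, including the observation that the near-diagonal structure preserves the $\ell^1$ modulation summation.
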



Next, we derive local and global smoothing effects associated to the group $\{V_{\epsilon}(t)\}$, for the KdV scaling, in the context of Bourgain's spaces. We begin with the Strichartz estimates.
\begin{lemma} \label{strichartz}
For all $0 <\epsilon <1$, $T>0$ and $0 \le \theta \le 1$, we have
that
\begin{equation} \label{strichartz7}
\|v\|_{L^{p_{\theta}}_{x,t}} \lesssim \|v\|_{\widetilde{L^{p_{\theta}}_{x,t}}} \lesssim
\epsilon^{-\frac{\theta}8}\|v\|_{X^{0,\frac{\theta}{2}+}_{\epsilon}},
\end{equation}
and
\begin{equation} \label{strichartz8}
\|v\|_{L^{p_{\theta}}_{x,T}} \lesssim
\epsilon^{-\frac{\theta}8}\|v\|_{X^{0,\frac{\theta}{2}+}_{\epsilon,T}},
\end{equation}
where $\frac{1}{p_{\theta}}=\frac{\theta}8+\frac{1-\theta}2$.
\end{lemma}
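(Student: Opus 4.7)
The first inequality $\|v\|_{L^{p_\theta}_{x,t}} \lesssim \|v\|_{\widetilde{L^{p_\theta}_{x,t}}}$ is a direct consequence of the Littlewood--Paley square function characterization of $L^{p_\theta}$ combined with Minkowski's inequality in $\ell^2_N$; both steps are valid since $p_\theta \in [2,8]$ throughout the range $0 \le \theta \le 1$. The substantive content is therefore the second inequality. By complex interpolation between the trivial endpoint at $\theta = 0$, where $p_0 = 2$ and the bound reduces to Plancherel, and the endpoint $\theta = 1$, it suffices to prove
\[
\|v\|_{\widetilde{L^8_{x,t}}} \lesssim \epsilon^{-1/8}\,\|v\|_{X^{0,\frac12+}_{\epsilon}}.
\]

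Since $\sum_N \|P_N v\|_{X^{0,b}_\epsilon}^2 \sim \|v\|_{X^{0,b}_\epsilon}^2$ by Plancherel and the almost-orthogonality of the Littlewood--Paley blocks, an $\ell^2_N$-summation reduces this to the frequency-localized estimate $\|P_N v\|_{L^8_{x,t}} \lesssim \epsilon^{-1/8}\|P_N v\|_{X^{0,\frac12+}_\epsilon}$, uniformly in $N$. By the standard Bourgain-space transfer principle (writing $v$ as a superposition over the modulation variable $\tau' = \tau - \omega_\epsilon(\xi)$, with $\omega_\epsilon(\xi) = b|\xi|\xi - a\epsilon\xi^3$, and using Cauchy--Schwarz in $\tau'$ together with the integrability of $\langle\tau'\rangle^{-2b}$ when $b > 1/2$), this reduces in turn to the linear Strichartz inequality for the free group,
\[
\|V_\epsilon(t)\phi\|_{L^8_{x,t}} \lesssim \epsilon^{-1/8}\,\|\phi\|_{L^2_x}.
\]

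The linear Strichartz estimate is obtained from the dispersive decay
\[
\|V_\epsilon(t)\phi\|_{L^\infty_x} \lesssim (a\epsilon|t|)^{-1/3}\,\|\phi\|_{L^1_x},
\]
which itself follows from Van der Corput's lemma applied to the oscillatory kernel of $V_\epsilon(t)$: splitting the $\xi$-integration at the origin, the phase $\omega_\epsilon$ is smooth on each half-line with constant third derivative $|\omega_\epsilon'''| = 6a\epsilon$, so the third-derivative version of Van der Corput yields the claimed decay rate uniformly in $x$. Interpolating with the unitary bound on $L^2_x$ and running the classical $TT^*$ / Hardy--Littlewood--Sobolev argument on the admissible pair $(q,r) = (8,8)$, which satisfies the KdV-type admissibility relation $3/q + 1/r = 1/2$, produces the $L^8_{x,t}$ bound with exactly the prefactor $\epsilon^{-1/8}$ — namely, the one-half power of $\epsilon^{-1/4}$ coming from the $TT^*$ operator norm, which itself interpolates between the $L^2$-unitarity and $\epsilon^{-1/3}|t|^{-1/3}$ at $L^1 \to L^\infty$.

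The main technical delicacy lies in the Van der Corput step, because the Benjamin--Ono symbol $|\xi|\xi$ is not $C^3$ at the origin; this is handled by splitting the integral at $\xi = 0$ (the $\mathcal H\partial_x^2$ contribution to $\omega_\epsilon'''$ is supported only at the origin and disappears on each half-line) and, for the unbounded tail $|\xi|\gg 1$, by a routine integration by parts exploiting that $\omega_\epsilon'(\xi) \sim -3a\epsilon\xi^2$ tends to infinity. Once \eqref{strichartz7} is secured, the time-localized bound \eqref{strichartz8} follows at once from the definition of the restriction norm by taking the infimum over extensions of $v$ from $\mathbb R \times [0,T]$ to $\mathbb R \times \mathbb R$.
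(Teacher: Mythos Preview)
Your argument is correct and lands on the same linear Strichartz estimate $\|V_\epsilon(t)\phi\|_{L^8_{x,t}} \lesssim \epsilon^{-1/8}\|\phi\|_{L^2}$, but you reach it by a different route than the paper. The paper does not run Van der Corput on the full phase $b|\xi|\xi - a\epsilon\xi^3$; instead it splits $V_\epsilon(t)=W_\epsilon^+(t)P_+ + W_\epsilon^-(t)P_-$ and then uses the explicit Galilean/gauge change of variables
\[
u(x,t)=e^{\pm i\frac{2b^3}{27a^2\epsilon^2}t}\,e^{\mp i\frac{b}{3a\epsilon}x}\,w\!\left(x-\tfrac{b^2t}{3a\epsilon},\,t\right)
\]
to convert each half into the pure Airy group $U_\epsilon(t)$, for which the $L^8$ Strichartz estimate is quoted from the KdV literature. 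This transformation buys the paper a clean reduction without having to discuss the singularity of $|\xi|\xi$ at the origin or the large-$|\xi|$ tail. Your direct approach via the third-derivative Van der Corput bound $|\omega_\epsilon'''|=6a\epsilon$ on each half-line, supplemented by nonstationary phase on the tail, is equally valid and more self-contained; it just requires the care you took in splitting at $\xi=0$. The remaining steps (transfer principle to $X^{0,\frac12+}_\epsilon$, interpolation with Plancherel, and $\ell^2_N$-summation to pass to the tilde norm) match the paper's proof up to harmless reordering: the paper interpolates first (via Stein's theorem) and then localizes dyadically, whereas you localize at $\theta=1$ and then interpolate the $\ell^2_N$-valued spaces.
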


\begin{proof}
First, we observe, arguing as in Lemma 2.1 in \cite{LPP}, that $w$
is a solution to the linear equation
\begin{equation} \label{strichartz1}
\partial_tw-a\epsilon\partial_x^3w\pm ib\partial_x^2w=0,
\end{equation}
if and only if
\begin{equation} \label{strichartz2}
u(x,t)=e^{\pm i\frac{2b^3}{27a^2\epsilon^2}t}e^{\mp i
\frac{b}{3a\epsilon}x}w(x-\frac{b^2t}{3a\epsilon},t)
\end{equation}
is a solution to
\begin{equation} \label{strichartz3}
\partial_tu-a\epsilon\partial_x^3u=0.
\end{equation}

Let us denote by $\{W_{\epsilon}^{\pm}(t)\}$ and
$\{U_{\epsilon}(t)\}$ the groups associated to \eqref{strichartz1}
and \eqref{strichartz3}. Since $U_{\epsilon}(t)=U_1(\epsilon t)$, we
deduce from the classical Strichartz estimate for the KdV equation
(cf. for example \cite{LP}, chapter 4) that
\begin{equation} \label{strichartz4}
\|U_{\epsilon}(t)\phi\|_{L^8_{x,t}} \lesssim
\epsilon^{-\frac18}\|\phi\|_{L^2}.
\end{equation}
Then, it follows gathering \eqref{strichartz1}--\eqref{strichartz4}
with the identity
\begin{equation} \label{strichartz10}
V_{\epsilon}(t)=W^+_{\epsilon}(t)P_++W^-_{\epsilon}(t)P_-,
\end{equation}
that
\begin{equation} \label{strichartz5}
\|V_{\epsilon}(t)\phi\|_{L^8_{x,t}} \lesssim
\epsilon^{-\frac18}\|\phi\|_{L^2}.
\end{equation}

Next, we use Lemma 3.3 in \cite{Gi} to rewrite estimate
\eqref{strichartz5} in the context of Bourgain's spaces. We get that
\begin{equation} \label{strichartz6}
\|v\|_{L^8_{x,t}} \lesssim \epsilon^{-\frac18}
\|v\|_{X^{0,\frac12+}}.
\end{equation}
Therefore, we deduce by using Stein's theorem to interpolate
estimate \eqref{strichartz6} with Plancherel's identity
$\|v\|_{L^2_{x,t}}=\|v\|_{X^{0,0}_{\epsilon}}$, that
\begin{equation} \label{strichartz9}
\|v\|_{L^{p_{\theta}}_{x,t}} \lesssim
\epsilon^{-\frac{\theta}8}\|v\|_{X^{0,\frac{\theta}{2}+}_{\epsilon}}.
\end{equation}

Finally, estimate \eqref{strichartz7} follows directly by applying estimate \eqref{strichartz9} to each dyadic block of $\|v\|_{\widetilde{L^{p_{\theta}}_{x,t}}}$.
\end{proof} 

Next, we turn to the local Kato type smoothing effect.  
\begin{lemma} \label{smoothing} 
Let $0 < \epsilon \le 1$ and $T>0$ and $N \gtrsim \frac1{\epsilon}$. Then, it holds that 
\begin{equation} \label{smoothing1}
\|\partial_xP_Nv\|_{L^{\infty}_xL^2_t} \lesssim
\epsilon^{-\frac12}\|P_Nv\|_{X^{0,\frac12,1}_{\epsilon}},
\end{equation}
and 
\begin{equation} \label{smoothing4}
\|\partial_xv\|_{\widetilde{L^{\infty}_xL^2_T}} \lesssim T^{\frac12}\epsilon^{-{\frac32+}}\|v\|_{X^{0,\frac12,1}_{\epsilon}}.
\end{equation}
\end{lemma}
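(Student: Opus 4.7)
I plan to combine the classical Kato smoothing effect for KdV with the change of variables~\eqref{strichartz2} already used in Lemma~\ref{strichartz}, and then transfer the resulting group-level bound to Bourgain spaces. The classical Kato estimate $\|\partial_x U_1(t)\phi\|_{L^\infty_x L^2_t}\lesssim \|\phi\|_{L^2}$ together with $U_\epsilon(t)=U_1(\epsilon t)$ and the change of variable $s=\epsilon t$ gives $\|\partial_x U_\epsilon(t)\phi\|_{L^\infty_x L^2_t}\lesssim \epsilon^{-1/2}\|\phi\|_{L^2}$. Writing $w=W_\epsilon^{\pm}(t)\phi$ in terms of the KdV solution $u$ through~\eqref{strichartz2} and differentiating, one obtains, after absorbing the spatial translation in the $L^\infty_x$-norm,
\[ \|\partial_x w\|_{L^\infty_x L^2_t}\le \|\partial_x u\|_{L^\infty_x L^2_t}+\tfrac{b}{3a\epsilon}\,\|u\|_{L^\infty_x L^2_t}. \]
The restriction $N\gtrsim 1/\epsilon$ is essential here: for such $N$ the modulation $e^{\mp ibx/(3a\epsilon)}$ shifts frequencies by only $O(N)$, so the associated $u$ remains frequency-localized at $|\xi|\sim N$, and Bernstein for frequency-localized functions gives $\|u\|_{L^\infty_x L^2_t}\lesssim N^{-1}\|\partial_x u\|_{L^\infty_x L^2_t}\lesssim\epsilon\|\partial_x u\|_{L^\infty_x L^2_t}$, which absorbs the lower-order term. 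Combined with~\eqref{strichartz10}, this yields
\[ \|\partial_x V_\epsilon(t)P_N\phi\|_{L^\infty_x L^2_t}\lesssim \epsilon^{-1/2}\|P_N\phi\|_{L^2},\qquad N\gtrsim 1/\epsilon. \]

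To pass from this group estimate to~\eqref{smoothing1}, I use the Ginibre-type transference already invoked in Lemma~\ref{strichartz}. For each dyadic $L$, write
\[ P_N Q_L v(x,t)=\int e^{it\tau'}\phi_L(\tau')\,[V_\epsilon(t)g_{\tau'}](x)\,d\tau', \]
with $\widehat{g_{\tau'}}(\xi)=\phi_N(\xi)\widehat v(\xi,\tau'+b|\xi|\xi-a\epsilon\xi^3)$. Minkowski in $\tau'$, the group bound above, and Cauchy--Schwarz on a $\tau'$-support of length $\sim L$ give $\|\partial_x P_N Q_L v\|_{L^\infty_x L^2_t}\lesssim \epsilon^{-1/2}L^{1/2}\|P_N Q_L v\|_{L^2_{x,t}}$. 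Summing in $L$, the $\ell^1_L$-structure of $X^{0,1/2,1}_\epsilon$ is exactly what avoids any $\delta>0$ loss, and~\eqref{smoothing1} follows.

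For~\eqref{smoothing4} I split dyadically in space frequency. The block $N\gtrsim 1/\epsilon$ is controlled by~\eqref{smoothing1} together with $\|\cdot\|_{L^2_T}\le\|\cdot\|_{L^2_t}$, contributing $\epsilon^{-1/2}\|v\|_{X^{0,1/2,1}_\epsilon}$. For $N\lesssim 1/\epsilon$, I use H\"older in time together with two Bernstein gains in $x$ and the embedding $X^{0,1/2,1}_\epsilon\hookrightarrow L^\infty_t L^2_x$ from~\eqref{propBourgain2}:
\[ \|\partial_x P_N v\|_{L^\infty_x L^2_T}\le T^{1/2}\|\partial_x P_N v\|_{L^\infty_{x,T}}\lesssim T^{1/2}N^{3/2}\|P_N v\|_{X^{0,1/2,1}_\epsilon}. \]
An $\ell^2_N$-summation with $\sup_{N\lesssim 1/\epsilon}N^{3/2}\lesssim\epsilon^{-3/2}$ then bounds the low-frequency contribution by $T^{1/2}\epsilon^{-3/2+}\|v\|_{X^{0,1/2,1}_\epsilon}$, which dominates the high-frequency piece and proves~\eqref{smoothing4}.

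\emph{Main obstacle.} The delicate point is the first step: differentiating the modulation $e^{\mp ibx/(3a\epsilon)}$ from~\eqref{strichartz2} produces a lower-order term of size $1/\epsilon$ that a priori would destroy the $\epsilon^{-1/2}$ Kato gain. The hypothesis $N\gtrsim 1/\epsilon$ in~\eqref{smoothing1} is forced precisely by this phenomenon, since only in that regime does Bernstein on frequency-localized functions provide the compensating factor $N^{-1}\lesssim\epsilon$. The low-frequency regime consequently has to be handled by the cruder Bernstein/Sobolev estimate in the last step, which is what accounts for the worse $\epsilon^{-3/2+}$ loss in~\eqref{smoothing4}.
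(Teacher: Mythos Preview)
Your transfer to Bourgain spaces and your low-frequency treatment for \eqref{smoothing4} are correct and essentially identical to the paper's. The gap is in the very first step, where you derive the group estimate for $V_\epsilon$ via the change of variables~\eqref{strichartz2}. That transformation includes a \emph{Galilean boost} $x\mapsto x-\tfrac{b^2}{3a\epsilon}t$, and the mixed norm $L^\infty_x L^2_t$ is \emph{not} invariant under such time-dependent translations. Concretely, from $|\partial_x w(y,t)|\le |\partial_x u(y+\gamma t,t)|+\beta|u(y+\gamma t,t)|$ you obtain
\[
\|\partial_x w\|_{L^\infty_y L^2_t}\le \sup_y\|\partial_x u(y+\gamma\cdot,\cdot)\|_{L^2_t}+\beta\sup_y\|u(y+\gamma\cdot,\cdot)\|_{L^2_t},
\]
and there is no general reason why $\sup_y\|f(y+\gamma\cdot,\cdot)\|_{L^2_t}\le\|f\|_{L^\infty_x L^2_t}$; this fails for a generic $f$. (Contrast with the Strichartz lemma, where the relevant $L^p_{x,t}$ norm \emph{is} preserved by the area-one shear, and with the maximal-function lemma, where the boost is handled by the $L^2$-boundedness of the Hardy--Littlewood maximal function---a tool unavailable at $L^\infty_x$.) So ``absorbing the spatial translation in the $L^\infty_x$-norm'' is not a valid step here.

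The paper sidesteps this by never reducing to $U_\epsilon$: it applies the general local-smoothing identity (Theorem~4.1 in \cite{KPV3}) directly to the phase $\omega_\epsilon(\xi)=b|\xi|\xi-a\epsilon\xi^3$, obtaining
\[
\|\partial_x V_\epsilon(t)P_N v_0\|_{L^\infty_x L^2_t}^2\lesssim\int_{|\xi|\sim N}\frac{|\xi|^2}{|2b|\xi|-3a\epsilon\xi^2|}\,|\widehat{P_N v_0}(\xi)|^2\,d\xi,
\]
and the hypothesis $N\gtrsim 1/\epsilon$ enters precisely to ensure $|2b|\xi|-3a\epsilon\xi^2|\gtrsim\epsilon\xi^2$, giving the $\epsilon^{-1/2}$ factor. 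If you want to salvage your route, you would have to carry out this same Plancherel-in-$t$ computation for the sheared quantity $\partial_x u(y+\gamma t,t)$ (the phase then becomes $\gamma\xi+a\epsilon\xi^3$, whose derivative $\gamma+3a\epsilon\xi^2$ is indeed $\gtrsim\epsilon\xi^2$); but at that point you are reproving the paper's estimate rather than reducing to the KdV one.
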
  

\begin{proof} Since $N \gtrsim \frac1{\epsilon}$, we obtain applying estimate (4.3) in Theorem 4.1 of \cite{KPV3} that 
\begin{equation} \label{smoothing2}
\begin{split}
\|\partial_xV_{\epsilon}(t)P_Nv_0\|_{L^{\infty}_xL^2_t} &\lesssim 
\Big(\int_{|\xi| \gtrsim \frac1{\epsilon}}\frac{|\xi|^2}{|2b\xi-3a\epsilon\xi^2|}|\big(P_Nv_0\big)^{\wedge}(\xi)|^2d\xi\Big)^{\frac12} \\ & \lesssim \epsilon^{-\frac12}\|P_{N}v_0\|_{L^2_x}.
\end{split}
\end{equation}
Moreover, by applying the Fourier inverse formula, it follows that 
\begin{displaymath} 
\partial_xP_Nv(x,t)=\int_{\mathbb R}\partial_xV_{\epsilon}(t)\big(V_{\epsilon}(-\cdot)P_Nv \big)^{\wedge_t}(x,\tau)e^{it\tau}d\tau.
\end{displaymath}
Therefore, Minkowski's inequality, estimate \eqref{smoothing2}, Plancherel's identity and the Cauchy-Schwarz inequality imply that
\begin{equation} \label{smoothing2b}
\begin{split}
\|\partial_xP_Nv\|_{L^{\infty}_xL^2_t} & \lesssim  \int_{\mathbb R}\big\|\big(V_{\epsilon}(-\cdot)P_Nv \big)^{\wedge}(\cdot,\tau) \big\|_{L^2_{\xi}}d\tau\\ 
& \lesssim \sum_{L} \langle L\rangle^{\frac12}\big\|\phi_L(\tau)\big(V_{\epsilon}(-\cdot)P_Nv \big)^{\wedge}\big\|_{L^2_{\xi,\tau}},
\end{split}
\end{equation}
which leads to estimate \eqref{smoothing1} since 
$$\big(V_{\epsilon}(-\cdot)P_Nv \big)^{\wedge}(\xi,\tau)=\big(P_Nv \big)^{\wedge}(\xi,\tau+b|\xi|\xi-a\epsilon\xi^3).$$

On the other hand, if $N \lesssim \frac1{\epsilon}$, we deduce from the Sobolev embedding $H^s(\mathbb R) \hookrightarrow L^{\infty}(\mathbb R)$, whenever $s>\frac12$, that
\begin{displaymath} 
\begin{split}
\|\partial_xV_{\epsilon}(t)P_Nv_0\|_{L^{\infty}_xL^2_T}
&\lesssim T^{\frac12} \|\partial_xV_{\epsilon}(t)P_Nv_0\|_{L^{\infty}_{x,T}}\\
&\lesssim T^{\frac12} \|\partial_xV_{\epsilon}(t)P_Nv_0\|_{L^{\infty}_TH^s_x}
\\ & \lesssim T^{\frac12}\epsilon^{-1}(1+\epsilon^{-s})\|P_Nv_0\|_{L^2_x}.
\end{split}
\end{displaymath}
Therefore, we deduce arguing as above that 
\begin{equation} \label{smoothing3}
\|\partial_xP_Nv\|_{L^{\infty}_xL^2_T} \lesssim
T^{\frac12}\epsilon^{-1}(1+\epsilon^{-s})\|P_Nv\|_{X^{0,\frac12,1}_{\epsilon,T}},
\end{equation}
whenever $N \lesssim \frac1{\epsilon}$.

Estimate \eqref{smoothing4} follows gathering estimates \eqref{smoothing1} and \eqref{smoothing3} and by squaring and summing over $N$.
\end{proof}

Finally, we derive the maximal function estimate.
\begin{lemma} \label{maximal}
Let $s>\frac34$, $0 <\epsilon \le 1$, and $T>0$ be such that $0 <\epsilon T \le 1$. Then, we have
that
\begin{equation} \label{maximal1}
\|v\|_{\widetilde{L^{2}_xL^{\infty}_T}} \lesssim
\epsilon^{-s}\|v\|_{X^{s,\frac12,1}_{\epsilon,T}}.
\end{equation}
\end{lemma}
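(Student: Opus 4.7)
The plan is to follow the two-step strategy of Lemma~\ref{smoothing}: first establish a linear maximal function estimate for the free group $V_\epsilon$ on each Littlewood--Paley block $P_N$, then transfer it to the Bourgain space by Fourier-inverting in the time variable and applying Cauchy--Schwarz in the modulation.

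For the linear estimate
\begin{equation*}
\|V_\epsilon(t)P_N\phi\|_{L^2_xL^\infty_T}\lesssim \epsilon^{-s}\langle N\rangle^s\|P_N\phi\|_{L^2},
\end{equation*}
I split into two frequency regimes according to where the dispersion relation $\omega(\xi)=b|\xi|\xi-a\epsilon\xi^3$ is KdV-like. When $N\gtrsim 1/\epsilon$, one has $|\omega''(\xi)|\sim\epsilon|\xi|$ on the support of $P_N$, so $V_\epsilon$ behaves like the Airy group $U_\epsilon$. Concretely, the decomposition $V_\epsilon=W^+_\epsilon P_++W^-_\epsilon P_-$ from \eqref{strichartz10} together with the conjugation \eqref{strichartz2} reduces $W^\pm_\epsilon$ to $U_\epsilon$, modulo a modulation $e^{\mp ibx/(3a\epsilon)}$ that shifts Fourier support by $O(1/\epsilon)\lesssim N$ and a constant drift that does not affect the $L^2_xL^\infty_T$ norm. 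One then invokes the Kenig--Ponce--Vega maximal function estimate $\|U_1(t)\phi\|_{L^2_xL^\infty_{[0,1]}}\lesssim\|\phi\|_{H^s}$ for $s>3/4$, rescales via $U_\epsilon(t)=U_1(\epsilon t)$, and uses the hypothesis $\epsilon T\leq 1$ to conclude $\|V_\epsilon P_N\phi\|_{L^2_xL^\infty_T}\lesssim N^s\|P_N\phi\|_{L^2}$, which is absorbed by the target bound since $\epsilon^{-s}\langle N\rangle^s\geq N^s$. When $N\lesssim 1/\epsilon$, the KdV dispersion is lost and I resort to the elementary Sobolev-in-time inequality
\begin{equation*}
\|g\|_{L^\infty_T}^2\lesssim T^{-1}\|g\|_{L^2_T}^2+\|g\|_{L^2_T}\|\partial_tg\|_{L^2_T},
\end{equation*}
applied to $g=V_\epsilon P_N\phi$. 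Using that $V_\epsilon(t)$ is an $L^2_x$-isometry and that $\|\partial_tV_\epsilon P_N\phi\|_{L^2_x}\lesssim N^2\|P_N\phi\|_{L^2}$ in this regime (since $bN^2+a\epsilon N^3\lesssim N^2$), and then Cauchy--Schwarz in $x$, yields $\|V_\epsilon P_N\phi\|_{L^2_xL^\infty_T}^2\lesssim(1+TN^2)\|P_N\phi\|_{L^2}^2$. With $T\leq 1/\epsilon$ and $N\leq 1/\epsilon$, a routine check shows that this is dominated by $\epsilon^{-2s}\langle N\rangle^{2s}\|P_N\phi\|_{L^2}^2$ precisely when $s>3/4$.

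Once the linear estimate is in hand, the transfer to $X^{s,\frac12,1}_{\epsilon,T}$ proceeds as in the proof of Lemma~\ref{smoothing}. Writing
\begin{equation*}
P_Nv(x,t)=\int_{\mathbb R}V_\epsilon(t)\bigl(V_\epsilon(-\cdot)P_Nv\bigr)^{\wedge_t}(\cdot,\tau)\,e^{it\tau}\,d\tau,
\end{equation*}
Minkowski's inequality, Step~1, and a dyadic splitting of the $\tau$-integral followed by Cauchy--Schwarz produce
\begin{equation*}
\|P_Nv\|_{L^2_xL^\infty_T}\lesssim\epsilon^{-s}\langle N\rangle^s\sum_L\langle L\rangle^{1/2}\|P_NQ_Lv\|_{L^2_{x,t}},
\end{equation*}
and squaring and summing over $N$ finishes the proof. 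The main subtlety is the low-frequency regime: the threshold $s>3/4$ emerges from the requirement $N^{1-s}\lesssim\epsilon^{1/2-s}$ at the worst case $N\sim 1/\epsilon$, and this happens to match exactly the Kenig--Ponce--Vega threshold in the KdV regime, so that the two estimates fit together consistently under the scaling assumption $\epsilon T\leq 1$.
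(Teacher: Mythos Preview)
Your argument is correct and shares the paper's core strategy: reduce $V_\epsilon$ to the Airy group via the conjugation \eqref{strichartz2}, invoke the Kenig--Ponce--Vega maximal estimate for $U_1$ after rescaling in time, and transfer to $X^{s,\frac12,1}_{\epsilon,T}$ exactly as in \eqref{smoothing2b}. The difference is that the paper does \emph{not} split into frequency regimes: it applies the conjugation uniformly over all $N$, and the loss $\epsilon^{-s}$ arises in one stroke because the modulation $e^{\mp ibx/(3a\epsilon)}$ shifts Fourier support by $b/(3a\epsilon)$, so that $\langle\xi\pm b/(3a\epsilon)\rangle\lesssim\epsilon^{-1}\langle\xi\rangle$ gives $\|u_0\|_{H^s}\lesssim\epsilon^{-s}\|w_0\|_{H^s}$ for every frequency at once. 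Your separate treatment of $N\lesssim 1/\epsilon$ via the Sobolev-in-time inequality is correct and more elementary in that regime (it bypasses KPV entirely there), but it is not needed---the paper's unified argument already covers low frequencies. One caveat that applies equally to both arguments: the assertion that the Galilean drift $x\mapsto x-\tfrac{b^2t}{3a\epsilon}$ ``does not affect the $L^2_xL^\infty_T$ norm'' (the paper writes it as the equality $\|w\|_{L^2_xL^\infty_T}=\|u\|_{L^2_xL^\infty_T}$) is not literally true for generic functions, since a shear in the $(x,t)$-plane can change this mixed norm; what is true and sufficient is that the KPV maximal estimate is stable under adding a linear transport term to the Airy dispersion, as the van der Corput bounds in its proof depend only on the higher $\xi$-derivatives of the phase.
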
 

\begin{proof} 
The $L^2_x$-maximal function for the KdV group $\{U_1(t)\}$ derived in Theorem 2.7 of \cite{KPV2} implies that 
\begin{equation} \label{maximal2} 
\Big(\int_{\mathbb R}\sup_{|t| \le 1}|U_1(t)u_0(x)|^2dx\Big)^{\frac12} \lesssim \|u_0\|_{H^s},
\end{equation}
if $s > \frac34$. Then, a scaling argument and estimate \eqref{maximal2} yield  
\begin{equation} \label{maximal3}
\begin{split}
\|U_{\epsilon}(t)u_0\|_{L^2_xL^{\infty}_T} &= \|U_1(\epsilon t)u_0\|_{L^2_xL^{\infty}_T} \\
&=\Big(\int_{\mathbb R}\sup_{|s| \le \epsilon T}|U_1(s)u_0(x)|^2dx\Big)^{\frac12}  \lesssim 
\|u_0\|_{H^s}^2,
\end{split}
\end{equation}
since $s>\frac34$ and $0<\epsilon T \le 1$.

Thus, if $w$ and $u$ are the solutions associated to \eqref{strichartz1} and \eqref{strichartz3} with respective initial data $w_0$ and $u_0$, it follows from \eqref{strichartz2} and \eqref{maximal3} that 
\begin{equation} \label{maximal4}
\|w\|_{L^2_xL^{\infty}_T}  = \|u\|_{L^2_xL^{\infty}_T}  \lesssim \|u_0\|_{H^s_x} \lesssim \epsilon^{-s} \|w_0\|_{H^s_x}.
\end{equation}
Therefore, we conclude gathering  \eqref{strichartz10} and \eqref{maximal4} that
\begin{equation} \label{maximal5} 
\|V_{\epsilon}(t)v_0\|_{L^2_xL^{\infty}_T} \lesssim \epsilon^{-s}\|v_0\|_{H^s_x},
\end{equation}
whenever $s>\frac34$ and $\epsilon$, $T$ satisfying $0<\epsilon T\le 1$. 
This implies estimate arguing as in \eqref{smoothing2b} that 
\begin{displaymath} 
\|P_Nv\|_{L^{2}_xL^{\infty}_T} \lesssim
\epsilon^{-s}\langle N \rangle^s\|P_Nv\|_{X^{0,\frac12,1}_{\epsilon,T}}.
\end{displaymath}
for any $N \ge 0$ and $s>\frac34$, which leads to \eqref{maximal1}  by squaring and summing over $N$.
\end{proof}

\subsection{Fractional Leibniz's rules}
First we state the classical fractional Leibniz rule estimate
derived by Kenig, Ponce and Vega (See Theorems A.8 and A.12 in \cite{KPV2}).
\begin{proposition} \label{leibrule}
Let $0<\alpha<1$, $p, \ p_1, \ p_2 \in (1,+\infty)$ with $\frac1{p_1}+\frac1{p_2}=\frac1p$
and $\alpha_1, \ \alpha_2 \in [0,\alpha]$ with $\alpha=\alpha_1+\alpha_2$. Then,
\begin{equation} \label{leibrule1}
\big\|D^{\alpha}_x(fg)-fD^{\alpha}_xg-gD^{\alpha}_xf \big\|_{L^p}
\lesssim \|D_x^{\alpha_1}g\|_{L^{p_1}}\|D^{\alpha_2}_xf\|_{L^{p_2}}.
\end{equation}
Moreover, for $\alpha_1=0$, the value $p_1=+\infty$ is allowed.
\end{proposition}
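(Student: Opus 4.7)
The plan is to view the left-hand side as a bilinear Fourier multiplier
\begin{equation*}
T_m(f,g)(x) = \int_{\mathbb R^2} e^{ix(\xi+\eta)}\,m(\xi,\eta)\,\widehat f(\xi)\widehat g(\eta)\,d\xi\,d\eta, \qquad m(\xi,\eta):=|\xi+\eta|^\alpha - |\xi|^\alpha - |\eta|^\alpha,
\end{equation*}
and to show that, after a paraproduct decomposition, $m$ can be absorbed into the Coifman--Meyer class with effective size $|\xi|^{\alpha_2}|\eta|^{\alpha_1}$. H\"older's inequality together with the Coifman--Meyer bilinear multiplier theorem would then deliver \eqref{leibrule1} in the bulk range $1<p_1,p_2<\infty$.

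\textbf{Paraproduct splitting.} I would decompose $f=\sum_{N_1}P_{N_1}f$, $g=\sum_{N_2}P_{N_2}g$ dyadically and treat separately the three regions (i) $N_1\ll N_2$, (ii) $N_1\gg N_2$, (iii) $N_1\sim N_2$. In (i), a first order Taylor expansion of $|\xi+\eta|^\alpha$ around $\eta$ in the direction $\xi$ (with $|\xi|\sim N_1\ll N_2\sim|\eta|$) gives $|\xi+\eta|^\alpha-|\eta|^\alpha=O(|\xi|\,|\eta|^{\alpha-1})$, so
\begin{equation*}
|m(\xi,\eta)|\lesssim |\xi|\,|\eta|^{\alpha-1}+|\xi|^\alpha \lesssim |\xi|^{\alpha_2}|\eta|^{\alpha_1},
\end{equation*}
since $\alpha_1\in[0,\alpha]$ and $|\xi|\ll|\eta|$. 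The appropriately smoothed symbol satisfies Coifman--Meyer derivative bounds, and the contribution of (i) is controlled by $\|D^{\alpha_2}_xf\|_{L^{p_2}}\|D^{\alpha_1}_xg\|_{L^{p_1}}$. Region (ii) is symmetric. In the resonant region (iii) the three terms of $m$ are all of size $N^\alpha$ and no cancellation is available, but since all frequencies are comparable, I would write $m=|\xi|^{\alpha_2}|\eta|^{\alpha_1}\widetilde m$ with $\widetilde m$ bounded, decompose diagonally as $\sum_N\varphi_N(\xi)\widetilde\varphi_N(\eta)$, and close the $\ell^2$ sum by the Littlewood--Paley square function in $L^p$.

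\textbf{Endpoint and main obstacle.} The delicate case is $\alpha_1=0$, $p_1=+\infty$, where the Coifman--Meyer framework loses $L^\infty$ control. Here I would abandon the Fourier-side analysis in favor of the pointwise representation
\begin{equation*}
D^\alpha_x(fg)(x) - f(x)D^\alpha_x g(x) - g(x)D^\alpha_x f(x) = -c_\alpha\,\mathrm{p.v.}\!\int_{\mathbb R} \frac{(f(x+y)-f(x))(g(x+y)-g(x))}{|y|^{1+\alpha}}\,dy,
\end{equation*}
valid for $0<\alpha<1$. Bounding $|g(x+y)-g(x)|\le 2\|g\|_{L^\infty}$ reduces the right-hand side pointwise to $\|g\|_{L^\infty}$ times a kernel integral dominated by $D^\alpha_x|f|(x)$, which gives the desired $L^{p_2}$ estimate. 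In the interior range $\alpha_1>0$, a Cauchy--Schwarz split $|y|^{-(1+\alpha)}=|y|^{-(1/2+\alpha_2)}|y|^{-(1/2+\alpha_1)}$ combined with the $L^p$-equivalence $\|D^\beta h\|_{L^p}\sim\|S_\beta h\|_{L^p}$ between $D^\beta$ and the Stein square function ($0<\beta<1$, $1<p<\infty$) yields the same estimate in a uniform way. I expect the endpoint handling and the resonant symbol analysis to be the two most technical points; outside them the argument is routine Calder\'on--Zygmund.
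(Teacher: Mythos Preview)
The paper does not prove this proposition; it merely states it as the classical fractional Leibniz rule of Kenig, Ponce and Vega and cites Theorems~A.8 and~A.12 of \cite{KPV2}. So there is no ``paper's own proof'' to compare against---any correct argument you give already exceeds what the authors supply.

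Your sketch is reasonable and is in the spirit of the original proof in \cite{KPV2}, which also exploits the pointwise kernel representation
\[
D^\alpha_x(fg)-fD^\alpha_xg-gD^\alpha_xf=-c_\alpha\,\mathrm{p.v.}\!\int_{\mathbb R}\frac{(f(x+y)-f(x))(g(x+y)-g(x))}{|y|^{1+\alpha}}\,dy
\]
valid for $0<\alpha<1$. The Coifman--Meyer route you outline for the bulk case $1<p_1,p_2<\infty$ is a legitimate alternative to the direct kernel analysis in \cite{KPV2}; it is slightly heavier machinery but packages the paraproduct bookkeeping cleanly.

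One point to tighten: in the endpoint step $\alpha_1=0$, $p_1=\infty$, after bounding $|g(x+y)-g(x)|\le 2\|g\|_{L^\infty}$ you are left with
\[
\int_{\mathbb R}\frac{|f(x+y)-f(x)|}{|y|^{1+\alpha}}\,dy,
\]
which is \emph{not} pointwise dominated by $D^\alpha_x|f|(x)$ as you wrote (the latter involves a principal-value cancellation that the absolute value destroys). What is true---and what you need---is the $L^p$ bound $\big\|\int|f(\cdot+y)-f(\cdot)|\,|y|^{-1-\alpha}dy\big\|_{L^p}\lesssim\|D^\alpha_xf\|_{L^p}$ for $1<p<\infty$, $0<\alpha<1$; this follows from a Littlewood--Paley decomposition of $f$ together with the mean-value bound $|\Delta_jf(x+y)-\Delta_jf(x)|\lesssim \min(1,2^j|y|)\,M(\Delta_jf)(x)$ and a vector-valued maximal inequality. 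You should make that step explicit rather than asserting a pointwise domination that does not hold.
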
 

The next estimate is a frequency localized version of estimate \eqref{leibrule1}, 
proved in \cite{Mo}, in the same spirit as Lemma 3.2 in \cite{Ta}. 
\begin{lemma} \label{lemma2}
Let $\alpha \ge 0$ and $1<q<\infty$. Then,
\begin{equation} \label{lemma2.1}
\big\|D_x^{\alpha}P_+\big(fP_-\partial_xg\big) \big\|_{L^q} \lesssim
\|D_x^{{\alpha}_1}f\|_{L^{q_1}}\|D_x^{{\alpha}_2}g\|_{L^{q_2}},
\end{equation}
with $1<q_i<\infty$, $\frac1{q_1}+\frac1{q_2}=\frac1q$ and $\alpha_1
\ge \alpha$, $\alpha_2 \ge 0$ and $\alpha_1+\alpha_2=1+\alpha$.
\end{lemma}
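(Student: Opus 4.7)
The estimate encodes a derivative gain of one full order compared with the classical fractional Leibniz rule, which reflects the asymmetric frequency interaction enforced by the projections $P_+$ and $P_-$. Indeed, on the Fourier support of $P_+(fP_-\partial_x g)$ one has $\xi>0$, $\xi_2<0$ and $\xi_1=\xi-\xi_2>0$, so $\xi_1>\max(|\xi|,|\xi_2|)$. Thus only the positive-frequency part of $f$ contributes, and its frequency always dominates both the output frequency and the negative frequency of $g$; this is exactly where we want to place the stronger derivative $D_x^{\alpha_1}$.

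The plan is to prove the estimate by a Littlewood-Paley decomposition, in the spirit of Lemma~3.2 of \cite{Ta}. Write $f=\sum_N P_{+N}f$ with $P_{+N}=P_+P_N$, and $\partial_x g=\sum_M P_{-M}\partial_x g$. The frequency-support observation above implies that $P_+(P_{+N}f\cdot P_{-M}\partial_x g)\equiv 0$ unless $M\lesssim N$, and in that case the resulting Fourier support lies in $(0,2N]$. Hence Bernstein's inequality applied to $D_x^\alpha$ yields
\[
\bigl\|D_x^\alpha P_+\bigl(P_{+N}f\cdot P_{-M}\partial_x g\bigr)\bigr\|_{L^q}\lesssim N^\alpha\,\|P_{+N}f\|_{L^{q_1}}\|P_{-M}\partial_x g\|_{L^{q_2}}.
\]
Applying the Bernstein-type equivalences $\|P_{+N}f\|_{L^{q_1}}\sim N^{-\alpha_1}\|D_x^{\alpha_1}P_{+N}f\|_{L^{q_1}}$ and $\|P_{-M}\partial_x g\|_{L^{q_2}}\sim M^{1-\alpha_2}\|D_x^{\alpha_2}P_{-M}g\|_{L^{q_2}}$, together with the identity $\alpha_1-\alpha=1-\alpha_2$, the prefactor rearranges to $(M/N)^{\alpha_1-\alpha}\le 1$.

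To sum these dyadic contributions and recover the $L^q$-norm of the full product, I would invoke the Littlewood-Paley square function theorem (valid for $1<q<\infty$) together with the Fefferman-Stein vector-valued maximal inequality. Organizing the sum according to the output frequency scale $K$ splits the interactions into a paraproduct-type regime ($N\sim K$, $M\lesssim N$) and a resonant regime ($N\sim M\gtrsim K$, with small output). In each regime a Schur test in the off-diagonal index (based on the geometric factor $(M/N)^{\alpha_1-\alpha}$, respectively $(K/N)^\alpha$ after analogous Bernstein bookkeeping), combined with a Cauchy-Schwarz application to the square function, produces the desired product bound $\|D_x^{\alpha_1}f\|_{L^{q_1}}\|D_x^{\alpha_2}g\|_{L^{q_2}}$.

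I expect the main technical obstacle to be the borderline case $\alpha_1=\alpha$ (equivalently $\alpha_2=1$), and more generally the resonant regime, where the geometric factor degenerates and naive triangle-inequality summation would incur a logarithmic loss. The hypotheses $1<q,q_1,q_2<\infty$ are precisely what allow one to trade this potential loss for the boundedness of Hardy-Littlewood maximal operators, and explain why the endpoints $q=1$ or $q=\infty$ must be excluded.
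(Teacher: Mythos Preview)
The paper does not give its own proof of this lemma; it simply cites the result from \cite{Mo} and remarks that the argument is ``in the same spirit as Lemma~3.2 in \cite{Ta}.'' Your proposal---a Littlewood--Paley decomposition exploiting the frequency constraint $\xi_1>\max(|\xi|,|\xi_2|)$ forced by the projections $P_+$ and $P_-$, followed by Bernstein inequalities and paraproduct/square-function summation---is precisely the method of those cited references, so your approach matches what the paper invokes.

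Your reading of the frequency structure and the derivative redistribution $(M/N)^{\alpha_1-\alpha}$ is correct, and so is your diagnosis that the endpoint $\alpha_1=\alpha$ (equivalently $\alpha_2=1$) is where naive $\ell^1$ summation fails and one must fall back on the Littlewood--Paley square function and the Fefferman--Stein maximal inequality; the hypotheses $1<q,q_1,q_2<\infty$ are there exactly to make those tools available. One small point worth making explicit when you write this up: the resonant regime $N\sim M\gtrsim K$ with $\alpha=0$ also has a vanishing geometric factor $(K/N)^\alpha$, but in that case there is no outer $D_x^\alpha$ to localize and one simply bounds the full product pointwise by the product of square functions before applying H\"older, so no logarithmic loss appears.
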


We also state an estimate to handle the multiplication by a
term on the form $e^{\pm\frac{i}{2}F}$, where $F$ is a real-valued function,
in fractional Sobolev spaces.
\begin{lemma} \label{lemma1}
Let $2 \le q <\infty$ and $1 \le s \le \frac32$. Consider $F$ and
$F_1$ two real-valued functions such that $v=\partial_xF$ and $v_1=\partial_xF_1$ belong
to $L^2(\mathbb R)$. Then, it holds that
\begin{equation} \label{lemma1.1}
\|J^s_x\big(e^{\pm iF} g\big)\|_{L^q} \lesssim
(1+\|v\|_{H^1}^2)\|J^s_xg\|_{L^q}.
\end{equation}
\end{lemma}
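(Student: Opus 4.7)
Since $|e^{\pm iF}|=1$, the bound $\|e^{\pm iF}g\|_{L^q}=\|g\|_{L^q}$ is trivial, so it suffices to control $\|D_x^s(e^{\pm iF}g)\|_{L^q}$ for $s\in[1,3/2]$. The case $s=1$ follows immediately from the product rule $\partial_x(e^{\pm iF}g)=\pm ive^{\pm iF}g+e^{\pm iF}\partial_xg$ together with the one-dimensional Sobolev embedding $H^1(\mathbb R)\hookrightarrow L^\infty(\mathbb R)$, which gives $\|v\|_{L^\infty}\lesssim\|v\|_{H^1}$.

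For $s=1+\alpha$ with $\alpha\in(0,1/2]$, I would use the identity $D_x^s=D_x^\alpha\mathcal{H}\partial_x$ (modulo operators bounded on $L^q$ for $1<q<\infty$) and the product rule above to reduce to bounding $\|D_x^\alpha(ve^{\pm iF}g)\|_{L^q}$ and $\|D_x^\alpha(e^{\pm iF}\partial_xg)\|_{L^q}$. Each is handled via the fractional Leibniz rule of Proposition~\ref{leibrule}, always placing the bounded factor $e^{\pm iF}$ in $L^\infty$. After Leibniz, the dominant pieces are of the form $e^{\pm iF}D_x^\alpha(\cdot)$ and are bounded directly by $\|J_x^sg\|_{L^q}$; the error pieces reduce to products of $g$ or $\partial_xg$ with fractional derivatives of $e^{\pm iF}$.

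To close the estimates one needs two Besov-type bounds on $D_x^\alpha e^{\pm iF}$. The first, $\|D_x^\alpha e^{\pm iF}\|_{L^\infty}\lesssim 1+\|v\|_{H^1}^2$, is used to estimate $\|(\partial_xg)\,D_x^\alpha e^{\pm iF}\|_{L^q}\leq\|\partial_xg\|_{L^q}\|D_x^\alpha e^{\pm iF}\|_{L^\infty}$. The second relies on a high--low decomposition and gives $\|D_x^\alpha P_{hi}e^{\pm iF}\|_{L^q}\lesssim\|v\|_{L^2}$ together with $\|D_x^\alpha P_{lo}e^{\pm iF}\|_{L^\infty}\lesssim 1$; these are combined in
\[
\|gD_x^\alpha e^{\pm iF}\|_{L^q}\leq\|g\|_{L^\infty}\|D_x^\alpha P_{hi}e^{\pm iF}\|_{L^q}+\|g\|_{L^q}\|D_x^\alpha P_{lo}e^{\pm iF}\|_{L^\infty},
\]
with $\|g\|_{L^\infty}$ controlled by $\|J_x^sg\|_{L^q}$ via the Sobolev embedding $W^{s,q}(\mathbb R)\hookrightarrow L^\infty(\mathbb R)$ (valid for $q\geq 2$, $s\geq 1$). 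Both bounds follow from a Littlewood--Paley decomposition combined with Bernstein's inequality in 1D, applied to the identities $\partial_xe^{\pm iF}=\pm ive^{\pm iF}$ and $\partial_x^2e^{\pm iF}=(\pm iv_x-v^2)e^{\pm iF}$. The quadratic factor $\|v\|_{H^1}^2$ in the conclusion originates precisely from the estimate $\|v^2\|_{L^2}=\|v\|_{L^4}^2\lesssim\|v\|_{H^1}^2$ appearing in $\|\partial_x^2e^{\pm iF}\|_{L^2}$.

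I expect the main obstacle to be the endpoint $s=3/2$ (i.e.\ $\alpha=1/2$), where the dyadic summation defining $\|D_x^{1/2}e^{\pm iF}\|_{L^\infty}$ barely converges and requires Cauchy--Schwarz on $\sum_{N\ge 1}N^{-1}\|P_N\partial_x^2e^{\pm iF}\|_{L^2}$ to close. Moreover, one has to be careful in the error terms about when to use the linear $L^q$-bound on $D_x^\alpha P_{hi}e^{\pm iF}$ paired with $g\in L^\infty$ versus the quadratic $L^\infty$-bound on $D_x^\alpha e^{\pm iF}$ paired with $\partial_xg\in L^q$; a naive uniform use of the $L^\infty$-bound would multiply $\|v\|_{L^\infty}\lesssim\|v\|_{H^1}$ by $1+\|v\|_{H^1}^2$ and yield a cubic bound $1+\|v\|_{H^1}^3$, so the sharper quadratic conclusion stated in \eqref{lemma1.1} depends on this case distinction.
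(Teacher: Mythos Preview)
Your approach is essentially correct and follows the standard route. The paper itself does not supply a detailed proof of this lemma; it simply refers to Lemma~2.7 of \cite{MP} (see also \cite{Mo, Mo1}). The argument there is exactly along the lines you describe: write $D_x^s=D_x^{\alpha}\mathcal H\partial_x$ with $\alpha=s-1\in[0,1/2]$, expand $\partial_x(e^{\pm iF}g)=\pm iv\,e^{\pm iF}g+e^{\pm iF}\partial_xg$, and then apply the Kenig--Ponce--Vega fractional Leibniz rule (Proposition~\ref{leibrule}) together with Bernstein-type control on $D_x^{\alpha}e^{\pm iF}$ coming from the identities $\partial_x e^{\pm iF}=\pm iv\,e^{\pm iF}$ and $\partial_x^2 e^{\pm iF}=(\pm iv_x-v^2)e^{\pm iF}$. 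Your observation that the endpoint $\alpha=1/2$ is the delicate case, and that it is precisely the $\|v^2\|_{L^2}\lesssim\|v\|_{H^1}^2$ piece of $\partial_x^2 e^{\pm iF}$ that produces the quadratic factor in the conclusion, is accurate. One minor correction: the bound $\|D_x^{\alpha}P_{hi}e^{\pm iF}\|_{L^q}\lesssim\|v\|_{L^2}$ that you state should read $\lesssim\|v\|_{L^q}$ for general $q\ge 2$ (write $D_x^{\alpha}P_{hi}=D_x^{\alpha-1}\mathcal H P_{hi}\partial_x$ and use that $D_x^{\alpha-1}P_{hi}$ is bounded on $L^q$); since $\|v\|_{L^q}\lesssim\|v\|_{H^1}$ this does not affect the final inequality.
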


\begin{remark} The proof follows the lines of Lemma 2.7 in \cite{MP} (see also \cite{Mo, Mo1}). A version of Lemma \ref{lemma1} could also be stated for $s >\frac{3}{2}$.
\end{remark}

\section{The gauge transformation} \label{gt}
The gauge transform we will use is  the one introduced by Tao in \cite{Ta}. 
First we  define an antiderivative $F=F[v]$ of $ v$. 
We  determine $F$ on the time axis $x=0$ by solving the ODE
\begin{displaymath}
\left\{\begin{array}{l}
\partial_tF(0,t)=\big(b\mathcal{H}v_x+a\epsilon v_{xx}+\frac{c}2A^{-1}v^2-\frac32a\epsilon
(v\mathcal{H}v_x+\mathcal{H}(vv_x))\big)(0,t), \\
F(0,0)=0, \end{array} \right.
\end{displaymath}
Then we extend $ F $ on the whole plan by setting 
\begin{displaymath}
\partial_xF=Av, \quad \text{where} \quad A=\frac{2d}{3a}.
\end{displaymath}
Clearly, it holds 
\begin{displaymath}
\partial_x\big(\partial_tF-b\mathcal{H}\partial^2_xF-a\epsilon \partial_x^3F\big)=
\partial_x\big(\frac{c}2A^{-1}F_x^2-\frac32a\epsilon(F_x\mathcal{H}F_{xx}+\mathcal{H}(F_xF_{xx}))\big).
\end{displaymath}
and, according to  the choice of $ F $ on the time axis,  it satisfies the equation
\begin{equation} \label{F}
\partial_tF-b\mathcal{H}\partial^2_xF-a\epsilon \partial_x^3F=
\frac{c}2A^{-1}F_x^2-\frac32a\epsilon(F_x\mathcal{H}F_{xx}+\mathcal{H}(F_xF_{xx})).
\end{equation}

Now,  we
perform the following nonlinear transformation
\begin{equation} \label{w}
W=P_{+hi}(e^{iF}) \quad \text{and} \quad w=W_x=iAP_{+hi}(e^{iF}v).
\end{equation}
First, using the identity $\mathcal{H}P_+=-iP_+$, we compute
\begin{eqnarray*}
\lefteqn{\partial_tW+ib\partial^2_xW-a\epsilon \partial_x^3W} \\
&=iP_{+hi}\big(e^{iF}(\partial_tF+ib\partial^2_xF-a\epsilon\partial_x^3F
-bF_x^2-3ia\epsilon F_xF_{xx}-a\epsilon F_x^3)\big).
\end{eqnarray*}
Then using \eqref{F} and the identity
$\mathcal{H}+i=2iP_-$ it follows that 
\begin{eqnarray*}
\lefteqn{\partial_tW+ib\partial^2_xW-a\epsilon \partial_x^3W} \\
&&=P_{+hi}\big(e^{iF}(i(\frac{c}{2}A^{-1}-b)F_x^2-ia\epsilon
F_x^3-2bP_-F_{xx}+3a\epsilon F_xP_-F_{xx}+3a\epsilon
P_-(F_xF_{xx}))\big)
\\ &&
=P_{+hi}(e^{iF}\big(\alpha_1v^2+\alpha_2\epsilon
v^3)\big)+\alpha_3P_{+hi}(WP_-v_x)+\alpha_3P_{+hi}(P_{lo}(e^{iF})P_-v_x)
\\ && \quad
+\alpha_4\epsilon P_{+hi}(wP_-v_x)+\alpha_5 \epsilon
P_{+hi}(P_{lo}(e^{iF}v)P_-v_x)+\alpha_6 \epsilon P_{+hi}(WP_-(vv_x))
\\ && \quad +\alpha_6\epsilon P_{+hi}(P_{lo}(e^{iF})P_-(vv_x)),
\end{eqnarray*}
where $\alpha_j, \ j=1 \cdots 6$ are complex constants depending on $a$, $b$, $c$ and $d$.
\begin{remark} \label{condition}
We observe from the definition of the coefficients $a$, $b$, $c$ and
$d$ in \eqref{ab} and \eqref{cd} that
\begin{equation} \label{condition1}
\alpha_1=0 \quad \Leftrightarrow \quad \frac{3ac}{4d}=b \quad
\Leftrightarrow \quad \rho^2=3\rho_1^2.
\end{equation}
\end{remark}

In the following, we will fix $\alpha_1=\cdots \alpha_6=1$ for sake
of simplicity. Therefore, we deduce by differentiating the above
equation that $w$ is a solution to
\begin{equation} \label{eqw}
\begin{split}
\partial_tw+ib\partial^2_xw-a&\epsilon \partial_x^3w=\partial_xP_{+hi}(e^{iF}(v^2+\epsilon v^3)) \\
&+\partial_xP_{+hi}(WP_-v_x)+\partial_xP_{+hi}(P_{lo}(e^{iF})P_-v_x)\\
&+\epsilon\partial_xP_{+hi}(wP_-v_x)+\epsilon\partial_xP_{+hi}(P_{lo}(e^{iF}v)P_-v_x)
\\ &+\epsilon\partial_xP_{+hi}(WP_-(vv_x))
+\epsilon\partial_xP_{+hi}(P_{lo}(e^{iF})P_-(vv_x)) \\ 
& \quad \quad \ := N(e^{iF},v,W,w).
\end{split}
\end{equation}
On the other hand, we can recover $v$ as a function of $w$ by writing
\begin{equation} \label{v}
iAv=e^{-iF}\partial_x(e^{iF})=e^{-iF}w+e^{-iF}\partial_xP_{lo}(e^{iF})+e^{-iF}\partial_xP_{-hi}(e^{iF}),
\end{equation}
so that it follows from the frequency localization
\begin{equation} \label{P+v}
\begin{split}
iAP_{+HI}v&=P_{+HI}(e^{-iF}w)+P_{+HI}(P_{+hi}e^{-iF}\partial_xP_{lo}(e^{iF})) \\ & \quad+P_{+HI}(P_{+HI}e^{-iF}\partial_xP_{-hi}(e^{iF})).
\end{split}
\end{equation}

Then, we have the following \textit{a priori} estimates on $v$ in
terms of $w$.
\begin{proposition} \label{apriori v}
Let $s \ge 1$, $0<T\le1$, $0\le\theta\le 1$, $0<\epsilon \le 1$ and $v$ be a solution to
\eqref{hoBO} in the time interval $[0,T]$. Then, it holds that
\begin{equation} \label{apriori v.1}
\|v\|_{X^{s-2\theta,\theta}_{\epsilon,T}} \lesssim \|v\|_{L ^{\infty}_TH^s_x}+
\|v\|_{L^{\infty}_TH^s_x}^2+\epsilon \|J^s_xv\|_{L^4_{T,x}}^2 .
\end{equation}
Moreover, if $1 \le s \le \frac32$, it holds that
\begin{equation} \label{apriori v.2}
\|J^s_xv\|_{L^{\infty}_TL^2_x} \lesssim \|v_0\|_{H^1}
+\big(1+\|v\|_{L^{\infty}_TH^1_x}^2\big)\big(\|w\|_{X^{s,
\frac12,1}_{\epsilon,T}}+\|v\|_{L^{\infty}_TH^1_x}^2\big),
\end{equation}
\begin{equation} \label{apriori v.3}
\|J^s_xv\|_{L^4_{x,t}} \lesssim \|v_0\|_{H^1}
+\big(1+\|v\|_{L^{\infty}_TH^1_x}^2\big)\big(\epsilon^{-\frac{1}{12}}\|w\|_{X^{s,
\frac13+}_{\epsilon,T}}+\|v\|_{L^{\infty}_TH^1_x}^2\big),
\end{equation}
\begin{equation} \label{apriori v.13}
\|v \|_{L^{2}_x L^\infty_T} \lesssim \varepsilon^{-1} \Bigr( \|v_0\|_{H^1}+ \|v\|_{L^{\infty}_TH^1_x}(
 \|w\|_{X^{1,1/2,1}_{\epsilon,T}} +   \|v\|_{L^{\infty}_TH^1_x} + \|v\|_{L^2_x L^\infty_T})
\Bigl),
\end{equation}
and 
\begin{equation} \label{apriori v.14b} 
\begin{split}
\|J^s_x\partial_x v\|_{\widetilde{L^{\infty}_x L^2_T}} & \lesssim   \Big(\epsilon^{-\frac32+}\|v_0\|_{H^s}+
\epsilon^{-\frac32+}\|w\|_{X^{s,\frac12,1}_{\epsilon,T}}\\ & \quad \ +\|v\|_{L^{\infty}_TH^1_x}\big(\|w\|_{X^{1,\frac12,1}_{\epsilon,T}}+\|J_x^s \partial_xv \|_{\widetilde{L^\infty _xL^{2}_T}}+\|v\|_{L^{\infty}_TH^1_x}\big)\Big).
\end{split}
\end{equation}
\end{proposition}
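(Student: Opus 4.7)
The plan splits the five inequalities into two families. The estimate \eqref{apriori v.1} is derived directly from the equation \eqref{hoBO}, while \eqref{apriori v.2}--\eqref{apriori v.14b} all come from the inversion formulas \eqref{v}--\eqref{P+v} for the gauge transformation, combined with the linear estimates of Section~2 applied to $w$ and with Lemma \ref{lemma1} to handle the multiplication by $e^{\pm iF}$.

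For \eqref{apriori v.1}, I would treat the endpoints $\theta=0$ and $\theta=1$ and then interpolate. The case $\theta=0$ is immediate since $\|v\|_{X^{s,0}_{\epsilon,T}}\lesssim T^{1/2}\|v\|_{L^\infty_T H^s_x}\le\|v\|_{L^\infty_T H^s_x}$ as $T\le 1$. For $\theta=1$, after extending $v$ to $\mathbb R\times \mathbb R$ with a smooth time cutoff, the Fourier identity
\[(\tau-b|\xi|\xi+a\epsilon\xi^3)\widehat{v}=-i\,\mathcal{F}\bigl((\partial_t-b\mathcal{H}\partial_x^2-a\epsilon\partial_x^3)v\bigr)\]
together with \eqref{hoBO} reduces the control of $\|v\|_{X^{s-2,1}_{\epsilon,T}}$ to an $L^2_{T,x}$ bound on $J^{s-2}_x$ of the right-hand side of \eqref{hoBO}. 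A standard product-rule estimate (using $s\ge 1$) gives $\|J^{s-2}_x(v\partial_xv)\|_{L^2_{T,x}}\lesssim \|v\|_{L^\infty_T H^s_x}^2$, while H\"older's inequality and the fractional Leibniz rule of Proposition \ref{leibrule} yield
\[\epsilon\|J^{s-2}_x\partial_x\bigl(v\mathcal{H}\partial_xv+\mathcal{H}(v\partial_xv)\bigr)\|_{L^2_{T,x}}\lesssim \epsilon\|J^s_x v\|_{L^4_{T,x}}^2.\]
Interpolating between the two endpoints then produces \eqref{apriori v.1} for every $0\le\theta\le 1$.

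For the four dispersive bounds I use that $v$ is real, so that $v=P_{LO}v+P_{+HI}v+\overline{P_{+HI}v}$. The low-frequency block is controlled pointwise in $t$ in any $L^p_x$ norm by $\|v\|_{L^\infty_T L^2_x}=\|v_0\|_{L^2}$, thanks to the mass conservation \eqref{M}, and this is what provides the $\|v_0\|_{H^1}$ terms on the right-hand sides. The high-frequency block is expanded via \eqref{P+v} into the main contribution $P_{+HI}(e^{-iF}w)$ and two remainders $P_{+HI}(P_{+hi}e^{-iF}\cdot\partial_xP_{lo}e^{iF})$ and $P_{+HI}(P_{+HI}e^{-iF}\cdot\partial_xP_{-hi}e^{iF})$; after unfolding $\partial_xe^{\pm iF}=\pm iAv e^{\pm iF}$, the latter are bilinear in $v$ and, by their frequency localization together with Bernstein's inequality and Sobolev embedding, are bounded in each norm at hand by $\|v\|_{L^\infty_T H^1_x}^2$, which accounts for the quadratic terms in the statement. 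For the main contribution, Lemma \ref{lemma1} replaces multiplication by $e^{-iF}$ by the factor $(1+\|v\|_{L^\infty_T H^1_x}^2)$ times the same norm applied to $w$, and one then invokes respectively the embedding \eqref{propBourgain2} for \eqref{apriori v.2}, the Strichartz estimate \eqref{strichartz8} with $\theta=2/3$ (so that $p_\theta=4$ and the factor $\epsilon^{-1/12}$ appears) for \eqref{apriori v.3}, the maximal function estimate \eqref{maximal1} with $s=1$ (which produces $\epsilon^{-1}$) for \eqref{apriori v.13}, and the Kato smoothing estimate \eqref{smoothing4} (producing $\epsilon^{-3/2+}$) for \eqref{apriori v.14b}.

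The main obstacle I anticipate is the smoothing estimate \eqref{apriori v.14b}: when the extra $\partial_x$ falls on $e^{-iF}$ rather than on $w$ inside $P_{+HI}(e^{-iF}w)$, it produces a term essentially of the form $P_{+HI}(v e^{-iF}w)$ whose $\widetilde{L^\infty_xL^2_T}$ norm cannot be fully absorbed by the linear estimates on $w$ alone, so that $\|J^s_x\partial_x v\|_{\widetilde{L^\infty_xL^2_T}}$ itself has to be brought back on the right-hand side, which is precisely why it appears there. This self-coupling will be closed later in the proof of Theorem \ref{theo1} by choosing $T$ small enough. A similar care is needed for the low- and negative-frequency remainders in the smoothing and maximal function norms, where one exploits their frequency localization together with the mass conservation in order to recover the $\epsilon^{-3/2+}\|v_0\|_{H^s}$ contribution.
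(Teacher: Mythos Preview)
Your treatment of \eqref{apriori v.1}--\eqref{apriori v.3} matches the paper's: the interpolation argument for \eqref{apriori v.1} and the use of \eqref{P+v} together with Lemma~\ref{lemma1} and the embedding/Strichartz estimates for \eqref{apriori v.2}--\eqref{apriori v.3} are exactly what the authors do (they refer to \cite{MP} for details).

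There is, however, a genuine gap in your scheme for \eqref{apriori v.13} and \eqref{apriori v.14b}. Lemma~\ref{lemma1} is an estimate in $L^q_x$ at fixed time and does not transfer to the mixed norms $L^2_xL^\infty_T$ or $\widetilde{L^\infty_xL^2_T}$; more fundamentally, the projector $P_{+HI}$ is \emph{not} bounded on $L^2_xL^\infty_T$, so one cannot simply strip it off and invoke \eqref{maximal1} on $w$. The paper overcomes this by writing $P_{+HI}=\sum_{k\ge 4}P_{+2^k}$, using the paraproduct decomposition
\[
P_{+2^k}(e^{-iF}w)=P_{+2^k}\Bigl(\sum_{j\ge k-3}P_{2^j}w\,P_{\le 2^{j-1}}e^{-iF}\Bigr)+P_{+2^k}\Bigl(\sum_{j\ge k-3}P_{\le 2^j}w\,P_{2^j}e^{-iF}\Bigr),
\]
and summing the dyadic pieces by hand (discrete Young); this is where the self-coupling terms $\|v\|_{L^2_xL^\infty_T}$ and $\|J^s_x\partial_xv\|_{\widetilde{L^\infty_xL^2_T}}$ on the right-hand sides of \eqref{apriori v.13} and \eqref{apriori v.14b} actually arise. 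In particular, the remainders in \eqref{P+v} are \emph{not} bounded by $\|v\|_{L^\infty_TH^1_x}^2$ in the $L^2_xL^\infty_T$ norm; the paper obtains $\|v\|_{L^\infty_TH^1_x}\|v\|_{L^2_xL^\infty_T}$ for them, which is precisely why $\|v\|_{L^2_xL^\infty_T}$ appears on the right of \eqref{apriori v.13}.

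Your handling of the low frequencies for these two estimates is also insufficient: the pointwise-in-$t$ bound $\|P_{LO}v(\cdot,t)\|_{L^p_x}\lesssim\|v_0\|_{L^2}$ does not control $\|P_{LO}v\|_{L^2_xL^\infty_T}$ or $\|P_{LO}v\|_{\widetilde{L^\infty_xL^2_T}}$. The paper instead writes the Duhamel formula for $P_{LO}v$ from \eqref{hoBO} and applies the linear maximal estimate \eqref{maximal5} (respectively Sobolev in $x$) to both the free and forced parts, which produces the $\epsilon^{-(\frac34+)}(\|v_0\|_{H^1}+\|v\|_{L^\infty_TH^1_x}^2)$ contribution.
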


\begin{remark}  It is worth noticing that estimates \eqref{apriori v.2} and \eqref{apriori v.3} could be rewritten in a convenient form for $s>\frac32$.
\end{remark}

\begin{proof}  \eqref{apriori v.2} and \eqref{apriori v.3} follow from \eqref{P+v} for the high frequencies and from \eqref{hoBO} for the low frequencies (see for instance \cite{MP}).  To prove  \eqref{apriori v.1} we proceed  as in  \cite{MP}, noticing that according to  \eqref{hoBO}, 
$$
\| \partial_t (V_\varepsilon (-t) u(t) ) \|_{L^2_T H^{s-2}_x} \lesssim \| J^s_x u \|_{L^4_{Tx}}^2 \; .
$$
To prove estimate \eqref{apriori v.13}, we also split $v$ between its high and low Fourier modes 
\begin{equation} \label{apriori v.14}
\|v\|_{\widetilde{L^2_xL^{\infty}_T}} \lesssim
\|P_{LO}v\|_{\widetilde{L^2_xL^{\infty}_T}} +\|P_{HI}v\|_{\widetilde{L^2_xL^{\infty}_T}} .
\end{equation} 
The low frequency  term on the right-hand side of \eqref{apriori v.14} can be treated by using \eqref{hoBO} and the maximal function estimate \eqref{maximal5} to get 
\begin{equation} \label{apriori v.15}
\|P_{LO}v\|_{\widetilde{L^2_xL^{\infty}_T}} \lesssim
\epsilon^{-(\frac34+)}\left(\|v_0\|_{H^1}+\|v\|_{L^{\infty}_TH^1_x}^2\right).
\end{equation} 
To treat the high frequency term we use that $ v $ is real-valued to first notice that 
$$
\|P_{HI} v \|_{L^2_x L^\infty_T} \lesssim 2 \| P_{+HI} v \|_{L^2_x L^\infty_T} 
$$
so that we are reduced to estimate each terms on the right-hand side of \eqref{P+v}. Now the problem is that $ P_{+HI} $ is not continuous in $ L^2_x L^\infty_T $. We will overcome this difficulty  by noticing  that  $ P_{+HI}=\sum_{k\ge 4} P_{+ 2^k} $ and that the family of operators $  P_{+ 2k} $ is bounded in $ L^2_x L^\infty_T $. To treat  the first term of the  right-hand side of \eqref{P+v} we first notice that for $ k\ge 4 $, 
\begin{equation}
P_{+2^k} (e^{-iF} w)=P_{+2^k}\Bigl(  \sum_{j\ge k-3} P_{2^j} w P_{\le 2^{j-1}} (e^{-iF})\Bigr) + P_{+2^k}\Bigl(  \sum_{j\ge k-3} P_{\le 2^{j}}
w P_{ 2^{j}} (e^{-iF})\Bigr)  \label{tyty}
\end{equation}
so that 
\begin{eqnarray*}
\|P_{+HI} (e^{-iF} w)  \|_{L^2_x L^\infty_T} & \lesssim &  \sum_{k\ge 4} \|P_{+2^k } (e^{-iF} w)  \|_{L^2_x L^\infty_T} \\
& \lesssim & \sum_{k\ge 4} \Bigl[ \sum_{j\ge k-3} \|P_{2^j  } w \|_{L^2_x L^\infty_T} \\ && + 
\sum_{j\ge k-3} \|P_{ 2^{j}} (e^{-iF})\|_{L^\infty_{T,x}} \|P_{\le 2^j} w\|_{L^2_x L^\infty_T}  \Bigr].
\end{eqnarray*}
But on one hand, for $ s\ge 1 $ we deduce from \eqref{maximal1} and Bernstein inequalities  that for $ \alpha\in ]0,1/4[ $, 
\begin{displaymath}
\begin{split}
\sum_{k\ge 4} \sum_{j\ge k-3} \|P_{+2^j  } w \|_{L^2_x L^\infty_T}& \lesssim \sum_{k\ge 4} \sum_{j\ge k-3} 2^{-\alpha j} 
\|D_x^\alpha P_{+2^j  } w \|_{L^2_x L^\infty_T} \\ &
\lesssim \sup_{k\ge 3} \| D_x^\alpha P_{2^k}  w \|_{L^2_x L^\infty_T}
\lesssim \varepsilon ^{-1} \|w\|_{X^{1,1/2,1}_T}
\end{split}
\end{displaymath}
and on the other hand, 
\begin{eqnarray*}
\sum_{k\ge 4} \sum_{j\ge k-3} \|P_{ 2^{j}} (e^{-iF})\|_{L^\infty_{Tx}} \|P_{\le 2^j} w\|_{L^2_x L^\infty_T} & \lesssim  & 
\sum_{k\ge 4} \sum_{j\ge k-3} 2^{-j} \| v \|_{L^\infty_{Tx}} \|P_{\le 2^j} w\|_{L^2_x L^\infty_T}\\
& \lesssim &  \| v\|_{L^\infty_{T}H^1_x} \sup_{k\ge 1} \| P_{2^k}  w \|_{L^2_x L^\infty_T} \\ &\lesssim & \varepsilon ^{-1} \|w\|_{X^{1,1/2,1}_T} 
\| v\|_{L^\infty_{T}H^1_x},
\end{eqnarray*}
which completes the estimate of the term $P_{+HI} (e^{-iF} w)$. To treat the second term and third terms  of the  right-hand side of \eqref{P+v} we proceed as above, using the frequency localization due to the projections,  to obtain 
\begin{eqnarray} \label{apriori v.17} 
\big\| P_{+HI} \big(e^{-iF}P_{-hi}(e^{iF}v)\big)\big\|_{L^2_xL^{\infty}_T}  &\lesssim & 
\sum_{N \ge 2^4} \sum_{K \ge N/4} \|P_{K}e^{iF}\|_{L^{\infty}_{x,T}}\|P_{\le K}P_{-hi}(e^{iF}v)\|_{L^2_xL^{\infty}_T}\Big) \nonumber\\
& \lesssim  & 
\sum_{N \ge 2^4} \Bigr( \sum_{K \ge N/4} K^{-1} \|v\|_{L^{\infty}_{x,T}} \sum_{2\le Q\le K} \|P_{+Q}v\|_{L^2_xL^{\infty}_T}\Bigr)\nonumber\\
 & \lesssim & \|v\|_{L^{\infty}_TH^1_x}\|v\|_{L^2_xL^{\infty}_T}.\label{apriori v.1717} 
\end{eqnarray}
which completes the proof of \eqref{apriori v.13}.  

Finally, to prove \eqref{apriori v.14b} we proceed similarly. First we use \eqref{hoBO} and Sobolev inequality 
 to get 
 $$
 \|P_{LO}v\|_{\widetilde{L^\infty_x L^{2}_T}} \lesssim
\left(\|v_0\|_{H^1}+\|v\|_{L^{\infty}_TH^1_x}^2\right).
 $$
Second, from \eqref{tyty} we deduce that  for any  integer $ k\ge 4 $,  
 \begin{displaymath}
 \begin{split}
\|J^s_x& \partial_x P_{+2^k} (e^{-iF} w)  \|_{L^\infty_x L^2_T} \\ & \lesssim  2^{k(s+1)} \Bigl(  \sum_{j\ge k-3} \|P_{2^j } w  \|_{L^\infty_x L^2_T}
+  \sum_{j\ge k-3} \|P_{\le 2^{j} } w  \|_{L^\infty_{T,x}} \|P_{2^j} e^{-iF} \|_{L^\infty_{x}L^2_T }\Bigr) \\
& \lesssim  \sum_{j\ge k-3}  2^{(k-j)(s+1)} \|J^s_x \partial_x P_{2^j  } w \|_{L^\infty_x L^2_T} + 
 \| w\|_{L^2_T H^1_x}\sum_{j \ge k-3}2^{-j} \|J^s_x \partial_x  P_{2^{j}} v\|_{L^\infty_{x}L^2_T } . 
\end{split}
\end{displaymath}
Therefore, noticing that the first term on the right-hand side of the above inequality is a discrete convolutions between $  \{ 2^{q(s+1)}\}_{q\le 1} \in l^1(\Z) $ and  
 $\{  \|J^s_x \partial_x P_{2^j  } w \|_{L^\infty_x L^2_T}\}_{j\in \Z_+}$, we deduce from Young's inequality that 
 \begin{eqnarray*}
  \|J^s_x \partial_x P_{+HI} (e^{-iF} w)  \|_{\widetilde{L^\infty_x L^2_T}}  & \lesssim  & \|J^s_x \partial_x w  \|_{\widetilde{L^\infty_x L^2_T}}
 +  \| w\|_{L^2_T H^1_x}  \sup_{j \ge 3}\|J_x^s \partial_x v\|_{L^\infty_xL^{2}_T}  \\
  & \lesssim & \epsilon^{-\frac32+}\|w\|_{X^{s,1/2,1}_{\epsilon,T}}  + \|w\|_{X^{1,1/2,1}_{\epsilon,T}} \|J_x^s \partial_x v\|_{\widetilde{L^\infty_xL^{2}_T}} ,
 \end{eqnarray*}
 where we made use of \eqref{smoothing4} in the last step. 
 Third, we proceed similarly to estimate   the second and third  term of the  right-hand side of \eqref{P+v} by 
 $$
 \big\| J^s_x \partial_x P_{+HI} \big(e^{-iF}P_{lo}(e^{iF}v)\big)\big\|_{\widetilde{L^\infty_xL^{2}_T}}
+\big\| J^s_x \partial_x P_{+HI} \big(e^{-iF}P_{-hi}(e^{iF}v)\big)\big\|_{\widetilde{L^\infty_xL^{2}_T}}  $$
$$\lesssim  
 \|v\|_{L^\infty_T  H^1_x}  \sup_{j\ge 3 } \|J^s_x \partial_x  P_{2^{j}} v\|_{L^\infty_{x}L^2_T }.
 $$
 This completes the proof of \eqref{apriori v.14b} and of the proposition.
\end{proof}

\section{Bilinear estimates}
In this section, we fix $\epsilon=1$. The aim of this section
is to derive an estimate on $\|w\|_{X^{s,\frac12,1}_T}$.
\begin{proposition} \label{apriori w}
Let $0<T\le 1$, $1 \le s \le \frac32$, $v$ be a solution to \eqref{hoBO} on
the time interval $[0,T]$ and $w$ defined in \eqref{w}. Then it holds that
\begin{equation} \label{apriori w.1}
\begin{split}
&\|w\|_{X^{s,\frac12,1}_T} \lesssim \big(1+\|v_0\|_{H^1}^2
\big)\|v_0\|_{H^s} \\ & +p\big(\|w\|_{X^{s,\frac12,1}}, \|v\|_{L^{\infty}_TH^1_x}, \|v\|_{L^4_TW^{1,4}_x},\|v\|_{L^2_xL^{\infty}_T}, 
\|J^s_x\partial_x v \|_{\widetilde{L^\infty_xL^{2}_T}},
\sup_{0\le \theta \le 1}\|v\|_{X^{1-2\theta,\theta}_T} \big),
\end{split}
\end{equation}
where $p$ is a polynomial function  at least quadratic in its arguments. 
\end{proposition}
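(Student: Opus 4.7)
The plan is to invoke Duhamel's formula for equation \eqref{eqw} together with the linear estimates of Lemmas \ref{prop1.1} and \ref{prop1.2} (with $\epsilon=1$), thereby reducing the bound to
$$\|w\|_{X^{s,\frac12,1}_T} \lesssim \|w_0\|_{H^s} + \|N(e^{iF},v,W,w)\|_{X^{s,-\frac12,1}_T}.$$
For the initial data, I use $w_0 = iAP_{+hi}(e^{iF_0} v_0)$ together with Lemma \ref{lemma1} (applied to $e^{iF_0}$, which satisfies $\partial_x F_0 = A v_0 \in H^1$) to obtain $\|w_0\|_{H^s} \lesssim (1+\|v_0\|_{H^1}^2)\|v_0\|_{H^s}$, which accounts for the first term on the right-hand side of \eqref{apriori w.1}.

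Next I estimate each of the seven pieces of $N$ in the $X^{s,-\frac12,1}_T$-norm. The semilinear contributions $\partial_x P_{+hi}(e^{iF}(v^2+v^3))$ are the easiest: writing $e^{iF} = 1 + (e^{iF}-1)$, dispatching the scalar factor by Lemma \ref{lemma1} up to commutator errors, and invoking the bilinear estimate of Proposition \ref{bilin} together with the Strichartz bound \eqref{strichartz7} handles these contributions. The terms $\partial_x P_{+hi}(W P_- v_x)$ and $\partial_x P_{+hi}(P_{lo}(e^{iF}) P_- v_x)$ are the ones whose treatment motivates the gauge: the two factors are supported on disjoint half-lines (positive vs. negative) and their product is projected onto positive high frequencies, so a direct computation of the resonance function associated to the phase $\xi\mapsto b|\xi|\xi - a\xi^3$ gives $|\Omega|\gtrsim|\xi|\,|\xi_2|\,(b+a|\xi_1|)$. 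The largest modulation among input and output is therefore correspondingly large, which is exactly the gain captured by Proposition \ref{bilin} at conormal regularity $b=\tfrac12$; the second term is moreover further simplified by the uniform frequency bound on $P_{lo}(e^{iF})$.

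The four $\epsilon$-terms $\partial_x P_{+hi}(w P_- v_x)$, $\partial_x P_{+hi}(P_{lo}(e^{iF}v) P_- v_x)$, $\partial_x P_{+hi}(W P_-(v v_x))$ and $\partial_x P_{+hi}(P_{lo}(e^{iF}) P_-(v v_x))$ carry one additional power of $v$. I treat them by a dyadic decomposition followed by H\"older in space-time, passing into $X^{s,-\frac12,1}_T$ via the embedding \eqref{propBourgain1b}. The $v_x$ factor is accommodated by the Kato smoothing bounds \eqref{smoothing1} and \eqref{smoothing4}, the extra $v$ in the products $v v_x$ is handled by the maximal function estimate \eqref{maximal1} and the Strichartz bound \eqref{strichartz7}, while any surviving $e^{iF}$ multiplication is dispatched by Lemma \ref{lemma1}. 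The norms appearing on the right-hand side of \eqref{apriori w.1}, in particular $\|v\|_{L^2_x L^\infty_T}$, $\|J^s_x \partial_x v\|_{\widetilde{L^\infty_x L^2_T}}$ and the family $\|v\|_{X^{1-2\theta,\theta}_T}$, arise precisely at these steps. Since each contribution is at least bilinear in $(v,w)$, summing over the dyadic decomposition produces a polynomial $p$ of degree at least two in the listed arguments.

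The main obstacle is the resonant bilinear term $\partial_x P_{+hi}(W P_- v_x)$ at the threshold regularity $s=1$: the exponent $b=\tfrac12$ is dictated by the strength of the resonance function computed above, and because the associated bilinear estimate is borderline at this exponent, one must work with the Besov refinement $X^{s,\frac12,1}$ (summation index $q=1$ in the modulation variable) rather than the standard Bourgain space $X^{s,\frac12}$. This Besov summability both absorbs the logarithmic losses in the bilinear estimate and provides, via \eqref{propBourgain2}, the $L^\infty_t H^s_x$ control of $w$ which is in turn used to close the a priori bounds on $v$ through \eqref{apriori v.2}--\eqref{apriori v.14b}.
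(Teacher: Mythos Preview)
Your overall architecture (Duhamel, Lemmas \ref{prop1.1}--\ref{prop1.2}, the initial-data bound via Lemma \ref{lemma1}) matches the paper, but the assignment of nonlinear estimates to the individual terms of $N(e^{iF},v,W,w)$ is essentially inverted, and one of your claimed bounds does not go through.

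The term $\partial_xP_{+hi}(e^{iF}v^2)$ is \emph{not} the easy one. After distributing the derivative and applying Lemma \ref{lemma1}, you must control $\|\partial_x(v^2)\|_{L^2_TH^s_x}$ for $s=1$, hence $\|v\,v_{xx}\|_{L^2_{T,x}}$. No combination of Strichartz \eqref{strichartz7} and Proposition \ref{bilin} handles this: the second derivative on $v$ is one too many at regularity $H^1$, and Proposition \ref{bilin} is inapplicable because $e^{iF}v^2$ has no positive/negative frequency separation in its factors. In the paper this term is treated by Proposition \ref{bilinb}, pairing the maximal function $\|v\|_{L^2_xL^\infty_T}$ against the Kato smoothing norm $\|J^s_x\partial_xv\|_{\widetilde{L^\infty_xL^2_T}}$; these two norms appear in \eqref{apriori w.1} precisely for this contribution (and its cubic analogue), not for the $\epsilon$-terms as you suggest.

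Conversely, the resonance-based bilinear estimate (Proposition \ref{bilin}) is for $\partial_xP_{+hi}(wP_-\partial_xv)$ with the \emph{lowercase} $w$, i.e.\ the $\epsilon$-term carrying the highest derivative count. The $W$-terms $\partial_xP_{+hi}(WP_-v_x)$ and $\partial_xP_{+hi}(WP_-(vv_x))$ have one fewer derivative (since $W_x=w$) and are dispatched by the much simpler Proposition \ref{bilina}, which rests on the frequency-localized Leibniz rule of Lemma \ref{lemma2} and the $L^4$ Strichartz bound; no modulation analysis is needed there. Your discussion of the resonance function is correct in spirit but attached to the wrong term.
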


The main tools to  prove Proposition \ref{apriori w} are  the
following crucial bilinear estimates.
\begin{proposition} \label{bilin}
For any  $s \ge 1$, we have that
\begin{equation} \label{bilin1}
\|\partial_xP_{+hi}\big(wP_-\partial_xv\big)\|_{X^{s,-\frac12,1}} \lesssim
\|w\|_{X^{s,\frac12,1}}\sup_{0
\le \theta \le 1}\|v\|_{X^{1-2\theta,\theta}}.
\end{equation}
\end{proposition}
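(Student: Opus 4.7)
The plan is a dyadic Littlewood-Paley resolution together with the resonance identity adapted to the dispersive symbol $\tau-b|\xi|\xi+a\xi^3$.

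\textbf{Support geometry.} First I would write $w=\sum_{N_1\ge 2}P_{+N_1}w$ and $P_-v=\sum_{N_2}P_{-N_2}v$, so that $w$ is supported in $\xi_1>0$ and $P_-v$ in $\xi_2<0$. Because the outer $P_{+hi}$ requires the output frequency $\xi=\xi_1+\xi_2$ to satisfy $\xi\ge 2$, one obtains $\xi_1\ge|\xi_2|$, i.e. $N_1\gtrsim N_2$ and $N\lesssim N_1$. Only the interactions (high$^+$,\,low$^-$)$\to$high$^+$ contribute, which is exactly the gain of the gauge transform.

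\textbf{Resonance identity.} A direct computation using $\xi_1>0$, $\xi_2<0$, $\xi>0$ and $\xi=\xi_1+\xi_2$ gives, for the modulation $\sigma=\tau-b|\xi|\xi+a\xi^3$,
\begin{equation*}
\Omega:=\sigma-\sigma_1-\sigma_2=-b\bigl(\xi^2-\xi_1^2+\xi_2^2\bigr)+a\bigl(\xi^3-\xi_1^3-\xi_2^3\bigr)=\xi\,\xi_2\bigl(3a\xi_1-2b\bigr).
\end{equation*}
Away from the bounded resonant set $\{\xi_1\sim 2b/(3a)\}$ (which is harmless by Bernstein), $|\Omega|\gtrsim NN_1N_2$, hence $\max(L,L_1,L_2)\gtrsim NN_1N_2$ after localizing in modulations.

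\textbf{Trilinear estimate and use of $\theta$.} By duality against $g\in X^{-s,1/2,\infty}$ the proof reduces to bounding
\begin{equation*}
\Bigl|\int P_{+N}Q_Lg\cdot P_{+N_1}Q_{L_1}w\cdot P_{-N_2}Q_{L_2}\partial_xv\,dxdt\Bigr|
\end{equation*}
by the three relevant norms. The key flexibility is the interpolated bound
\begin{equation*}
\|P_{-N_2}Q_{L_2}v\|_{L^2_{x,t}}\le N_2^{-(1-2\theta)}L_2^{-\theta}\|v\|_{X^{1-2\theta,\theta}},\qquad \theta\in[0,1],
\end{equation*}
which is exactly why the right-hand side of \eqref{bilin1} features a $\sup_\theta$. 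I would split into three sub-cases according to the largest modulation: \textbf{(a)} $L_2=\max$: take $\theta=1/2$ so that $L_2^{-1/2}\sim(NN_1N_2)^{-1/2}$ absorbs the loss; \textbf{(b)} $L_1=\max$: use the $L_1^{1/2}$-weight of $\|w\|_{X^{s,1/2,1}}$ and push $\theta$ close to $1$ to transfer the residual $\xi$-loss onto the modulation; \textbf{(c)} $L=\max$: use the $L^{-1/2}$-weight of the target norm with $\theta=0$. In each case, Lemma \ref{strichartz} (Strichartz $L^4_{x,t}$), Lemma \ref{smoothing} (Kato smoothing $L^\infty_xL^2_t$), together with a Bernstein inequality, supply the missing $L^p$ bounds. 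Summation in $L,L_1,L_2$ is performed in $\ell^1$, afforded by the Besov structure of $X^{s,1/2,1}$; summation in $N,N_1,N_2$ follows from Cauchy-Schwarz using $N\lesssim N_1$ and $N_2\lesssim N_1$.

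\textbf{Main obstacle.} The critical configuration is sub-case (b) with $N_1\sim N_2$ and $N\ll N_1$: the output frequency is small and the resonance lower bound degenerates. One must then optimize $\theta$ close to $1$ so that the quantity $N_2^{2\theta-1}L_2^{-\theta}(NN_1N_2)^{1/2}$ stays bounded; this is possible exactly because $\theta$ is permitted up to $1$ in the hypothesis. The other potential trouble spot, the low-frequency resonance $3a\xi_1-2b\approx 0$, concerns only a bounded range of $\xi_1$ and is handled directly by Sobolev embedding.
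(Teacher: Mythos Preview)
Your framework coincides with the paper's: duality, dyadic decomposition in frequencies and modulations, the resonance identity $\sigma-\sigma_1-\sigma_2=\xi\xi_2(3a\xi_1-2b)$ yielding $L_{\max}\gtrsim NN_1N_2$, and a case split according to which of $L,L_1,L_2$ is largest. The paper also reduces to $s=1$ first.

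The gap is in your specific choices of $\theta$, which do not close the estimates as written. In case~(a) ($L_2=L_{\max}$, high--low $N\sim N_1$) the derivative loss is $N\cdot N_2$; taking $\theta=\tfrac12$ gives only $L_2^{-1/2}\sim(N_1^2N_2)^{-1/2}$, leaving a residual $\sim N_2^{1/2}$ that does not sum over $N_2\le N_1$. The paper instead takes $\theta=1$ here: with $\|v\|_{X^{-1,1}}$ one gains the full $L_2^{-1}\sim(N_1^2N_2)^{-1}$, and after an $L^4\times L^4$ Strichartz split on the dual function and on $w$ the remaining factor is $N_2/N_1$, which sums. In cases~(b) and~(c) the paper uses $\theta$ slightly \emph{below} $\tfrac14$ (i.e.\ $\|v\|_{X^{1/2+\delta_2,\,1/4-\delta_2/2}}$) together with an $L^6\times L^3$ H\"older split; pushing $\theta\to1$ in~(b) actually worsens the $N_2$-balance, and $\theta=0$ in~(c) gives no decay to sum over $N_2$ (or over $L_2$). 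Note also that the paper's proof of this proposition uses only the Strichartz embeddings $X^{0,\frac13+}\hookrightarrow L^4$, $X^{0,\frac29+}\hookrightarrow L^3$, $X^{0,\frac49+}\hookrightarrow L^6$ and never invokes Kato smoothing. The genuine reason the hypothesis carries $\sup_{0\le\theta\le1}$ is that case~(a) needs the endpoint $\theta=1$; the high--high interaction in case~(b) is not the bottleneck you describe.
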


\begin{proof}  We only prove estimate \eqref{bilin1} in the case $s=1$, since the case $s>1$ follows by similar arguments due to the frequency localization on the functions $w$ and $v$.  By duality, estimate \eqref{bilin1} is equivalent to
\begin{equation} \label{bilin2}
\big|I\big| \lesssim \Big(\sum_N\sup_L\|h_{N,L}\|_{L^2_{\xi,\tau}}^2\Big)^{\frac12}\|w\|_{X^{1,\frac12,1}}\sup_{0
\le \theta \le 1}\|v\|_{X^{1-2\theta,\theta}},
\end{equation}
where
\begin{equation} \label{bilin3}
I=\sum_{N,L}\langle N \rangle \langle L \rangle^{-\frac12}
\int_{\mathcal{D}}\xi h_{N,L}(\xi,\tau)
\phi_N(\xi)\psi_L(\xi,\tau)
\widehat{w}(\xi_1,\tau_1)\xi_2 \widehat{v}(\xi_2,\tau_2)d\nu,
\end{equation}
\begin{equation} \label{bilin4}
d\nu=d\xi d\xi_1 d\tau d\tau_1, \quad \xi_2=\xi-\xi_1, \quad \tau_2=\tau-\tau_1,
\end{equation}
\begin{equation} \label{bilin5}
\sigma=\tau-|\xi|\xi+\xi^3, \quad \sigma_i=\tau_i-\xi_i|\xi_i|+\xi_i^3, \ i=1,2,
\end{equation}
and
\begin{equation} \label{bilin6}
\mathcal{D}=\big\{(\xi,\xi_1,\tau,\tau_1) \in \mathbb R^4 \ | \ \xi \ge 1, \ \xi_1 \ge 1 \ \text{and} \ \xi_2 \le 0 \big\}.
\end{equation}
Observe that we always have in $\mathcal{D}$ that
\begin{equation} \label{bilin7}
\xi_1 \ge \xi \ge 1 \quad \text{and} \quad \xi_1 \ge |\xi_2|.
\end{equation}

Then, we obtain by performing dyadic decompositions in $\xi_1$, $\xi_2$, $\sigma_1$ and $\sigma_2$ that
\begin{equation} \label{bilin8}
\begin{split}
I=\sum_{N,N_,N_2}\sum_{L,L_1,L_2}\langle N \rangle &\langle L \rangle^{-\frac12}
\int_{\mathcal{D}} \xi h_{N,L}(\xi,\tau)
\phi_N(\xi)\psi_L(\xi,\tau)
\\ &
\times (P_{N_1}Q_{L_1}w)^{\wedge}(\xi_1,\tau_1)\xi_2 (P_{N_2}Q_{L_2}v)^{\wedge}(\xi_2,\tau_2)d\nu.
\end{split}
\end{equation}
Due to the second identity in \eqref{bilin4} and \eqref{bilin7}, we can always assume that one of the following cases holds:
\begin{enumerate}
\item{}  high-low interaction: $N_1 \sim N$ and $N_2 \le N_1$
\item{} high-high interaction: $N_1 \sim N_2$ and $N \le N_1$.
\end{enumerate}
Moreover, the resonance identity
\begin{equation} \label{bilin9}
\sigma-\sigma_1-\sigma_2=3\xi\xi_2(\xi_1-\frac23)
\end{equation}
holds in $\mathcal{D}$, so that for fixed $N$, $N_1$ and $N_2$, we can always assume that
\begin{equation} \label{bilin10}
L_{max} \sim \max\{L_{med},NN_1N_2\},
\end{equation}
where $L_{max}$, $L_{med}$ and $L_{min}$ denote respectively the maximum, median and minimum of $L$, $L_1$ and $L_2$.

To estimate $I$, we will divide the sum in \eqref{bilin8} depending on the high-low or high-high interactions regime and on wether $L_{\max}=L,\ L_1$ or $L_2$.  \\

\noindent \textit{Case high-low interaction and $L_{\max}=L_2$.} In this case, we can estimate $I$ as
\begin{displaymath}
\begin{split}
|I| \lesssim \sum_{N_1, N_2 \le N_1}\sum_{L_2, L \le L_2}\frac{\langle N_1 \rangle \langle L \rangle^{-\delta}}{\langle L_2 \rangle}
&\int_{\mathcal{D}} \frac{|\xi\xi_2|\langle \xi_2\rangle}{\langle \xi_1 \rangle} \frac{|h_{N_1,L}(\xi,\tau)|}{\langle \sigma \rangle^{\frac12-\delta}}
\phi_{N_1}(\xi)\psi_L(\xi,\tau) \\ &
\times |(P_{N_1}J_x^1w)^{\wedge}(\xi_1,\tau_1)| \frac{\langle \sigma_2 \rangle}{\langle \xi_2 \rangle} |(P_{N_2}Q_{L_2}v)^{\wedge}(\xi_2,\tau_2)|d\nu.
\end{split}
\end{displaymath}
Since $L_2=L_{max}$, we deduce from \eqref{bilin9} that $L_2 \ge N_1^2N_2$. Therefore, $L_2 \sim 2^kN_1^2N_2$ for $k \in \mathbb Z_+$, so that, we obtain by using Plancherel's identity and H\"older's inequality
\begin{displaymath}
\begin{split}
|I| &\lesssim \sum_{N_1, N_2\le N_1}\frac{N_2}{N_1}
\sup_{L}\big\| \Big(\frac{| h_{N_1,L}|}{\langle\sigma\rangle^{\frac12-\delta}}\Big)^{\vee}\big\|_{L^4_{x,t}}
\big\|\big(\big|(P_{N_1}J_x^1w)^{\wedge} \big|\big)^{\vee}\big\|_{L^4_{x,t}} \\ & \quad \quad \quad \quad \times\sum_{k \in \mathbb Z_+}2^{-k}\|\frac{\langle \sigma \rangle}{\langle \xi \rangle}(P_{N_2}Q_{2^kN_1^2N_2}v)^{\wedge}\|_{L^2_{\xi,\tau}}. \\
& \lesssim \sum_{N_1}\sup_{L}\|h_{N_1,L}\|_{L^2_{\xi,\tau}}
\big\|\big(\big|(P_{N_1}J_x^1w)^{\wedge} \big|\big)^{\vee}\big\|_{L^4_{x,t}}\|v\|_{X^{-1,1}} \\ &
\lesssim \Big(\sum_{N_1}\sup_{L}\|h_{N_1,L}\|_{L^2_{\xi,\tau}}^2 \Big)^{\frac12}
\Big(\sum_{N_1}\big\|\big(\big|(P_{N_1}J_x^1w)^{\wedge} \big|\big)^{\vee}\big\|_{L^4_{x,t}}^2 \Big)^{\frac12}\|v\|_{X^{-1,1}},
\end{split}
\end{displaymath}
which, combined to estimates \eqref{strichartz7} and \eqref{propBourgain1} leads to estimate \eqref{bilin2} in this case. Note that we have used here any $0<\delta<\frac16-$, since $X^{0,\frac13+} \hookrightarrow L^4_{x,t}$ due to estimate \eqref{strichartz7}. \\

\noindent \textit{Case high-high interaction and $L_{\max}=L_2$.} This case works exactly as in the precedent case, summing in $N \le N_1$ instead of $N_2 \le N$.  \\

\noindent \textit{Case high-low interaction and $L_{\max}=L_1$.}  We obtain from the frequency localization properties that
\begin{displaymath}
\begin{split}
|I| \lesssim \sum_{N_1, N_2 \le N_1}\sum_{L_1, L \le L_1}&\frac{\langle N_1 \rangle \langle L \rangle^{-\delta}\langle L_1 \rangle^{\frac12}}{\langle L_1 \rangle^{\frac12}}
\int_{\mathcal{D}} |\xi\xi_2^{\frac12-\delta_2}| \frac{|h_{N_1,L}(\xi,\tau)|}{\langle \sigma \rangle^{\frac12-\delta}}
\phi_{N_1}(\xi)\psi_L(\xi,\tau) \\ &
\times |(P_{N_1}Q_{L_1}w)^{\wedge}(\xi_1,\tau_1)|  |(P_{N_2}D_x^{\frac12+\delta_2}v)^{\wedge}(\xi_2,\tau_2)|d\nu,
\end{split}
\end{displaymath}
in this case.  Therefore, it follows from \eqref{bilin9}, estimate \eqref{strichartz7}, Plancherel's identity and H\"older's inequality that
\begin{equation} \label{bilin11}
\begin{split}
|I| &\lesssim \sum_{N_1, N_2\le N_1}N_2^{-\delta_2}
\sup_{L}\big\| \Big(\frac{| h_{N_1,L}|}{\langle\sigma\rangle^{\frac12-\delta}}\Big)^{\vee}\big\|_{L^6_{x,t}} \\ & \quad \times
\sum_{L_1}\langle N_1\rangle \langle L_1 \rangle^{\frac12}\|P_{N_1}Q_{L_1}w\|_{L^2_{x,t}}\|\big(|(P_{N_2}D^{\frac12+\delta_2}_xv)^{\wedge}|\big)^{\vee}\|_{L^3_{x,t}}\\  &
\lesssim \Big(\sum_{N_1}\sup_{L}\big\| \Big(\frac{| h_{N_1,L}|}{\langle\sigma\rangle^{\frac12-\delta}}\Big)^{\vee}\big\|_{L^6_{x,t}}^2\Big)^{\frac12}
\|w\|_{X^{1,\frac12,1}}\Big(\sum_{N_2}\|\big(|(P_{N_2}D^{\frac12+\delta_2}_xv)^{\wedge}|\big)^{\vee}\|_{L^3_{x,t}}^2\Big)^{\frac12},
\end{split}
\end{equation}
where $\delta$ and $\delta_2$ are two small positive numbers. Now, we have from \eqref{strichartz7} that the injection
$X^{0,\frac29+} \hookrightarrow L^3_{x,t}$ is continuous. Then, we can choose $\delta_2$ postive small enough such that
\begin{equation} \label{bilin12}
\Big(\sum_{N_2}\|\big(|(P_{N_2}D^{\frac12+\delta_2}_xv)^{\wedge}|\big)^{\vee}\|_{L^3_{x,t}}^2\Big)^{\frac12} \lesssim \|v\|_{X^{\frac12+\delta_2,\frac14-\frac{\delta_2}2}} \lesssim \sup_{0 \le \theta \le 1}
\|v\|_{X^{1-2\theta,\theta}}.
\end{equation}
Estimate \eqref{strichartz7} also implies $X^{0,\frac49+} \hookrightarrow L^6_{x,t}$, so that
\begin{equation} \label{bilin13}
\Big(\sum_{N_1}\sup_{L}\big\| \Big(\frac{| h_{N_1,L}|}{\langle\sigma\rangle^{\frac12-\delta}}\Big)^{\vee}\big\|_{L^6_{x,t}}^2\Big)^{\frac12} \lesssim  \Big(\sum_{N_1}\sup_{L}\| h_{N_1,L}\|_{L^2_{\xi,\tau}}^2\Big)^{\frac12},
\end{equation}
for $0<\delta$ and small enough. We deduce estimate \eqref{bilin2} in this case gathering \eqref{bilin11}--\eqref{bilin13}. \\

\noindent \textit{Case high-high interaction and $L_{\max}=L_1$.}  We proceed exactly as in the precedent case. $I$ can be estimate as
\begin{displaymath}
\begin{split}
|I| &\lesssim \sum_{N_1, N \le N_1}\sum_{L_1, L \le L_1}\frac{\langle N \rangle \langle L \rangle^{-\delta}\langle L_1 \rangle^{\frac12}}{\langle L_1 \rangle^{\frac12}}
\int_{\mathcal{D}} |\xi\xi_2^{\frac12}| \frac{|h_{N,L}(\xi,\tau)|}{\langle \sigma \rangle^{\frac12-\delta}}
\phi_{N}(\xi)\psi_L(\xi,\tau) \\ &
\quad \quad \quad \quad \quad \quad \quad \quad \times |(P_{N_1}Q_{L_1}w)^{\wedge}(\xi_1,\tau_1)|  |(P_{N_1}D_x^{\frac12}v)^{\wedge}(\xi_2,\tau_2)|d\nu \\ &
\lesssim \sum_{N_1, N\le N_1}\big(\frac{N}{N_1}\big)^{\frac12}
\sup_{L}\big\| \Big(\frac{| h_{N,L}|}{\langle\sigma\rangle^{\frac12-\delta}}\Big)^{\vee}\big\|_{L^6_{x,t}} \\ & \quad \quad \quad \quad \quad \quad \quad \quad \times
\sum_{L_1}\langle N_1\rangle \langle L_1 \rangle^{\frac12}\|P_{N_1}Q_{L_1}w\|_{L^2_{x,t}}\|\big(|(P_{N_1}D^{\frac12}_xv)^{\wedge}|\big)^{\vee}\|_{L^3_{x,t}} \\  &
\lesssim \Big(\sum_{N}\sup_{L}\big\| \Big(\frac{| h_{N,L}|}{\langle\sigma\rangle^{\frac12-\delta}}\Big)^{\vee}\big\|_{L^6_{x,t}}^2\Big)^{\frac12}
\|w\|_{X^{1,\frac12}}\Big(\sum_{N_1}\|\big(|(P_{N_1}D^{\frac12+\delta_2}_xv)^{\wedge}|\big)^{\vee}\|_{L^3_{x,t}}^2\Big)^{\frac12},
\end{split}
\end{displaymath}
which yields estimate \eqref{bilin2} in this case, recalling \eqref{bilin12} and \eqref{bilin13}.  \\

\noindent \textit{Case high-low interaction and $L_{\max}=L$.} From \eqref{bilin10}, we can always assume that $L \sim (NN_1N_2)^{\frac12}$ in this case, since when $L \sim L_{med}$ we are in one of the precedent cases. Therefore, for fixed $N$, $N_1$ and $N_2$, we only have a finite number of terms in the sum in $L$ appearing on \eqref{bilin8}, so that the following estimate holds in this case,
\begin{displaymath}
\begin{split}
|I| &\lesssim \sum_{N_1, N_2 \le N_1}\frac{\langle N_1 \rangle}{(N_1^2N_2)^{\frac12}}
\int_{\mathcal{D}} |\xi\xi_2^{\frac12-\delta_2}| |h_{N_1,L}(\xi,\tau)|\phi_{N_1}(\xi)\psi_L(\xi,\tau) \\ &
 \quad \quad \quad \quad \quad \quad \quad \quad \times |(P_{N_1}w)^{\wedge}(\xi_1,\tau_1)|  |(P_{N_2}D_x^{\frac12+\delta_2}v)^{\wedge}(\xi_2,\tau_2)|d\nu \\ &
 \lesssim \sum_{N_1, N_2\le N_1}N_2^{-\delta_2}
\sup_{L}\|  h_{N_1,L}\big\|_{L^2_{\xi,\tau}} \|\big(|(P_{N_1}J_x^1w)^{\wedge}|\big)^{\vee}\|_{L^6_{x,t}}\|\big(|(P_{N_2}D^{\frac12+\delta_2}_xv)^{\wedge}|\big)^{\vee}\|_{L^3_{x,t}} \\  &
\lesssim \Big(\sum_{N_1}\sup_{L}\|  h_{N_1,L}\|_{L^2_{\xi,\tau}}^2\Big)^{\frac12}
\|w\|_{X^{1,\frac12,1}}\|v\|_{X^{\frac12+\delta_2,\frac14-\frac{\delta_2}2}}.
\end{split}
\end{displaymath}
This proves estimate \eqref{bilin2} in this case. \\

\noindent \textit{Case high-high interaction and $L_{\max}=L$.} This case can be treated combining the ideas used for the \textit{high-high interaction and $L_{\max}=L_1$ case} and the \textit{high-low interaction and $L_{\max}=L$} case.
\end{proof}

\begin{proposition} \label{bilina} 
Let $0 < T \le 1$, $s \ge 1$, $v\in L^{\infty}(\mathbb R;L^2(\mathbb R)) \cap L^4(\mathbb R;W^{1,4}(\mathbb R))$ and $w \in X^{1,\frac12,1}$ supported in the time interval $[0,2T]$. Then, it holds that
\begin{equation} \label{bilina1} 
\|\partial_xP_{+hi}\big(WP_-v_x\big)\|_{X^{s,-\frac12,1}} \lesssim 
\|w\|_{X^{s,\frac12,1}}\|\partial_xv\|_{L^4_{x,t}},
\end{equation}
\begin{equation} \label{bilina2} 
\|\partial_xP_{+hi}\big(WP_-\partial_x(v^2)\big)\|_{X^{s,-\frac12,1}} \lesssim 
\|w\|_{X^{s,\frac12,1}}\|\partial_xv\|_{L^4_{x,t}}\|v\|_{L^{\infty}_tH^1_x},
\end{equation} 
and
\begin{equation} \label{bilina3}
\begin{split}
\|\partial_xP_{+hi}\big(P_{lo}(e^{iF})P_-v_x\big)&\|_{X^{s,-\frac12,1}} 
+\|\partial_xP_{+hi}\big(P_{lo}(e^{iF}v)P_-v_x\big)\|_{X^{s,-\frac12,1}} \\&
+\|\partial_xP_{+hi}\big(P_{lo}(e^{iF})P_-\partial_x(v^2)\big)\|_{X^{s,-\frac12,1}}\lesssim 
\|v\|_{L^4_{x,t}}^2.
\end{split}
\end{equation}
\end{proposition}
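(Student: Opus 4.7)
The overarching strategy is to bound each left-hand side in the auxiliary norm $L^{4/3}_t H^s_x$ and then transfer to the Bourgain space via the chain
\begin{equation*}
L^{4/3}_t H^s_x \hookrightarrow X^{s,-1/4}_{1} \hookrightarrow X^{s,-1/2,1}_{1}.
\end{equation*}
The first inclusion is Proposition \ref{propBourgain}, estimate \eqref{propBourgain1b}, with $\delta'=1/3$ and $\delta=1/4$; the second follows from Cauchy--Schwarz in the dyadic modulation index: for any $\delta>0$, one has $\sum_L \langle L\rangle^{-1/2}a_L \lesssim (\sum_L \langle L\rangle^{-1+2\delta}a_L^2)^{1/2}$ since $\sum_L \langle L\rangle^{-2\delta}$ converges.

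For \eqref{bilina1} and \eqref{bilina2}, the key tool is Lemma \ref{lemma2}. Applying it with $\alpha=s+1$, $\alpha_1=s+1$, $\alpha_2=1$, $q_1=2$, $q_2=4$, $q=4/3$ and using the boundedness of $P_{hi}$ on $L^{4/3}_{x,t}$, one finds
\begin{equation*}
\|D^{s+1}_x P_{+hi}(W\cdot P_-\partial_x g)\|_{L^{4/3}_{x,t}} \lesssim \|D^{s+1}_x W\|_{L^2_{x,t}}\|\partial_x g\|_{L^4_{x,t}}.
\end{equation*}
Since $W$ is supported in $[0,2T]$ with $T\leq 1$, $\|D^{s+1}_x W\|_{L^2_{x,t}}\lesssim \|D^{s+1}_x W\|_{L^\infty_t L^2_x}$; and since $W=P_{+hi}(e^{iF})$ is Fourier-supported on positive high frequencies where $\widehat W(\xi)=(i\xi)^{-1}\widehat w(\xi)$, one obtains $\|D^{s+1}_x W\|_{L^\infty_t L^2_x}\sim \|w\|_{L^\infty_t H^s_x}\lesssim \|w\|_{X^{s,1/2,1}}$ via \eqref{propBourgain2}. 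This yields \eqref{bilina1} when $g=v$; and \eqref{bilina2} follows with $g=v^2$ together with the bound $\|\partial_x(v^2)\|_{L^4_{x,t}}\lesssim \|v\|_{L^\infty_{x,t}}\|\partial_x v\|_{L^4_{x,t}}\lesssim \|v\|_{L^\infty_t H^1_x}\|\partial_x v\|_{L^4_{x,t}}$ arising from the Sobolev embedding $H^1\hookrightarrow L^\infty$.

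For \eqref{bilina3}, the crucial observation is that each of the three operators $P_{+hi}(P_{lo}(\cdot)P_-(\cdot))$ produces output whose Fourier support is confined to the bounded interval $\xi\in[1,2]$: with $\xi_1\in\mathrm{supp}(\phi_{lo})\subset[-2,2]$ coming from the low-frequency projection, $\xi_2\leq 0$ from $P_-$, and $\xi=\xi_1+\xi_2\geq 1$ imposed by $P_{+hi}$, one is forced into $\xi_1\in[1,2]$ and $\xi_2\in[-1,0]$. Consequently the outer $\partial_x$ contributes only a bounded factor, Sobolev norms in $\xi$ collapse to $L^2$, and each input factor may effectively be replaced by its Fourier restriction to the appropriate bounded interval. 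H\"older's inequality $\|fg\|_{L^2_x}\leq \|f\|_{L^4_x}\|g\|_{L^4_x}$ combined with Bernstein-type estimates --- in particular $\|P_{[-1,0]}v_x\|_{L^4_x}\lesssim \|v\|_{L^4_x}$, $\|P_{[1,2]}(e^{iF})\|_{L^4_x}\lesssim \|v\|_{L^4_x}$ (obtained from $\partial_x e^{iF}=iAve^{iF}$ and the fact that $|\xi|\sim 1$ on the support), $\|P_{[1,2]}(e^{iF}v)\|_{L^4_x}\leq \|v\|_{L^4_x}$, $\|P_{[1,2]}e^{iF}\|_{L^\infty_x}\lesssim 1$, and $\|P_{[-1,0]}\partial_x(v^2)\|_{L^2_x}\lesssim \|v\|_{L^4_x}^2$ --- reduces each of the three terms to a pointwise-in-time estimate of the form $\|(\cdot)(\cdot,t)\|_{L^2_x}\lesssim \|v(\cdot,t)\|_{L^4_x}^2$. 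Integration in time followed by the embedding above gives the desired control by $\|v\|_{L^4_{x,t}}^2$.

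The principal technical obstacle lies in the passage from $W=P_{+hi}(e^{iF})$ to $w=W_x$ in \eqref{bilina1}--\eqref{bilina2}, which hinges on the Fourier-support structure of $W$ to obtain the equivalence $\|D^{s+1}_x W\|_{L^\infty_t L^2_x}\sim \|w\|_{L^\infty_t H^s_x}$; and, for \eqref{bilina3}, in recognizing the frequency-support collapse that makes the lifting from the Lebesgue $L^{4/3}_t H^s_x$ estimate to the Bourgain norm $X^{s,-1/2,1}$ transparent despite the delicate negative-modulation regime.
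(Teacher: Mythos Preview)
Your overall strategy---pass to $L^{p}_tH^s_x$ via \eqref{propBourgain1b} plus Cauchy--Schwarz in the modulation index, then invoke Lemma~\ref{lemma2}---is exactly the paper's. However, your specific H\"older split in Lemma~\ref{lemma2} for \eqref{bilina1}--\eqref{bilina2} does not close. With $(q,q_1,q_2)=(4/3,2,4)$ you obtain
\[
\|D_x^{s+1}P_{+hi}(WP_-\partial_xg)\|_{L^{4/3}_{x,t}}\lesssim \|D_x^{s+1}W\|_{L^2_{x,t}}\|\partial_xg\|_{L^4_{x,t}},
\]
but the space on the left is $L^{4/3}_x$ in the spatial variable, whereas the $H^s_x$ norm you announced in your first display requires $L^2_x$; there is no embedding $L^{4/3}_x\hookrightarrow L^2_x$, so the transfer to $X^{s,-\frac12,1}$ fails as written.

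The fix is to keep the spatial output in $L^2_x$: apply Lemma~\ref{lemma2} with $(q,q_1,q_2)=(2,4,4)$ and the same $(\alpha,\alpha_1,\alpha_2)=(s+1,s+1,1)$ to get
\[
\|D_x^{s+1}P_{+hi}(WP_-v_x)\|_{L^2_x}\lesssim \|D_x^{s+1}W\|_{L^4_x}\|\partial_xv\|_{L^4_x}\sim \|D_x^{s}w\|_{L^4_x}\|\partial_xv\|_{L^4_x},
\]
then use H\"older in time (the support in $[0,2T]$ absorbs the power of $T$) to reach $L^{1+}_tL^2_x$ or $L^{4/3}_tL^2_x$. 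The remaining step is to control $\|D_x^sw\|_{L^4_{x,t}}$, which the paper handles by the Strichartz embedding $X^{0,\frac13+}_{\epsilon}\hookrightarrow L^4_{x,t}$ from \eqref{strichartz7}, so that $\|D_x^sw\|_{L^4_{x,t}}\lesssim \|w\|_{X^{s,\frac13+}}\lesssim \|w\|_{X^{s,\frac12,1}}$. Your alternative route through $\|D_x^{s+1}W\|_{L^2_{x,t}}\lesssim \|w\|_{L^\infty_tH^s_x}$ never actually enters, because the exponent $q_1=2$ is incompatible with $q=2$.

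For \eqref{bilina3}, your frequency-collapse observation that the output is supported in $\xi\in[1,2]$ and the inputs in $|\xi_1|,|\xi_2|\lesssim 1$ is precisely the mechanism the paper uses (it writes $\partial_xP_{+hi}(P_{lo}(e^{iF})P_-v_x)=\partial_xP_{+LO}(P_{lo}(e^{iF})P_{-LO}v_x)$ and then applies Bernstein plus \eqref{lemma2.1}); your treatment of this part is correct and matches the paper's.
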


\begin{proof} 
We begin with the proof of estimate \eqref{bilina1}. We deduce from the Cauchy-Schwarz inequality and estimate \eqref{propBourgain1b} that
\begin{displaymath} 
\begin{split}
\|\partial_xP_{+hi}\big(WP_-v_x\big)\|_{X^{s,-\frac12,1}} & \lesssim 
\|\partial_xP_{+hi}\big(WP_-v_x\big)\|_{L^{1+}_tH^s_x} \\ 
& \lesssim \|\partial_xP_{+hi}\big(WP_-v_x\big)\|_{L^{1+}_tL^2_x}
+\|D_x^s\partial_xP_{+hi}\big(WP_-v_x\big)\|_{L^{1+}_tL^2_x}.
\end{split}
\end{displaymath}
Thus, it follows applying estimate \eqref{lemma2.1} and H\"older's inequality in time that 
\begin{displaymath} 
\|\partial_xP_{+hi}\big(WP_-v_x\big)\|_{X^{s,-\frac12,1}}  \lesssim 
T^{\frac12-}\big(\|w\|_{L^4_{x,t}}+\|D_x^sw\|_{L^4_{x,t}} \big)\|v_x\|_{L^4_{x,t}},
\end{displaymath}
which proves estimate \eqref{bilina1} since $X^{0,\frac12,1} \hookrightarrow X^{0,\frac13+} \hookrightarrow L^4_{x,t}$ by combining \eqref{propBourgain1} and \eqref{strichartz7}. 
Similar arguments combined to the Sobolev embedding $H^1(\mathbb R) \hookrightarrow L^{\infty}(\mathbb R)$ imply estimate \eqref{bilina2}. 

Finally, we turn to the proof of estimate \eqref{bilina3}. We will only bound the first term on the left-hand side, since the other ones can be treated exactly by the same way.   The Cauchy-Schwarz inequality and estimate \eqref{propBourgain1b} imply that
\begin{displaymath} 
\begin{split}
\|\partial_x&P_{+hi}\big(P_{lo}(e^{iF})P_-v_x\big)\|_{X^{s,-\frac12,1}} \\ &\lesssim 
 \|\partial_xP_{+hi}\big(P_{lo}(e^{iF})P_-v_x\big)\|_{L^{1+}_tL^2_x}
+\|D_x^s\partial_xP_{+hi}\big(P_{lo}(e^{iF})P_-v_x\big)\|_{L^{1+}_tL^2_x}.
\end{split}
\end{displaymath}
On the other hand, we get from the frequency localization that 
\begin{displaymath} 
\partial_xP_{+hi}\big(P_{lo}(e^{iF})P_-v_x\big)=\partial_xP_{+LO}\big(P_{lo}(e^{iF})P_{-LO}v_x\big).
\end{displaymath}
Therefore, we deduce from Bernstein's inequalities and estimate \eqref{lemma2.1} that 
\begin{displaymath} 
\begin{split}
\|\partial_xP_{+hi}\big(P_{lo}(e^{iF})P_-v_x\big)\|_{X^{s,-\frac12,1}} \lesssim 
T^{\frac12-}\|\partial_xe^{iF}\|_{L^4_{x,t}}\|v\|_{L^4_{x,t}} \lesssim \|v\|_{L^4_{x,t}}^2,
\end{split}
\end{displaymath}
recalling that $\partial_xF=Av$. This concludes the proof of Proposition \ref{bilina}.
\end{proof}

\begin{proposition} \label{bilinb}
Let $1 \le s \le \frac32$, $0 \le T \le 1$, $v$ a solution to \eqref{hoBO} which belongs to $L^{\infty}(\mathbb R;L^2(\mathbb R)) \cap L^2(\mathbb R; L^{\infty}[0,T])$, such that $J_x^s\partial_xv \in \widetilde{L^{\infty}_xL^{2}_T}$ and  supported in the time interval $[0,2T]$. Then, it holds that
\begin{equation} \label{bilinb1} 
\|\partial_xP_{+hi}\big(e^{iF}v^2\big)\|_{X^{s,-\frac12,1}} \lesssim 
p_1( \|v\|_{L^{\infty}_TH^1_x}) \|v\|_{L^{\infty}_TH^1_x} \|v\|_{L^{\infty}_TH^s_x} +
 \|v\|_{L^2_x L^\infty_T}\|J^s_x \partial_xv \|_{ \widetilde{L^\infty_x L^2_T}}  
\end{equation}
and
\begin{equation} \label{bilinb2} 
\|\partial_xP_{+hi}\big(e^{iF}v^3\big)\|_{X^{s,-\frac12,1}} \lesssim p_2 ( \|v\|_{L^{\infty}_TH^1_x}) \|v\|_{L^{\infty}_TH^1_x} \Bigl(\|v\|_{L^{\infty}_TH^s_x} +
 \|v\|_{L^2_x L^\infty_T}\|J^s_x\partial_xv \|_{ \widetilde{L^\infty_x L^2_T}} \Bigr)  ,
\end{equation} 
where $p_1$ and $p_2$ are polynomial  functions.
\end{proposition}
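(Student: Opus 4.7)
My plan is to reduce both estimates, via Littlewood-Paley frequency decomposition, to combinations of two dual estimates for the $X^{s,-\frac12,1}_\epsilon$ norm. The first is the embedding
\[
L^{1+\delta'}_tH^s_x\hookrightarrow X^{s,-\frac12+\delta}_\epsilon\hookrightarrow X^{s,-\frac12,1}_\epsilon
\]
supplied by \eqref{propBourgain1b}--\eqref{propBourgain1}; this will be responsible for the energy-type contribution $p_1(\|v\|_{L^\infty_TH^1_x})\|v\|_{L^\infty_TH^1_x}\|v\|_{L^\infty_TH^s_x}$. The second is the dual of the Kato smoothing estimate from Lemma~\ref{smoothing}: at a dyadic spatial frequency $N\gtrsim 1$ it lets me control $\|P_N\partial_xG\|_{X^{0,-\frac12,1}_T}$ by a norm morally equivalent to $\|P_NG\|_{L^1_xL^2_T}$, and it is this estimate that will produce the Kato/maximal contribution $\|v\|_{L^2_xL^\infty_T}\|J^s_x\partial_xv\|_{\widetilde{L^\infty_xL^2_T}}$. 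Throughout, the gauge factor $e^{iF}$ will be handled via Lemma~\ref{lemma1} together with the trivial pointwise bound $|e^{iF}|=1$.

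For \eqref{bilinb1} I plan to write
\[
\partial_xP_{+hi}(e^{iF}v^2)=\sum_{N\ge 2}P_N\partial_x(e^{iF}v^2)
\]
and, inside each $P_N$, further decompose the product via paraproducts into three interaction regimes: (i) $P_{+hi}(e^{iF})$ at frequency $\sim N$ paired with $v^2$ at frequency $\lesssim N$; (ii) $e^{iF}$ at frequency $\lesssim N$ paired with $v^2$ at frequency $\sim N$; and (iii) high-high interactions. In regime (i) the key move will be the algebraic identity $\partial_xe^{iF}=iAve^{iF}$, which gives $P_N(e^{iF})=\frac{A}{N}P_N(ve^{iF})$ for $N\ge 2$; the gain of $N^{-1}$ cancels the $N$ coming from the outer $\partial_x$ and transforms the contribution into one of essentially $e^{iF}v^3$ shape. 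Bounding this in $L^{1+}_tH^s_x$ via Lemma~\ref{lemma1}, the embedding $H^1\hookrightarrow L^\infty$ and H\"older in time will yield the first term on the right-hand side of \eqref{bilinb1}. In regimes (ii) and (iii) the high frequency comes from $v^2=v\cdot v$; after paraproducting, I plan to place the outer $\partial_x$ together with the $J^s_x$ weight on the $v$-factor at frequency $\sim N$, so as to produce $J^s_x\partial_xv$, bound the remaining $v$ in $L^2_xL^\infty_T$ and the gauge in $L^\infty_{t,x}$, and conclude via the dual Kato estimate combined with the H\"older chain
\[
\|v\cdot J^s_x\partial_xv\|_{L^1_xL^2_T}\le\|v\|_{L^2_xL^\infty_T}\,\|J^s_x\partial_xv\|_{L^2_xL^2_T}\lesssim T^{1/2}\|v\|_{L^2_xL^\infty_T}\|J^s_x\partial_xv\|_{\widetilde{L^\infty_xL^2_T}}.
\]

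Estimate \eqref{bilinb2} will follow by exactly the same strategy applied to $v^3=v\cdot v^2$: the additional factor of $v$ is extracted in $L^\infty_{t,x}$ through $H^1\hookrightarrow L^\infty$, which accounts for the extra $\|v\|_{L^\infty_TH^1_x}$ appearing in its right-hand side compared with \eqref{bilinb1}. The main technical obstacle I anticipate is the bookkeeping in regime (i), where the division by $N$ must be carried out in a manner compatible with the $\ell^1$-summation in the modulation variable $L$ built into the $X^{s,\frac12,1}_\epsilon$ norm, and where Besov-type frequency sums must be absorbed uniformly through Bernstein inequalities when the $N^s$ weight is transferred between factors. A secondary subtlety will be regime (iii), in which the $P_{+hi}$ projection forces frequency balance between the two high copies of $v$: one must use this balance to share the outer derivative and the $N^s$ weight between them so that on each side only norms already listed on the right-hand sides of \eqref{bilinb1}--\eqref{bilinb2} appear.
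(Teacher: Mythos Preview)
Your overall strategy is in the right spirit, but the paper's route is considerably simpler and avoids a genuine difficulty lurking in your plan.

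The paper does not paraproduct $e^{iF}$ against $v^2$ at all. It embeds once and for all into $L^{1+}_TH^s_x$ via \eqref{propBourgain1b} and Cauchy--Schwarz in the modulation variable, applies the product rule $\partial_x(e^{iF}v^2)=iA\,e^{iF}v^3+e^{iF}\partial_x(v^2)$, and then uses Lemma~\ref{lemma1} to strip off the gauge factor in $H^s$. This already yields the energy term from $\|v^3\|_{L^{1+}_TH^s_x}$ and reduces everything else to controlling $\|\partial_x(v^2)\|_{L^2_TH^s_x}$. For the high-frequency part $P_{HI}\partial_x(v^2)$, the paper paraproducts only the product $v\cdot v$, uses H\"older in \emph{physical} space $\|P_{2^j}v\cdot P_{\le 2^{j-1}}v\|_{L^2_{T,x}}\le\|P_{2^j}v\|_{L^\infty_xL^2_T}\|v\|_{L^2_xL^\infty_T}$, and closes the dyadic sum with Young's inequality. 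The Kato/maximal combination therefore appears purely inside an $L^2_TH^s_x$ norm; no Bourgain-space dual estimate is ever invoked.

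This matters because your ``dual Kato'' step, as stated, does not follow from Lemma~\ref{smoothing}. Estimate \eqref{smoothing1} maps $X^{0,\frac12,1}\to L^\infty_xL^2_t$; since the dual of $\ell^1$ in the modulation variable is $\ell^\infty$, duality only gives $\|P_N\partial_xG\|_{X^{0,-\frac12,\infty}}\lesssim\|P_NG\|_{L^1_xL^2_T}$, which is strictly weaker than the $X^{0,-\frac12,1}$ control you claim. You flagged the $\ell^1$ bookkeeping as an obstacle in regime~(i), but the same issue is present---and not merely a bookkeeping nuisance---in the dual-Kato step for regimes (ii)--(iii). The cure is exactly the paper's move: pass to $L^{1+}_TH^s_x$ first, so that no $\ell^1_L$ structure survives, and then run the physical-space H\"older/Young argument above. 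Your three-regime paraproduct of $e^{iF}$ versus $v^2$ then becomes unnecessary, since the product rule plus Lemma~\ref{lemma1} already dispatches the gauge factor globally.
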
 
\begin{proof}
First we notice that according to \eqref{lemma1.1}
\begin{eqnarray*}
\|\partial_xP_{+hi}\big(e^{iF}v^2\big)\|_{X^{s,-\frac12,1}} & \lesssim & \|\partial_xP_{+hi}\big(e^{iF}v^2\big)\|_{L^{1+}_T H^s}\\
& \lesssim & \| e^{iF} v^3\|_{L^{1+}_T  H^s_x}+\| e^{iF}\partial_x ( v^2) \big)\|_{L^{1+}_T H^s_x}\\
& \lesssim &(1+\|v\|_{L^\infty_T H^1_x}^2)( \|v\|_{L^\infty_T H^1_x}^2 \|v\|_{L^\infty_T H^s_x}+  \|\partial_x(v^2)\|_{L^2_T H^s_x}).
\end{eqnarray*}
It thus remains to control $ \|P_{HI} \partial_x(v^2)\|_{L^2_T H^s_x} $ since  the low frequency part of the contribution of $  \partial_x(v^2)$  is easily estimated by 
 $ \|v\|_{L^\infty_T H^1_x}^2 $. To this aim we use the same decomposition as in \eqref{tyty} and write for $ k\ge 4 $, 
$$
P_{2^k}( v^2)= P_{2^k}\Bigl( \sum_{j\ge k-3} P_{2^j} v P_{\le 2^{j-1}} v + \sum_{j\ge k-3 } P_{2^j} v P_{\le2^{j}} v\Bigr) .
$$
Hence,
\begin{eqnarray*}
  \|P_{2^k} \partial_x (v^2)\|_{L^2_T H^s_x} & \lesssim  & 2^{k(s+1)}
   \sum_{j\ge k-2} 2^{-j(s+1)}\|P_{2^j} J^s_x  v_{x} \|_{L^\infty_x L^2_T} \|v\|_{L^2_x L^\infty_{T}} \\
  & \lesssim  & 
  \|v\|_{L^2_x L^\infty_{T}} \sum_{j\ge k-3} 2^{(k-j)(s+1)}\|P_{2^j}J^s_x\partial_x v \|_{L^\infty_x L^2_T},
  \end{eqnarray*}
  and by Young's inequality we obtain that 
  $$
  \|P_{HI} \partial_x (v^2)\|_{L^2_T H^s_x}^2 \lesssim \sum_{k\ge 4}  \|P_{2^k} \partial_x (v^2)\|_{L^2_T H^s_x}^2
   \lesssim \|v\|_{L^2_x L^\infty_{T}}^2 \|J^s_x v_x \|_{\widetilde{L^\infty_x L^2_T}}^2.
   $$
   This completes the proof of \eqref{bilinb1}.
  The proof of  \eqref{bilinb2} follows   a similar way and will thus be omitted.
\end{proof}

We are now in position to give the proof of Proposition \ref{apriori w}. 
\begin{proof}[Proof of Proposition \ref{apriori w}] Let $s \ge 1$, $0<T\le 1$, and $\tilde{v}$ and $\tilde{w}$ be extensions of $v$ and $w$ such that $\|\tilde{v}\|_{X^{1-2\theta,\theta}} \le 2\|v\|_{X^{1-2\theta,\theta}_T}$ for all $0 \le \theta \le 1$ and $\|\tilde{w}\|_{X^{1,\frac12,1}} \le 2\|w\|_{X^{1,\frac12,1}_T}$. By the Duhamel principle, the integral formulation associated to \eqref{eqw} writes 
\begin{displaymath} 
\begin{split}
w(t)=\eta(t)w(0)+\eta(t)\int_0^tN(\eta_Te^{iF},\eta_Tv,\eta_TW,\eta_Tw)(\tau)d\tau,
\end{split}
\end{displaymath} 
where $N(e^{iF},v,W,w)$ is defined in $\eqref{eqw}$, for $0<t\le T \le 1$. Therefore, we deduce gathering estimates 
\eqref{prop1.1.2}, \eqref{prop1.2.1}, \eqref{bilin1} and \eqref{bilina1}--\eqref{bilinb2} that 
\begin{displaymath} 
\begin{split}
\|w\|_{X^{s,\frac12,1}}  & \lesssim \|w(0)\|_{H^s}\\ & +p\big(\|w\|_{X^{s,\frac12,1}}, \|v\|_{L^{\infty}_TH^1_x}, \|v\|_{L^4_TW^{1,4}_x},\|v\|_{L^2_xL^{\infty}_T}, 
\|J^s_x v_x \|_{\widetilde{L^\infty_xL^{2}_T}},
\sup_{0\le \theta \le 1}\|v\|_{X^{1-2\theta,\theta}_T} \big),
\end{split}
\end{displaymath}
where $p$ is polynomial at least quadratic in its arguments.  This concludes the proof of estimate \eqref{apriori w.1}, since 
\begin{displaymath} 
\|w(0)\|_{H^s} \lesssim \|J_x^s\big( e^{-iF[v_0]}v_0\big)\|_{L^2} \lesssim \big(1+\|v_0\|_{H^1}^2\big)\|v_0\|_{H^s},
\end{displaymath}
due to Lemma \ref{lemma1}.
\end{proof}

\section{Proof of Theorem \ref{theo1}}
Without loss of generality, we will fix $\epsilon=1$ in this section. First observe that, unlike to the Benjamin-Ono equation, equation \eqref{hoBO} is not invariant under scaling. However, if $v$ is a solution to the equation \eqref{hoBO} on the time interval $[0,T]$ with initial data $v_0$, then for every $0<\lambda < \infty$, $v_{\lambda}(x,t)=\lambda v(\lambda x,\lambda^3 t)$ is a solution to 
\begin{equation} \label{hoBOscaled}
\partial_tv-b\lambda\mathcal{H}\partial^2_xv-
           a \partial_x^3v=c\lambda v\partial_xv-d
           \partial_x(v\mathcal{H}\partial_xv+\mathcal{H}(v\partial_xv)),
\end{equation} 
on the time interval $[0,\lambda^{-3}T]$. Then, since 
\begin{displaymath} 
\|v_{\lambda}(\cdot,0)\|_{H^1} \sim \lambda^{\frac12}\|v_0\|_{L^2}+\lambda^{\frac32}\|\partial_xv_0\|_{L^2} \lesssim 
\lambda^{\frac12}\|v_0\|_{H^1},
\end{displaymath}
we can always force $v_{\lambda}(\cdot,0)$ to belong to $B_{\alpha}$, where $B_{\alpha}$ is the open ball of $H^1(\mathbb R)$ with radius $0<\alpha \ll 1$ and centered at the origin. Therefore the existence and uniqueness of a solution to \eqref{hoBOscaled} on the time interval $[0,1]$ for small initial data in $H^1(\mathbb R)$ will ensure the existence and uniqueness of a solution to \eqref{hoBO} on the time interval $[0,T]$ with $T\sim \lambda^3 \sim \min\{1,\|v_0\|_{H^1}^{-6}\}$ for arbitrary initial data in $H^1(\mathbb R)$. Moreover, using the conservation of the energy $H$ defined in \eqref{H}, which controls the $H^1$-norm,  will imply the global well-posedness of $\eqref{hoBO}$ in $H^1(\mathbb R)$.

Since all the estimates obtained in the precedent sections are still valid for \eqref{hoBOscaled} with implicit constants independent of $0 < \lambda \le 1$ and for sake of simplicity, we will continue working with equation \eqref{hoBO}, in the case $\epsilon=1$, \textit{i.e.}, 
\begin{equation} \label{hoBOepsilon1}
\partial_tv-b\mathcal{H}\partial^2_xv-
           a \partial_x^3v=c v\partial_xv-d
           \partial_x(v\mathcal{H}\partial_xv+\mathcal{H}(v\partial_xv)),
\end{equation}
instead of equation \eqref{hoBOscaled}. The rest of the proof of Theorem \ref{theo1} follows closely our proof for the Benjamin-Ono equation (see Theorem 1.1 in \cite{MP}). For this reason, we will only give a sketch of it. 

As a consequence of the well-posedness theory for more regular solutions obtained in Theorem 1.3 of \cite{LPP}, we have the following result. 
\begin{proposition} \label{smoothsolutions}
For all $v_0 \in H^{\infty}(\mathbb R)$, there exists a positive time $T=T(\|v_0\|_{H^2}) \in (0,1]$ and a solution $v \in C([0,T];H^{\infty}(\mathbb R))$ to equation \eqref{hoBOepsilon1}. Moreover, $T$ is a nondecreasing function of its argument. 
\end{proposition}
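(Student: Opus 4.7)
The plan is to deduce this proposition directly from Theorem 1.3 of \cite{LPP}, combined with a standard persistence-of-regularity argument. Given $v_0\in H^\infty(\mathbb R)$, in particular $v_0\in H^2(\mathbb R)$, so \cite{LPP} produces a time $T=T(\|v_0\|_{H^2})\in (0,1]$ and a solution $v\in C([0,T];H^2(\mathbb R))$ to \eqref{hoBOepsilon1}. By possibly shrinking the blueprint time one may assume $T$ is defined uniformly for initial data in any ball of $H^2(\mathbb R)$ of radius $R$, and by passing to an infimum over such balls and capping at $1$, one obtains the required monotonicity of $T$ as a function of $\|v_0\|_{H^2}$.

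The task is thus reduced to showing that, on this same time interval $[0,T]$, the solution actually belongs to $C([0,T];H^s(\mathbb R))$ for every $s\ge 2$. For this I would use the standard energy estimate derived from \eqref{hoBOepsilon1}: commuting $J_x^s$ with the equation, multiplying by $J_x^s v$, integrating in space, and using the $L^2$-skew-adjointness of the linear part yields, after handling the nonlinear commutators with fractional Leibniz rules (Proposition \ref{leibrule}) and the Calder\'on commutator estimates, an inequality of Gronwall type
\begin{displaymath}
\frac{d}{dt}\|v(t)\|_{H^s}^2 \lesssim C\bigl(\|v(t)\|_{W^{1,\infty}_x}+\|\partial_x^2 v(t)\|_{L^\infty_x}\bigr)\|v(t)\|_{H^s}^2,
\end{displaymath}
where crucially the constant on the right-hand side only involves $L^\infty$-type norms controlled, via the Sobolev embedding $H^{2}(\mathbb R)\hookrightarrow W^{1,\infty}(\mathbb R)$ (and the slightly sharper $H^{3/2+}\hookrightarrow L^\infty$ for $\partial_x^2 v$), by $\|v(t)\|_{H^2}$ alone after a standard bootstrap. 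In practice the cleanest route is to work on smoothed solutions $v_\delta$ obtained by a regularization of the initial data, derive the $H^s$ estimate for them, and pass to the limit using the uniqueness granted by \cite{LPP}.

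Since Theorem 1.3 of \cite{LPP} ensures that $\|v(t)\|_{H^2}$ stays bounded on $[0,T]$, Gronwall's lemma propagates the finiteness of $\|v(t)\|_{H^s}$ on the entire interval $[0,T]$, i.e.\ no new maximal time shorter than $T$ is introduced at the regularity $s$. Continuity in time at the $H^s$ level then follows from the $H^2$-continuity, the uniform-in-$t$ $H^s$-bound just obtained, and a standard Bona--Smith approximation argument. Iterating this for every $s\ge 2$ and using the definition $H^\infty(\mathbb R)=\bigcap_{s\ge 2}H^s(\mathbb R)$ produces $v\in C([0,T];H^\infty(\mathbb R))$, which proves the proposition.

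The only mild obstacle is checking that the nonlinear terms $d\epsilon\,\partial_x(v\mathcal{H}\partial_x v+\mathcal{H}(v\partial_x v))$ do not force a loss of derivatives in the energy inequality; this is exactly the point where the nonlinear cancellation exhibited in \cite{LPP} enters, and since we are only propagating regularity above $H^2$ (not trying to lower it), the symmetrization used there is sufficient and no gauge transform is needed.
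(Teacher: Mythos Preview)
Your approach is essentially the same as the paper's: the paper gives no proof and simply records the proposition as a direct consequence of Theorem~1.3 of \cite{LPP}. Since \cite{LPP} already establishes local well-posedness in $H^s(\mathbb R)$ for every $s\ge 2$ with a lifespan depending only on $\|v_0\|_{H^2}$, one may in fact apply that result at each regularity level and use uniqueness to conclude $v\in C([0,T];H^\infty)$ without redoing the energy argument; your persistence-of-regularity sketch is a correct but slightly longer route to the same conclusion.

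One small correction worth making: in your displayed energy inequality the coefficient $\|\partial_x^2 v(t)\|_{L^\infty_x}$ cannot be controlled by $\|v(t)\|_{H^2}$, and the parenthetical about $H^{3/2+}\hookrightarrow L^\infty$ applied to $\partial_x^2 v$ would require $v\in H^{7/2+}$, not $H^2$. As you yourself note in the last paragraph, this is exactly where the symmetrization in \cite{LPP} intervenes: after the cancellation between $v\mathcal H\partial_x^2 v$ and $\mathcal H(v\partial_x^2 v)$ in the energy identity, the surviving commutator terms involve only $\|\partial_x v\|_{L^\infty}$-type coefficients, which are indeed controlled by $\|v\|_{H^2}$. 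So the displayed inequality should read with $\|\partial_x v(t)\|_{L^\infty_x}$ (or simply $\|v(t)\|_{H^2}$) in place of $\|\partial_x^2 v(t)\|_{L^\infty_x}$; with that adjustment the argument closes.
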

Note however that at this point, we still do not know wether those solutions are global or not.

The first step is to obtained \textit{a priori} estimates for those smooth solutions. Let 
$v_0 \in B_{\alpha} \cap H^{\infty}(\mathbb R)$ be given.  Here, we recall that $B_{\alpha}=\{\phi \in H^1(\mathbb R) \ : \ \|\phi\|_{H^1} \le \alpha\}$, where $\alpha>0$ will be choosen sufficiently small. For any $s \ge 1$, we will also denote 
\begin{equation} \label{N} 
N^s_T(v)=\max\big\{\|v\|_{L^{\infty}_TH^s_x}, \|J^s_xv\|_{L^4_{x,T}}, \|v\|_{L^2_xL^{\infty}_T},\|J^s_x\partial_xv\|_{\widetilde{L^{\infty}_xL^2_T}},\|w\|_{X^{s,\frac12,1}_T}, \big\}.
\end{equation}
Then, estimates \eqref{apriori v.1}--\eqref{apriori v.14b} and \eqref{apriori w.1} yield
\begin{equation} \label{theo1.3}
N^s_T(v) \lesssim (1+\|v_0\|_{H^1}^2)\|v_0\|_{H^s}+q(N^1_T(v))N^s_T(v),
\end{equation}
for any $s \ge 1$, where $q$ is a polynomial with no constant term. By continuity, estimate \eqref{theo1.3} for  $s=1$ ensures that there exist two positive constants $\alpha_1$ and $C_1$ such that $N^1_T(v) \le C_1\alpha$, provided $v_0 \in B_{\alpha}$ with $0<\alpha<\alpha_1$. Moreover, using estimate \eqref{theo1.3} again implies that 
\begin{equation} \label{theo1.4} 
\|v\|_{L^{\infty}_TH^s_x} \le N^s_T(v) \lesssim \|v_0\|_{H^s},
\end{equation}
for any $s \ge 1$, provided $\|v_0\|_{H^1} \le \alpha <\alpha_1$. Therefore, by using estimate \eqref{theo1.4} for $s=2$, we can reapply the result of Proposition \ref{smoothsolutions} a finite number of time to extend the solution $v$ to the interval $[0,1]$, as soon as $\|v_0\|_{H^1}$ is small enough. We observe that the scaling argument explained above and the control of the $H^1$-norm by the Hamiltonian $H$ defined in \eqref{H}, allow to extend our solution $v$ globally in time, so that $v \in C(\mathbb R;H^{\infty}(\mathbb R))$.

To prove the uniqueness as well as the continuity of the flow map, we follow our argument in the proof of Theorem 1.1 of \cite{MP} and derive a Lipschitz bound on the flow map $:H^s(\mathbb R) \rightarrow L^{\infty}([0,1];H^s(\mathbb R))$ for initial data having the same low frequency part, and where $L^{\infty}([0,1];H^s(\mathbb R))$ is considered with the norm $N_1^s$ defined in \eqref{N}. The idea is to apply similar estimates to \eqref{apriori v.1}--\eqref{apriori v.14b} and \eqref{apriori w.1} to the difference of two solutions $u:=v_1-v_2$ and to the difference of the gauges $z:=w_1-w_2$. Note that at this point a control on $G:=F[v_1]-F[v_2]$ in $L^{\infty}([0,1]\times \mathbb R)$ is needed and can be obtained exactly as in Lemma 4.1 of \cite{MP} by splitting $G$ between its low and high frequency parts $G:=G_{lo}+G_{hi}$. We use the equation satisfied by $G_{lo}$ and evolving from $0$ to control $G_{lo}$ and Bernstein's inequalities to control $G_{hi}$.

Finally to prove the existence in $H^s(\mathbb R)$, for initial data and the continuity of the flow map, we fix an initial data $v_0 \in B_{\alpha} \cap H^s(\mathbb R)$ and an approximate sequence of initial data $v_0^j=\mathcal{F}_x^{-1}\big(\chi_{|_{[-j,j]}}\mathcal{F}_xv_0\big)$. Then, as explained above, the associated sequence of solutions $\{v^j\}_j$ is a subset of $C([0,1];H^{\infty}(\mathbb R))$. Moreover, since it evolves from initial data having the same low frequency part,  the Lipschitz bound implies that $\{v^j\}_j$ is a Cauchy sequence in all the norms appearing in $N^s_1$ and therefore converges strongly in those norms to a solution $v$ of \eqref{hoBOepsilon1} satisfying $v(\cdot,0)=v_0$ and \eqref{theo1.1}--\eqref{theo1.2}.
\section{Convergence towards the Benjamin-Ono equation in $ H^1(\R) $ when $ \rho=\sqrt{3} \rho_1$}
In this section we prove that \eqref{hoBO} is uniformly in $ \varepsilon>0 $ well-posed in $ H^1(\R) $  whenever the ratio of the density between the two
 fluids is given by  $ \rho=\sqrt{3} \rho_1 $. This will in a  classical way (see for instance \cite{GW}) lead to  Theorem \ref{theo2}. According to Remark \ref{condition}, this condition on this ratio permits to cancel the term $\partial_x P_{+hi}(e^{iF} v^2) $ in \eqref{eqw}.  Note that this term behaves mainly as the nonlinear term of the Benjamin-Ono equation $ \partial_x (v^2)$. Therefore we do not know how deal with this term when $ \varepsilon $ is going to zero. One possibility could be to use the variant of the Bourgain' space introduced in \cite{IKT, KT2} as it was done to study the inviscid limit of the the Benjamin-Ono-Burgers equation in \cite{GuoAll}. However, there is  another problem here since linear and bilinear estimates involving  $ V_\varepsilon $ are not uniform in  $\varepsilon$ (see for instance \eqref{stri1}-\eqref{stri2}).  This is due to the fact that the linear terms $ {\mathcal H} u_{xx} $  and  $\varepsilon \partial_x^3 $  compete together as in the Benjamin equation (cf. \cite{ABR}). This is reflected on the energy $ H $ (see \eqref{H}) by the fact that the $ \dot{H}^1 $ and $ \dot{H}^{1/2}$ components of the quadratic part are of opposite signs.

\subsection{Some linear estimates }
First we establish needed linear estimates on the group $ V_\varepsilon(\cdot) $ (see also 
Lemmas \ref{strichartz}-\ref{smoothing}).
\begin{lemma}
For any $ 0<\varepsilon\le 1$, any $ 0<T\le 1 $ and any $ \varphi \in L^2(\R) $ it holds
\begin{equation}
\| V_\epsilon (t) \varphi \|_{L^4_T L^\infty_x} \lesssim \epsilon^{-1/6} \| \varphi\|_{L^2}\label{stri1}
\end{equation}
and
\begin{equation}
\| V_\epsilon (t)  \varphi \|_{L^6_T L^6_x} \lesssim \epsilon^{-1/9} \| \varphi\|_{L^2}\label{stri2}
\end{equation}
Moreover, denoting by $ P_\epsilon $ a smooth  space Fourier projector on $\{ |\xi| \not\in ]\frac{1}{4\epsilon},\frac{1}{2\epsilon}[ \}$,
 we have
\begin{equation}
\| V_\epsilon (t) P_\epsilon \varphi \|_{L^4_T L^\infty_x} \lesssim  \| \varphi\|_{L^2}\label{stri3}
\end{equation}
and
\begin{equation}
\| V_\epsilon (t) P_\epsilon \varphi \|_{L^6_T L^6_x} \lesssim  \| \varphi\|_{L^2}\label{stri4}
\end{equation}
\end{lemma}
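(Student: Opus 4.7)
The plan is to establish a pointwise dispersive bound on the propagator $V_\epsilon$ and then invoke the standard $TT^*$/Strichartz machinery. Writing $V_\epsilon = W^+_\epsilon P_+ + W^-_\epsilon P_-$ as in Lemma~\ref{strichartz}, it suffices to treat each half-line separately; on $\{\xi>0\}$ the phase is $\phi(\xi) = b\xi^2 - a\epsilon\xi^3$ with $\phi'''(\xi) \equiv -6a\epsilon$ uniformly bounded away from zero, and the situation on $\{\xi<0\}$ is analogous. Van der Corput's lemma based on the third derivative then yields the uniform kernel bound $\|V_\epsilon(t)\|_{L^1_x \to L^\infty_x} \lesssim (\epsilon|t|)^{-1/3}$.

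Interpolating this decay with the trivial $L^2 \to L^2$ bound and combining $TT^*$ with the Hardy--Littlewood--Sobolev inequality in the time variable produces the Strichartz estimates $\|V_\epsilon\varphi\|_{L^q_t L^p_x(\R^2)} \lesssim \epsilon^{-1/q}\|\varphi\|_{L^2}$ for every pair $(p,q)$ satisfying $\frac{6}{q}+\frac{2}{p}=1$, in particular $(p,q)=(\infty,6)$ with factor $\epsilon^{-1/6}$ and $(p,q)=(6,9)$ with factor $\epsilon^{-1/9}$. Since $T \le 1$, the continuous embeddings $L^6_T \hookrightarrow L^4_T$ and $L^9_T \hookrightarrow L^6_T$ are cost-free and yield \eqref{stri1} and \eqref{stri2}. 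Alternatively, \eqref{stri2} is already contained in Lemma~\ref{strichartz} by choosing $\theta = 8/9$, for which $p_\theta = 6$ and $\epsilon^{-\theta/8} = \epsilon^{-1/9}$.

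For the projected estimates \eqref{stri3} and \eqref{stri4}, the operator $P_\epsilon$ is designed to remove a neighborhood of the critical frequencies $\xi_0 = \pm b/(3a\epsilon)$ at which $\phi''$ vanishes. After a suitable choice of cutoff radii around $\xi_0$, one has $|\phi''(\xi)| \gtrsim 1$ on the support of $P_\epsilon$, so that the critical points of the total phase $t\phi(\xi) + x\xi$ are non-degenerate with $|\phi''|$ uniformly bounded below. Classical stationary phase (van der Corput near each critical point plus integration by parts on the non-critical region) then yields the improved uniform kernel bound $\|V_\epsilon P_\epsilon(t)\|_{L^1_x \to L^\infty_x} \lesssim |t|^{-1/2}$, which is the 1D Schr\"odinger-type dispersive decay; the $TT^*$ machinery applied to this estimate produces uniformly in $\epsilon$ the admissible pairs satisfying $\frac{2}{q}+\frac{1}{p}=\frac{1}{2}$, whose endpoints $(p,q)=(\infty,4)$ and $(p,q)=(6,6)$ are precisely \eqref{stri3} and \eqref{stri4}. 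The main technical point is to verify the uniform lower bound $|\phi''| \gtrsim 1$ on the support of $P_\epsilon$: since $\phi''$ vanishes exactly at $\xi_0$, the excised interval in the definition of $P_\epsilon$ must contain $\xi_0$, and this is what determines (or forces a harmless adjustment of) the specific numerical values $1/(4\epsilon)$ and $1/(2\epsilon)$ as a function of the explicit constants $a,b$.
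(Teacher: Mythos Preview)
Your argument is correct and close in spirit to the paper's, but the route to \eqref{stri1}--\eqref{stri2} differs in an instructive way. You apply third-order van der Corput to the full phase to obtain the Airy-type kernel bound $\|V_\epsilon(t)\|_{L^1_x\to L^\infty_x}\lesssim(\epsilon|t|)^{-1/3}$, which yields Strichartz pairs on the KdV line $\tfrac{3}{q}+\tfrac{1}{p}=\tfrac12$, and then exploit $T\le1$ to pass from $L^6_T$ and $L^9_T$ down to $L^4_T$ and $L^6_T$. The paper instead rescales $\theta=\xi\sqrt{t}$, bounds the resulting oscillatory integral by $\epsilon^{-1/3}$ uniformly in $|t|\le1$ (using third-order van der Corput together with integration by parts in the tails), and thereby obtains the hybrid Schr\"odinger-type dispersive bound $\epsilon^{-1/3}|t|^{-1/2}$; this lands directly on the Schr\"odinger admissible pairs $(p,q)=(\infty,4)$ and $(6,6)$ with the stated constants, avoiding the time embedding. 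Either way the outcome and the constants match, and in fact the paper's rescaled van der Corput step, if one keeps track of the $t$-dependence instead of discarding it, recovers exactly your $(\epsilon|t|)^{-1/3}$ bound. For \eqref{stri3}--\eqref{stri4} your argument coincides with the paper's: the projector $P_\epsilon$ removes a neighbourhood of the inflection point $|\xi|=b/(3a\epsilon)$ of the phase, second-order van der Corput applies with $|\phi''|\gtrsim 1$, and the resulting $|t|^{-1/2}$ kernel decay gives the uniform Schr\"odinger Strichartz pairs. One small remark: when you invoke the splitting $V_\epsilon=W^+_\epsilon P_++W^-_\epsilon P_-$ for the $L^1\to L^\infty$ kernel bound, it is cleaner to argue directly on the kernel $\int_\R e^{i(x\xi+t\omega(\xi))}d\xi$ split as $\int_0^\infty+\int_{-\infty}^0$, since $P_\pm$ are not bounded on $L^1$ or $L^\infty$; this is of course what you effectively do.
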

\proof By the $ TT^{*} $ argument it suffices to prove that for $ 0<t\le 1 $,
\begin{equation}
\| V_\epsilon (t) \varphi \|_{L^\infty_x} \lesssim \epsilon^{-1/3}t^{-1/2} \| \varphi\|_{L^2}\label{to1}
\end{equation}
and
\begin{equation}
\| V_\epsilon (t) P_\epsilon  \varphi \|_{L^\infty_x} \lesssim t^{-1/2}\| \varphi\|_{L^2}\label{to2}
\end{equation}
By classical arguments,  \eqref{to1} will be proven if we show
$$
\Bigl\| \int_{\R} e^{i[x\xi+(\xi|\xi|-\epsilon \xi^3)t]}\, d\xi\Bigr\|_{L^\infty_x} \lesssim \epsilon^{-1/3} t^{-1/2}  \; .
$$
Setting $ \theta:=\xi \sqrt{t} $ this is equivalent to prove
\begin{equation}\label{to3}
I_{\epsilon}:=\sup_{|t|\le 1, X\in\R} \Bigl|\int_{\R} e^{i[X\theta+|\theta|\theta-\frac{\epsilon}{\sqrt{t}} \theta^3]}\, d\theta \Bigr| \lesssim \epsilon^{-1/3}
\end{equation}
We set $ \Phi(\theta):=X\theta+\theta^2-\frac{\epsilon}{\sqrt{t}} \theta^3$ and notice that for $ \theta\neq 0 $, 
$$
 \Phi'(\theta):=X+2|\theta|-3 \frac{\epsilon}{\sqrt{t}} \theta^2,\;
 \Phi^{''}(\theta)=2(\text{sgn}\,\theta- 3 \frac{\epsilon}{\sqrt{t}} \theta) \;\mbox{ and  }
 \Phi^{'''} (\theta)= -6  \frac{\epsilon}{\sqrt{t}} \, .
$$
\eqref{to3} is obvious when restricted on $ |\theta|\le 100$. By symmetry we can assume that $ \theta>100 $.
$ X,t $ and $ \epsilon $ being fixed, there exists $ M_{t,\epsilon,X} \ge 100 $ such that
$$
\forall \theta>M, \quad |\Phi'(\theta)| \ge \max( 1+|\theta|, \frac{\epsilon}{\sqrt{t}} \theta^2) \;.
$$
Therefore,  integration by parts yields that  $ I_\epsilon\lesssim 1 $ in this region.\\
Now for $ \theta<M$, we use Van der Corput lemma and that $  |\Phi^{'''} (\theta)|= 6 \frac{\epsilon}{\sqrt{t}} $
 to get $  I_\epsilon\lesssim \epsilon^{-1/3} $. This completes the proof of \eqref{to1}. \\
 To prove \eqref{to2}, we use that $ \xi\not\in ]\frac{1}{4\epsilon},\frac{1}{2\epsilon}[  $ implies
  $ \theta\not\in  ]\frac{\sqrt{t}}{4\epsilon},\frac{\sqrt{t}}{2\epsilon}[  $ and thus
   $ |\Phi^{''}(\theta)|=2|1-3  \frac{\epsilon}{\sqrt{t}} \theta| \gtrsim 1 $.
This yields the result by applying Van der Corput lemma in the region  $\theta<M$.\qed
\begin{lemma}
For any $ 0<\varepsilon\le 1$, any $ 0<T\le 1 $ and any $ \varphi \in L^2(\R) $ it holds
\begin{equation}
\| V_\epsilon (t) \varphi \|_{L^4_x L^\infty_T} \lesssim \epsilon^{-1/4} \| \varphi\|_{L^2}\label{stri5}
\end{equation}
and  for any dyadic integer $ N\ge 10/\varepsilon $, 
\begin{equation}
\| V_\epsilon (t) P_N \varphi \|_{L^\infty_x L^2_t } \lesssim  \varepsilon^{-1/2}  \| P_N \varphi\|_{L^2}\label{stri6}
\end{equation}
\end{lemma}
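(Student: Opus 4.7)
The plan is to prove the two estimates separately, following the same TT* / stationary phase strategy used in the previous lemma. The Kato smoothing estimate \eqref{stri6} is direct, while the maximal estimate \eqref{stri5} requires a more delicate oscillatory integral analysis.

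For \eqref{stri6}, the hypothesis $N \ge 10/\varepsilon$ places $\xi$ on $\mathrm{supp}\,\phi_N$ in the non-degenerate region for the phase $\Phi(\xi) := b|\xi|\xi - a\varepsilon\xi^3$: the derivative $\Phi'(\xi) = |\xi|(2b - 3a\varepsilon|\xi|)$ satisfies $|\Phi'(\xi)| \sim a\varepsilon N^2$ since $3a\varepsilon|\xi|\gtrsim 30a \gg 2b$. Writing
$$V_\varepsilon(t)P_N\varphi(x) = \int_{\R} e^{i(x\xi + t\Phi(\xi))}\phi_N(\xi)\widehat{\varphi}(\xi)\,d\xi$$
and performing the change of variables $\tau = \Phi(\xi)$ separately on each connected component of $\mathrm{supp}(\phi_N)$, Plancherel in $t$ yields
$$\|V_\varepsilon(t)P_N\varphi(x)\|_{L^2_t}^2 \lesssim \int_{|\xi|\sim N}\frac{|\phi_N(\xi)\widehat{\varphi}(\xi)|^2}{|\Phi'(\xi)|}\,d\xi \lesssim \frac{1}{\varepsilon N^2}\|P_N\varphi\|^2_{L^2},$$
uniformly in $x$. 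Since $N \ge 10/\varepsilon \ge 10$, we have $1/(\varepsilon N^2) \le \varepsilon^{-1}$, which gives \eqref{stri6} with room to spare.

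For \eqref{stri5}, the plan is to use TT* duality. The claim is equivalent to the bound
$$\Big\|\int_0^T V_\varepsilon(t-s)f(s)\,ds\Big\|_{L^4_xL^\infty_T} \lesssim \varepsilon^{-1/2} \|f\|_{L^{4/3}_xL^1_T}.$$
Bounding the $\sup_{t\in [0,T]}$ pointwise by the maximal convolution kernel $K^*(x) := \sup_{|\tau|\le 2T}|K_\varepsilon(x,\tau)|$, where $K_\varepsilon(x,t) = \int e^{i(x\xi+t\Phi(\xi))}\,d\xi$, and applying Young's inequality in $x$, the estimate reduces to a weak-type control $\|K^*\|_{L^{2,\infty}_x} \lesssim \varepsilon^{-1/2}$. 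This is obtained by stationary phase on the defining oscillatory integral: the phase $\Psi(\xi) = x\xi + t\Phi(\xi)$ has $\Psi''(\xi) = 2t(b\,\mathrm{sgn}\,\xi - 3a\varepsilon\xi)$, vanishing only at the critical frequency $\xi_0 = b\,\mathrm{sgn}(\xi)/(3a\varepsilon)$. Away from $\xi_0$ we use the lower bound $|\Psi''|\gtrsim \varepsilon|t||\xi - \xi_0|$ and van der Corput's lemma at second order; in a neighborhood of $\xi_0$ we use the uniform bound $|\Psi'''| = 6a\varepsilon|t|$ and van der Corput at third order, exactly as in the proof of \eqref{to3}. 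The resulting pointwise kernel displays Airy-type decay centered on the caustic line $x = -b^2 t/(3a\varepsilon)$, and integrating its $\sup_t$ envelope in $x$ produces the $\varepsilon^{-1/2}$ scaling.

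The main obstacle is the oscillatory analysis underlying \eqref{stri5}: the two dispersive terms $b|\xi|\xi$ and $-a\varepsilon\xi^3$ compete at the critical frequency $\xi_0\sim 1/\varepsilon$, creating a caustic that moves along the line $x\sim -t/\varepsilon$ in spacetime. Tracking this caustic uniformly over $t\in[0,T]$ is what produces the $\varepsilon^{-1/4}$ loss in \eqref{stri5}, and is the $L^4_xL^\infty_T$ analogue of the $\varepsilon^{-1/6}$ loss already observed in the dual Strichartz estimate \eqref{stri1}; this degeneracy is the same mechanism mentioned in the introduction of Section 6 for the opposite signs of the $\dot{H}^1$ and $\dot{H}^{1/2}$ components of the energy $H$.
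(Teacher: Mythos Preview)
Your treatment of \eqref{stri6} is correct and coincides with the paper's: it is exactly the computation \eqref{smoothing2}, which the paper simply cites.

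For \eqref{stri5}, however, your reduction to a weak-type bound on the maximal kernel $K^*(x)=\sup_{|\tau|\le 2T}|K_\varepsilon(x,\tau)|$ breaks down: $K^*$ is \emph{not} in $L^{2,\infty}_x$. On the side of $x$ where the phase $\Psi(\xi)=x\xi+t\Phi(\xi)$ has real stationary points $\xi_*$, stationary phase gives $|K_\varepsilon(x,t)|\sim|\Psi''(\xi_*)|^{-1/2}$; for small $t$ these stationary points escape to $|\xi_*|\sim(|x|/(\varepsilon t))^{1/2}$, the Airy term dominates, and $|\Psi''(\xi_*)|\sim(\varepsilon t|x|)^{1/2}$, so the kernel envelope behaves like $(\varepsilon t|x|)^{-1/4}$ and blows up as $t\to 0^+$. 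This is the same obstruction as for the pure Airy fundamental solution $(3t)^{-1/3}\mathrm{Ai}(x/(3t)^{1/3})$, whose supremum over $t\in(0,T]$ is identically $+\infty$ on the oscillatory half-line. Taking $\sup_t$ of $|K_\varepsilon|$ before integrating in $x$ discards exactly the cancellation that makes the maximal estimate true.

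The paper proceeds differently: it invokes the change of unknown \eqref{strichartz2}, as in Lemma~\ref{maximal}, to conjugate $V_\varepsilon$ to the Airy group $U_\varepsilon$, and then appeals to the Kenig--Ponce--Vega $L^4_xL^\infty_t$ maximal inequality for Airy, which carries a $D_x^{1/4}$ loss on the data; the $\varepsilon^{-1/4}$ factor comes from the frequency translation $\xi\mapsto\xi\mp b/(3a\varepsilon)$ built into \eqref{strichartz2}. In the KPV argument the maximal-kernel route \emph{does} succeed, but only because the $TT^*$ kernel carries the weight $|\xi|^{1/2}$ (from the two copies of $D_x^{1/4}$), and it is precisely this weight that produces the uniform-in-$t$ bound $\bigl|\int e^{i(x\xi+t\xi^3)}|\xi|^{1/2}\,d\xi\bigr|\lesssim|x|^{-1/2}$. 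Without that weight the $L^{2,\infty}$ control you claim cannot hold.
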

\proof \eqref{stri5} can be proved by using the change of unknown \eqref{strichartz2} exactly as in Lemma \ref{maximal}  whereas 
 \eqref{stri6} is  proven in  \eqref{smoothing2}.
\subsection{Nonlinear estimates}
According to the linear estimates of Lemmas  \ref{prop1.1}-\ref{prop1.2}, we have to estimate each  terms of  the right-hand side member of \eqref{eqw} in $ X^{1,1/2,1}_\varepsilon $. Recall that the term 
 $ \partial_x P_{+hi} (e^{-iF} v^2) $ cancels in this section  due to the choice of the ratio between $ \rho $ and  $ \rho_1$. 
 \begin{lemma} \label{bilineairepsilon1} Assume that $ \mbox{supp} \,  v \subset \{ |t|\le T\} $ with $ 1<T\le 1$. Then  for any $ 0<\epsilon\le 1 $  it holds
 \begin{equation}
 \Bigl\|\partial_x P_{+hi} (W \partial_x P_{-hi}v)  \Bigr\|_{X^{1,-1/2,1}_\epsilon}\lesssim
  \|w\|_{X^{1,-1/2,1}} (1+\|v\|_{L^\infty_t H^1_x}) \|v\|_{L^\infty_t H^1_x}
 \end{equation}
 \end{lemma}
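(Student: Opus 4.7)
The plan is to follow the scheme of the proof of Proposition \ref{bilin}, now in the $\epsilon$-dependent Bourgain space $X^{1,-\frac12,1}_\epsilon$ and with the sharper $\|v\|_{L^\infty_tH^1_x}$-norm on the right-hand side. After duality, the estimate reduces to a trilinear form
\begin{displaymath}
I=\sum_{N,L}\langle N\rangle\langle L\rangle^{-1/2}\int_{\mathcal{D}_+}\xi\,h_{N,L}(\xi,\tau)\,\phi_N(\xi)\,\psi_L(\xi,\tau)\,\widehat{W}(\xi_1,\tau_1)\,\xi_2\,\widehat{P_{-hi}v}(\xi_2,\tau_2)\,d\nu,
\end{displaymath}
over $\mathcal{D}_+=\{\xi,\xi_1\ge 2,\ \xi_2\le-2\}$, with $h_{N,L}$ of $\ell^2_N\ell^\infty_L(L^2_{\xi,\tau})$-norm at most one. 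Since $\xi_1=\xi+|\xi_2|$ on this support, the factor $W$ always carries the largest spatial frequency, and one is in either the high-low regime $N_1\sim N$, $N_2\le N_1$ or the high-high regime $N_1\sim N_2$, $N\le N_1$.

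The key ingredient is the resonance identity for the $\epsilon$-dependent phase $\sigma=\tau-b|\xi|\xi+a\epsilon\xi^3$: on $\mathcal{D}_+$ a direct computation yields
\begin{displaymath}
\sigma-\sigma_1-\sigma_2=\xi\,\xi_2\,(3a\epsilon\,\xi_1-2b).
\end{displaymath}
The new feature -- which is exactly the source of the technical difficulty flagged in the introduction -- is that this resonance \emph{vanishes} near the critical scale $\epsilon\xi_1\sim 2b/(3a)$, where the BO and KdV dispersions compensate each other as in the Benjamin equation. I would therefore split the sum into three spatial regimes: (a) $\epsilon N_1\ll 1$, where the resonance is $\sim NN_2$ (BO-like); (b) $\epsilon N_1\gg 1$, where it is $\sim\epsilon NN_1N_2$ (KdV-like); (c) the transitional dyadic shell $\epsilon N_1\sim 2b/(3a)$.

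In regime (a) the proof tracks Proposition \ref{bilin} verbatim -- the linear estimates for $V_\epsilon$ at frequencies $\lesssim\epsilon^{-1}$ behave uniformly in $\epsilon$ -- except that by exploiting the $P_+$/$P_-$ separation via Lemma \ref{lemma2}, every derivative is transferred from $v$ to $W$, and the remaining $L^\infty_x$-control on $v$ is supplied by the Sobolev embedding $H^1_x\hookrightarrow L^\infty_x$; this yields the factor $\|v\|_{L^\infty_tH^1_x}$ in place of a Bourgain norm. In regime (b) the stronger KdV resonance permits the use of the sharp Strichartz estimates \eqref{stri3}--\eqref{stri4} (uniform in $\epsilon$ thanks to $P_\epsilon$ removing the bad zone $|\xi|\sim 1/(2\epsilon)$): since $N_1\gtrsim\epsilon^{-1}$, the factor $W$ naturally lives where $P_\epsilon$ is trivial, and the $\epsilon N_1$-gain from the resonance absorbs any $\epsilon$-losses appearing in \eqref{stri1}--\eqref{stri2}. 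In both regimes, the split on $L_{\max}\in\{L,L_1,L_2\}$ proceeds as in Proposition \ref{bilin}: when $L_{\max}=L_1$ we absorb $\|P_{N_1}Q_{L_1}w\|_{L^2}$ into $\|w\|_{X^{1,1/2,1}_\epsilon}$; when $L_{\max}=L$ the sum in $L$ is essentially finite and $h_{N,L}$ is placed directly in $L^2$; when $L_{\max}=L_2$ the $\langle L_2\rangle$-gain is paired with H\"older in time (losing a harmless $T^{1/2-}\le 1$). The $(1+\|v\|_{L^\infty_tH^1_x})$-factor on the right comes from the $H^1$-control of $e^{iF}$ provided by Lemma \ref{lemma1}.

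The main obstacle is regime (c), exactly where no modulation gain is available. The saving grace is that $\xi_1$ is essentially pinned to a single dyadic scale of size $\sim\epsilon^{-1}$, so the sum in $N_1$ reduces to $O(1)$ terms and one works on a single block. The estimate there must be closed purely by H\"older, Bernstein, and the Sobolev embedding, without any Strichartz or modulation gain; the precise bookkeeping of the $\epsilon$-powers in the resulting Bernstein inequalities (which naturally produce factors of $N_1\sim\epsilon^{-1}$) is the delicate point. A secondary technical issue is to verify that the Strichartz exponents forced by the $\epsilon$-uniformity requirement are still compatible with placing $v$ in only $L^\infty_tH^1_x$; this is again handled by systematically routing every derivative onto $w$ via Lemma \ref{lemma2}.
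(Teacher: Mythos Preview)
Your decomposition into regimes (a), (b), (c) based on the size of $\epsilon N_1$ relative to the resonance zero $2b/(3a)$ misses the actual structure, and regime (c) as you describe it does not close.

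The two ``bad zones'' are \emph{distinct}: the Strichartz estimates \eqref{stri3}--\eqref{stri4} fail to be uniform only on the band $\epsilon\xi_1\in\,]1/4,1/2[$ (where the second derivative of the phase vanishes at $\epsilon\xi_1=1/3$), whereas the resonance factor $3a\epsilon\xi_1-2b$ vanishes at $\epsilon\xi_1=2/3$. Both lie in the same dyadic shell $N_1\sim\epsilon^{-1}$, so your dyadic regime (c) contains them both, but they can be separated by a smooth (non-dyadic) cutoff. This is exactly what the paper does: it writes $w=w_1+w_2$ with $\widehat{w_1}$ supported on $\epsilon\xi_1\in\,]1/5,5/9[$. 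For $w_2$ the uniform $L^4_TL^\infty_x$ Strichartz estimate \eqref{stri3} applies directly and no resonance is needed. For $w_1$ one has $3\epsilon\xi_1<5/3$, so $|2-3\epsilon\xi_1|\ge 1/3$ and the resonance gives $L_{\max}\gtrsim NN_2$ \emph{uniformly}; the $\mathcal A/\mathcal B/\mathcal C$ split then proceeds. Your claim that ``linear estimates for $V_\epsilon$ at frequencies $\lesssim\epsilon^{-1}$ behave uniformly in $\epsilon$'' is false precisely on the $w_1$-support, and your proposal to close regime (c) ``purely by H\"older, Bernstein, and Sobolev'' loses a factor $N_1^{1/2}\sim\epsilon^{-1/2}$: for a single block $N\sim N_1\sim\epsilon^{-1}$ one gets at best
\[
\|\partial_x P_{+hi}(P_{N_1}W\cdot\partial_xP_{-hi}v)\|_{L^2_tH^1_x}\lesssim N_1^2\|P_{N_1}W\|_{L^\infty_{t,x}}\|\partial_xv\|_{L^2_{t,x}}\lesssim T^{1/2}N_1^{1/2}\|w\|_{L^\infty_tH^1_x}\|v\|_{L^\infty_tH^1_x},
\]
which is not uniform.

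There is a second gap in the $L_{\max}=L_2$ subcase. Pairing the $\langle L_2\rangle$-gain ``with H\"older in time'' is not enough: you need genuine decay of $\|Q_{L_2}P_{N_2}v\|_{L^2}$ in $L_2$, which $\|v\|_{L^\infty_tH^1_x}$ alone does not provide. The paper uses that on the $w_1$-support the frequency $|\xi_2|$ is forced to be $\lesssim\epsilon^{-1}$, and then invokes the \emph{equation} \eqref{hoBO} to bound $\|\partial_t(V_\epsilon(-t)\tilde v)\|_{L^2_{t,x}}\lesssim\|vv_x\|_{L^2_{t,x}}\lesssim\|v\|_{L^\infty_tH^1_x}^2$ for the low-frequency truncation $\tilde v$, yielding $\|\tilde v\|_{X^{0,1}_\epsilon}\lesssim(1+\|v\|_{L^\infty_tH^1_x})\|v\|_{L^\infty_tH^1_x}$. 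This, and not Lemma~\ref{lemma1} (there is no $e^{iF}$ in this bilinear form), is the source of the $(1+\|v\|_{L^\infty_tH^1_x})$ factor on the right-hand side.
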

 \proof
 We rewrite $ \hat{w}$ as $  \eta_{\epsilon} \hat{w} + (1-\eta_{\epsilon}) \hat{w} :=\widehat{w_1}+\widehat{w_2} $ where
  $ \eta_{\epsilon} $ is a smooth even  function with support in $ |\xi|\in  ]\frac{1}{5\epsilon},\frac{5}{9\epsilon}[   $ such that
   $  \eta_{\epsilon}\equiv 1 $ on  $ ]\frac{1}{4\epsilon},\frac{1}{2\epsilon}[$. \\
   By Sobolev embedding we have
   $$
  J:= \Bigl\|\partial_x P_{+hi} (W_2\partial_x P_{-hi}v)  \Bigr\|_{X^{1,-1/2,1}_\epsilon}\lesssim
     T^{1/3} \Bigl\|\partial_x^2 P_{+hi} (W_2\partial_x P_{-hi}v)  \Bigr\|_{L^{2}_T L^2_x}\; .
   $$
   But for any fixed  integer $ k\ge 1 $, making use of the discrete Young's inequality as in the proof of \eqref{apriori v.14b}, we get 
   \begin{eqnarray*}
   \Bigl\|\partial_x^2 P_{2^k} P_{+hi} (W_2\partial_x P_{-hi}v)  \Bigr\|_{L^{2}_t L^2_x} &
   \lesssim & 2^{2k} \sum_{j\ge k } 
      \Bigl\|\partial_x^2 P_{2^k} P_{+hi} (P_{2^j} W_2\partial_x P_{-hi}u)  \Bigr\|_{L^{2}_t L^2_x} \\
      & \lesssim & 2^{2k} \sum_{j\ge k} 2^{-j} \|P_{2^j} w_2 \|_{L^4_T L^\infty_x} 
       \| \partial_x u \|_{L^\infty_t L^2_x} \\
       & \lesssim &  \| \partial_x v \|_{L^\infty_T L^2_x}\sum_{j\ge k}2^{-2(j-k)}
        \|P_{2^j}\partial_x w_2 \|_{L^4_t L^\infty_x}  \\
       & \lesssim & \gamma_k   \|\partial_x w_2 \|_{\widetilde{L^4_t L^\infty_x} }
       \| \partial_x v \|_{L^\infty_t L^2_x} 
   \end{eqnarray*}
   where $ (\gamma_k)\in l^2(\Z_+)$. This is 
   acceptable, according to \eqref{stri3}, since  $ \widehat{w_2} $ cancels on $ ]\frac{1}{4\epsilon},\frac{1}{2\epsilon}[$.\\
   Now to treat the contribution of  $ w_1$   we have to use the resonance relation :
   $$
   \sigma-\sigma_1-\sigma_2=\xi \xi_2(2-3\epsilon \xi_1)
   $$
   Since $\mbox{Supp } \hat{w_1} \subset    ]\frac{1}{5\epsilon},\frac{5}{9\epsilon}[ $, it follows that
   $ \max(|\sigma|,|\sigma_i|)\gtrsim |\xi \xi_2 |$ for this contribution. 
   We set 
   \begin{eqnarray*}
   I&:=&\Bigl\|\partial_x P_{+hi} (W_2\partial_x P_{-hi}v)  \Bigr\|_{X^{1,-1/2,1}_\epsilon}\\
   & \lesssim  & \sum_{N,N_1,N_2}  \Bigl\|P_{N} \partial_x P_{+hi} (P_{N_1} W_2\partial_x P_{-hi} P_{N_2} v)  \Bigr\|_{X^{1,-1/2,1}_\epsilon}:= \sum_{N,N_1,N_2} I_{N,N_1,N_2}
     \end{eqnarray*}   
     Clearly,  
\begin{displaymath}
\begin{split} 
I_{N,N_1,N_2}   =  &\sum_{L} \Bigl[ \int_{\R^2}\frac{\langle\xi \rangle \xi \chi_{\R_+}(\xi) \phi_N (\xi)\psi_L(\xi,\tau)}{\langle \tau-b|\xi|\xi +a \varepsilon \xi^3 \rangle}\Bigl| \\ & \times
\int_{\R^2} 
  \phi_{N_1}(\xi_1) \widehat{W_2}(\xi_1,\tau_1) \xi_2 \phi_{N_2}(\xi_2) \widehat{P_{-hi} v} (\xi_2,\tau_2) d\xi_1\, d\tau_1\Bigr|^2\, d\xi \, d\tau \Bigr]^{1/2}    \; .
 \end{split}
 \end{displaymath}
       We separate the contributions of  different regions with respect to which $ \sigma $ is dominant. Therefore, to calculate  $I_{N, N_1,N_2}$, we split the integration domain $\mathcal{D}$ in the following disjoint regions
\begin{equation} \label{bilincrit4b}
\begin{split}
\mathcal{A}_{N,N_2}&=\big\{(\xi,\xi_1,\tau,\tau_1) \in \R^4 \ | \ 2^{-2} N N_2  \le  |\sigma|\le  2^2 N N_2   \big\}, \\
\mathcal{B}_{N,N_2}&=\big\{(\xi,\xi_1,\tau,\tau_1) \in  \R^4 \ | \ |\sigma_1|\ge \frac 16  N N_2  \ , \ |\sigma|\not\in [2^{-2} N N_2, 2^2 N N_2]  \big\}, \\
\mathcal{C}_{N,N_2}&=\big\{(\xi,\xi_1,\tau,\tau_1) \in  \R^4 \ | \ |\sigma|\not\in [2^{-2} N N_2, 2^2 N N_2]  , \ |\sigma_1|< \frac16 N N_2 \ , |\sigma_2|\ge
 \frac16 NN_2 \big\},
\end{split}
\end{equation}
and denote by $I^{\mathcal{A}_{N,N_2}}_{N, N_1,N_2}$, $I^{\mathcal{B}_{N,N_2}}_{N, N_1,N_2}$, $I^{\mathcal{C}_{N,N_2}}_{N, N_1,N_2}$ the restriction of $I_{N, N_1,N_2}$ to each of these regions. Then, it follows that
$$
I_{N, N_1,N_2}\le I^{\mathcal{A}_{N,N_2}}_{N, N_1,N_2}+I^{\mathcal{B}_{N,N_2}}_{N, N_1,N_2}+I^{\mathcal{C}_{N,N_2}}_{N, N_1,N_2}
$$
 and thus
\begin{equation} \label{bilincrit5}
I^2 \lesssim \sum_{N} \big|  I^{\mathcal{A}}_N \big|^2 +\sum_{N}  \big|  I^{\mathcal{B}}_N\big|^2+\sum_{N}  \big| I^{\mathcal{C}}_N \big|^2,
\end{equation}
where
$$
I^{\mathcal{A}}_N:=\sum_{ N_1,N_2} I^{\mathcal{A}_{N,N_2}}_{N, N_1,N_2}, \; I^{\mathcal{B}}_N:=\sum_{N_1,N_2} I^{\mathcal{B}_{N,N_2}}_{N, N_1,N_2} \ \mbox{and } \ I^{\mathcal{C}}_N:=\sum_{ N_1,N_2} I^{\mathcal{C}_{N,N_2}}_{N, N_1,N_2}\; .
 $$
Therefore,  it suffices to bound $ \sum_{N} \big|  I^{\mathcal{A}}_N \big|^2$, $\sum_{N}  \big|  I^{\mathcal{B}}_N\big|^2$ and $ \sum_{N}  \big| I^{\mathcal{C}}_N \big|^2 $. \\
   $\bullet $ Bound on $\sum_{N}  \big|  I^{\mathcal{B}}_N\big|^2$. We use that $ L^{1+}_{loc} \hookrightarrow {\mathcal B}^{-1/2,1}_{2,loc}  $  to get 
   \begin{eqnarray*}
   I_{2^k} ^{\mathcal{B}}   & \lesssim  &
   T^{1/3}  2^{2k} \sum_{j\ge k } \sum_{q\le j} 
      \Bigl\| P_{2^k} P_{+hi} (P_{2^j} W_2\partial_x P_{-hi}P_{2^q} v)  \Bigr\|_{L^{2}_T L^2_x} \\
      & \lesssim &  T^{1/3}  \sum_{j\ge k}  \sum_{q\le j}  2^{2(k-j)} 2^{-(k+q)/2} \|P_{2^j}\partial_x  w_2 \|_{X^{0,1/2}} 
       \|P_{2^q}  \partial_x v \|_{L^\infty_T L^\infty_x} \\
       & \lesssim &  T^{1/3}   \sum_{j\ge k}  \sum_{q\le j}  2^{\frac{3}{2}(k-j)}  \|P_{2^j} \partial_x w_2 \|_{X^{0,1/2}}
       2^{(q-j)/2}  
       \| P_{2^q}  v\|_{L^\infty_T L^\infty_x}  \\
       & \lesssim &  T^{1/3}  \gamma_k \|w_2\|_{X^{1,1/2}} \|v\|_{L^\infty_T H^1_x}
   \end{eqnarray*}
with $ \|(\gamma_k)\|_{ l^2(\Z_+)}\lesssim 1 $. \\
      $\bullet $  Bound on $ \sum_{N} \big|  I^{\mathcal{A}}_N \big|^2$. Note that since $ |\sigma|\sim N N_2 $ in this region, we do not have to sum on $ L $. 
      \begin{eqnarray*}
   I_{2^k}^{\mathcal{A}}   & \lesssim  &
   \sum_{j\ge k}  \sum_{q\le j}  2^{-2(j-k)} 2^{-(k+q)/2} \|P_{2^j}\partial_x  w_2 \|_{L^2_{tx}} 
       \|P_{2^q}  \partial_x v \|_{L^\infty_T L^\infty_x} \\
       & \lesssim &     \sum_{j\ge k}  \sum_{q\le j}  2^{\frac{3}{2}(k-j)}  \|P_{2^j} \partial_x w_2 \|_{L^2_{tx}}
       2^{(q-j)/2}  
       \| P_{2^q}  v \|_{L^\infty_T L^\infty_x}  \\
       & \lesssim &  T^{1/2}  \gamma_k \|w_2\|_{X^{1,1/2,1}} \|v\|_{L^\infty_T H^1_x}
   \end{eqnarray*}
with $ \|(\gamma_k)\|_{ l^2(\Z_+)}\lesssim 1 $. \\
        $\bullet $  Bound on $\sum_{N}  \big|  I^{\mathcal{C}}_N\big|^2$.
       $$
  \Bigr(      \sum_{N} | I_{N}^{\mathcal{C}}  |^2\Bigl)^{1/2} \lesssim \|{\tilde u}\|_{X^{0,1}} \|{\mathcal F}^{-1}(|\widehat{w_1}|)\|_{L^\infty_{tx}} \lesssim \|w\|_{X^{1,1/2,1}} \|{\tilde v}\|_{X^{0,1}}
       $$
       where $ \widehat{\tilde v}= {\tilde \eta}_\epsilon \hat{v} $ and $ {\tilde \eta}_\epsilon $ is a smooth even function with support in $ \{|\xi|\le \frac{100}{\epsilon}\} $. Here we used that due to the frequency projections together with the frequency localization of $ w_1 $,  the modulus of the frequencies of $ v $ must be less than $ 10/\varepsilon $. 
             Now from the equation \eqref{hoBO} satisfied by $ v $ and the frequency localization of ${\tilde v}$, we deduce that
        $$\|\partial_t V_\epsilon(-t) {\tilde v} \|_{L^2_{tx}} \lesssim\|v v_x \|_{L^2_{tx}} \lesssim \|v\|_{L^\infty_t H^1_x}^2 \; .
       $$
         This yields the desired  result.  \qed
          \begin{lemma}  Assume that $ \mbox{supp} \,  v \subset \{ |t|\le T\} $ with $ 1<T \le 1$. Then  for any $ 0<\epsilon\le 1 $  it holds
          \begin{equation}
\epsilon  \Bigl\|\partial_x P_{+hi} (w\partial_x P_{-hi}v)  \Bigr\|_{X^{1,-1/2,1}_\epsilon}\lesssim
  \|w\|_{X^{1,-1/2,1}} \Bigl[ \epsilon^{1/9} \|v_x\|_{L^4_{tx}}+ (1+\|v\|_{L^\infty_t H^1_x}) \|v\|_{L^\infty_t H^1_x}\Bigr]
 \end{equation}
 \end{lemma}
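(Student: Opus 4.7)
The plan is to mirror the decomposition and resonance analysis of Lemma~6.2, splitting $w=w_1+w_2$ with $\widehat{w_1}=\eta_\varepsilon \widehat{w}$ and $\widehat{w_2}=(1-\eta_\varepsilon)\widehat{w}$, where $\eta_\varepsilon$ is supported in $\{|\xi|\in]\tfrac1{5\varepsilon},\tfrac{5}{9\varepsilon}[\}$ and equal to $1$ on $\{|\xi|\in]\tfrac1{4\varepsilon},\tfrac1{2\varepsilon}[\}$. The two contributions are handled by genuinely different mechanisms, and the statement's two terms on the right-hand side correspond to these two contributions.

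For the contribution of $w_2$, the key point is that $\widehat{w_2}$ vanishes on the degenerate frequency band $]\tfrac1{4\varepsilon},\tfrac1{2\varepsilon}[$, so the $\varepsilon$-uniform Strichartz estimates \eqref{stri3} and \eqref{stri4} are available for $V_\varepsilon(t)w_2$. I would bound $\|\cdot\|_{X^{1,-1/2,1}_\varepsilon}$ via a dual Strichartz embedding into $L^{p}_TH^1_x$ (taking $p$ close to $1$ or the dual of $L^6_{tx}$), apply the fractional Leibniz rule, and split the derivative by H\"older so as to pair $w_2$ with either its $L^6_{tx}$ norm (uniform in $\varepsilon$ by \eqref{stri4}) or $L^4_TL^\infty_x$ norm (uniform in $\varepsilon$ by \eqref{stri3}), and $v_x$ with $L^4_{tx}$ and $v$ with $L^\infty_TH^1_x$. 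Combining a dyadic frequency reorganization as in the $W_2$ argument of Lemma~6.2 with these $\varepsilon$-free Strichartz bounds, the explicit $\varepsilon$ on the left-hand side is consumed to produce the $\varepsilon^{1/9}\|v_x\|_{L^4_{tx}}$ summand (from interpolating between \eqref{stri2} and the trivial $L^2$ bound), the lower-order terms being absorbed into the quadratic $\|v\|_{L^\infty_TH^1_x}^2$.

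For the contribution of $w_1$, the resonance identity $\sigma-\sigma_1-\sigma_2=\xi\xi_2(2-3\varepsilon\xi_1)$ is used exactly as in Lemma~6.2: on the support of $\widehat{w_1}$ one has $3\varepsilon\xi_1\in[\tfrac35,\tfrac53]$, and since $\tfrac5{9\varepsilon}<\tfrac2{3\varepsilon}$ the quantity $|2-3\varepsilon\xi_1|\gtrsim 1$, yielding $\max(|\sigma|,|\sigma_1|,|\sigma_2|)\gtrsim|\xi\xi_2|$. Split $\mathcal{D}$ into the three disjoint regions $\mathcal{A}_{N,N_2}$, $\mathcal{B}_{N,N_2}$, $\mathcal{C}_{N,N_2}$ of \eqref{bilincrit4b} according to which modulation dominates, and estimate $I^{\mathcal{A}}_N$, $I^{\mathcal{B}}_N$ and $I^{\mathcal{C}}_N$ separately. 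In regions $\mathcal{A}$ and $\mathcal{B}$, the resonance gain converts high-frequency weights into $|\sigma|^{1/2}$ factors absorbable in the $X^{0,1/2}$ or $X^{0,1/2,1}$ norm, and the loss of one derivative compared to Lemma~6.2 (since $w=W_x$) is paid by the prefactor $\varepsilon$ thanks to $\varepsilon\xi_1\lesssim 1$ on $\mathrm{supp}\,\widehat{w_1}$. In region $\mathcal{C}$, proceed as at the end of the proof of Lemma~6.2: reduce to controlling $\|\partial_t V_\varepsilon(-t)\tilde v\|_{L^2_{tx}}$ for $\tilde v$ frequency-localized below $100/\varepsilon$ via the equation \eqref{hoBO}, bounding $\|vv_x\|_{L^2_{tx}}\lesssim\|v\|_{L^\infty_tH^1_x}^2$ and thus producing the quadratic $\|v\|_{L^\infty_tH^1_x}^2$ term.

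The main obstacle is the bookkeeping of the $\varepsilon$-budget across the three modulation regions for $w_1$: unlike Lemma~6.2, there is no free derivative to trade, and one must repeatedly invoke the frequency localization $\xi_1\sim 1/\varepsilon$ on $\mathrm{supp}\,\widehat{w_1}$ to convert the explicit $\varepsilon$ into the missing inverse frequency. Region $\mathcal{A}$ is the most delicate because one cannot sum in $L$, so sharp $L^2_{tx}$ control of the dyadic pieces of $w_1$ is needed; region $\mathcal{C}$ requires the equation for $v$ and is responsible for the fact that the estimate cannot be purely linear in $\|v\|_{H^1}$ but forces the quadratic second summand.
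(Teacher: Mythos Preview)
Your decomposition is the wrong one, and this leads to a genuine gap. You reuse the cutoff $\eta_\varepsilon$ from Lemma~\ref{bilineairepsilon1} (supported in $]\tfrac1{5\varepsilon},\tfrac5{9\varepsilon}[$, equal to $1$ on $]\tfrac1{4\varepsilon},\tfrac1{2\varepsilon}[$) and assign the Strichartz argument to $w_2=(1-\eta_\varepsilon)w$ and the resonance argument to $w_1=\eta_\varepsilon w$. The paper does the opposite, and with a \emph{different} cutoff $\zeta_\varepsilon$ supported in $]\tfrac1{2\varepsilon},\tfrac1{\varepsilon}[$ and equal to $1$ on $]\tfrac5{9\varepsilon},\tfrac8{9\varepsilon}[$: the localized piece $w_1=\zeta_\varepsilon w$ is handled by Strichartz (it avoids the degenerate band so \eqref{stri3} applies, and since $\varepsilon\xi_1\in[\tfrac12,1]$ the prefactor $\varepsilon$ cancels the extra derivative coming from $w=W_x$), while the complementary piece $w_2$ is handled by the \emph{stronger} resonance bound $\max(|\sigma|,|\sigma_i|)\gtrsim\varepsilon|\xi\xi_1\xi_2|$, valid because on $\mathrm{supp}\,\widehat{w_2}$ one has $|2-3\varepsilon\xi_1|\gtrsim\varepsilon\xi_1$.

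Your $w_2$ argument cannot close. Your $w_2$ contains all frequencies $\xi_1>\tfrac1{2\varepsilon}$, in particular arbitrarily large $\xi_1$; there the $\varepsilon$ prefactor cannot absorb the extra factor of $\xi_1$ that appears (compared to Lemma~\ref{bilineairepsilon1}) once you try to put two derivatives on $w_2$ in a Strichartz norm controlled by $\|w\|_{X^{1,\frac12,1}_\varepsilon}$. The only rescue at high frequency is the resonance relation, but on your $w_2$ the factor $|2-3\varepsilon\xi_1|$ vanishes at $\xi_1=\tfrac2{3\varepsilon}$, which lies in your $w_2$ support, so the needed lower bound $\gtrsim\varepsilon|\xi\xi_1\xi_2|$ is unavailable. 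This is precisely why the paper's cutoff is centered around the resonance degeneration $\xi_1=\tfrac2{3\varepsilon}$ rather than the Strichartz degeneration band. Finally, the $\varepsilon^{1/9}\|v_x\|_{L^4_{tx}}$ term in the statement does not come from interpolating Strichartz estimates as you suggest; in the paper it emerges from the $\mathcal{A}$ and $\mathcal{B}$ regions of the resonance piece, where the gain $(\varepsilon N N_1 N_2)^{1/2}$ produces a $\sqrt\varepsilon$ that is then split as $\varepsilon^{1/2-1/9}\cdot\varepsilon^{1/9}$.
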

 \proof
 We rewrite $ \hat{w}$ as $  \zeta_{\epsilon} \hat{w} + (1-\zeta_{\epsilon}) \hat{w} :=\widehat{w_1}+\widehat{w_2} $ where
  $ \zeta_{\epsilon} $ is a smooth even  function with support in $ |\xi|\in  ]\frac{1}{2\epsilon},\frac{1}{\epsilon}[   $ such that
   $  \zeta_{\epsilon}\equiv 1 $ on  $ ]\frac{5}{9\epsilon},\frac{8}{9\epsilon}[$. \\
   By Sobolev embedding and the frequency localization of $w_1$ we have
   $$
      J_1:= \epsilon \Bigl\|\partial_x P_{+hi} (w_1\partial_x P_{-hi}v)  \Bigr\|_{X^{1,-1/2,1}_\epsilon}\lesssim
T^{1/3}    \Bigl\|\partial_x P_{+hi} (w_1\partial_x P_{-hi}v)  \Bigr\|_{L^{2}_t L^2_x} \\
   $$
   We proceed exactly as in the preceding lemma to get 
   \begin{eqnarray*}
   \Bigl\|\partial_x P_{2^k} P_{+hi} (w_1\partial_x P_{-hi}v)  \Bigr\|_{L^{2}_t L^2_x} &
   \lesssim & 2^{2k} \sum_{j\ge k } 
      \Bigl\|\partial_x P_{2^k} P_{+hi} (P_{2^j} w_1\partial_x P_{-hi}v)  \Bigr\|_{L^{2}_t L^2_x} \\
      & \lesssim & 2^{2k} \sum_{j\ge k} 2^{-j} \|P_{2^j} w_1 \|_{L^4_t L^\infty_x} 
       \| \partial_x v \|_{L^\infty_t L^2_x} \\
       & \lesssim & \gamma_k   \|\partial_x w_1 \|_{\widetilde{L^4_t L^\infty_x} }
       \| \partial_x v \|_{L^\infty_t L^2_x} 
   \end{eqnarray*}
   which is acceptable, according to \eqref{to2}, since  $ \widehat{w_1} $ cancels on $ ]\frac{1}{4\epsilon},\frac{1}{2\epsilon}[$.\\
Now for the contribution of $ w_2 $ we will use that thanks to the frequency localization of $ w_2 $ and the resonance relation we have
$$
\max(|\sigma|,|\sigma_i|) \gtrsim\epsilon |\xi\xi_1\xi_2|
$$
on the space-time Fourier support of this contribution. We decompose this contribution as in the preceding lemma but with respect to 
 $  \epsilon N N_1 N_2 $ instead of $ N_1 N_2 $. \\
 $\bullet $ Bound on $\sum_{N}  \big|  I^{\mathcal{B}}_N\big|^2$.
   \begin{eqnarray*}
   I_{2^k} ^{\mathcal{B}}   & \lesssim  &\varepsilon 
    2^{2k} \sum_{j\ge k } \sum_{q\le j} 
      \Bigl\| P_{2^k} P_{+hi} (P_{2^j} w_2\partial_x P_{-hi}P_{2^q} v)  \Bigr\|_{L^{4/3}_T L^2_x} \\
      & \lesssim &    \sqrt{\varepsilon} \sum_{j\ge k}  \sum_{q\le j}  2^{2k} 2^{-j} 2^{-(k+q+j)/2} \|P_{2^j}\partial_x  w_2 \|_{X^{0,1/2}} 
       \|P_{2^q} \partial_x   v \|_{L^4_T L^\infty_x} \\
         & \lesssim &    \sqrt{\varepsilon} \sum_{j\ge k}  \sum_{q\le j}  2^{\frac{3}{2}(k-j)}  \|P_{2^j}\partial_x  w_2 \|_{X^{0,1/2}} 
    \|P_{2^q} D_x^{3/4}  v \|_{L^4_{T,x}} \\
   & \lesssim &   \varepsilon^{1/2-1/9}   \gamma_k \, \|w_x\|_{X^{1,1/2}} (\varepsilon^{1/9} \| v_x \|_{L^4_{T, x}})   \        \end{eqnarray*}
with $ \|\gamma_k\|_{ l^2(\Z_+)}\lesssim 1 $. \\
      $\bullet $  Bound on $\sum_{N} \big|  I^{\mathcal{A}}_N \big|^2$. Note that since $ |\sigma|\sim \varepsilon N N_1 N_2 $ in this region, we do not have to sum on $ L $. 
      \begin{eqnarray*}
   I_{2^k}^{\mathcal{A}}      & \lesssim  &  \sqrt{\varepsilon}\, 
    2^{2k} \sum_{j\ge k } \sum_{q\le j}  2^{-(k+q+j)/2}
      \Bigl\| P_{2^k} P_{+hi} (P_{2^j} w_2\partial_x P_{-hi}P_{2^q} v)  \Bigr\|_{L^2_{tx}} \\
         & \lesssim &    \sqrt{\varepsilon} \sum_{j\ge k}  \sum_{q\le j}  2^{\frac{3}{2}(k-j)}  \|P_{2^j}\partial_x  w_2 \|_{L^4_{tx}} 
    \|P_{2^q} D_x^{1/2}  v \|_{L^4_{Tx}} \\
   & \lesssim &  \varepsilon^{1/2-1/9}   \gamma_k \, \|w_x\|_{X^{1,1/2}} (\varepsilon^{1/9} \| v_x \|_{L^4_{tx}})   \        \end{eqnarray*}
with $ \|\gamma_k\|_{ l^2(\Z_+)}\lesssim 1 $. \\
        $\bullet $  Bound on $\sum_{N}  \big|  I^{\mathcal{C}}_N\big|^2$.
     We separate two subregions. \\
       {\it a)  $\xi_1\le \frac{50}{\epsilon}$} Then we have $ |\xi_2|\le  \frac{50}{\epsilon}$. We write
       $$
        \Bigr(      \sum_{N} | I_{N}^{\mathcal{C}}  |^2\Bigl)^{1/2}\lesssim \ \|{\tilde v}\|_{X^{0,1}} \|{\mathcal F}^{-1}(|\widehat{w_2}|)\|_{L^\infty_{tx}} \lesssim \|w_2\|_{X^{1,1/2,1}} \|{\tilde v}\|_{X^{0,1}}
       $$
     where $ \widehat{\tilde v}= {\tilde \eta}_\epsilon \hat{v} $ and $ {\tilde \eta}_\epsilon $ is a smooth even function with support in $ \{|\xi|\le \frac{100}{\epsilon}\} $
 This is acceptable since, as in the preceding lemma due to the frequency localization of $v$,
        $$
        \|\tilde{v}\|_{X^{0,1}} \lesssim (1+\|v\|_{L^\infty_t H^1_x}) \|v\|_{L^\infty_t H^1_x}\; .
       $$
           {\it b)  $\xi_1> \frac{50}{\epsilon}$}  Then we write
          \begin{eqnarray*}
              I_{2^k}^{\mathcal{C}}  & \lesssim & \sum_{j\ge k} \epsilon \Bigl\|{\mathcal F}^{-1} \Bigl( |\xi^2 \xi_2| |\hat{v} |\, |\widehat{P_{2^j}w_2}|\Bigr)  \Bigr\|_{L^{4/3}_t L^2_x} \\
              & \lesssim &     \| v\|_{X^{-1,1}}  \sum_{j\ge k}    \|{\mathcal F}^{-1}(|\widehat{P_{2^j} \partial_x w_2}|)\|_{L^4_t L^\infty_x}\\
          &  \lesssim  & \gamma_k \| v\|_{X^{-1,1}} \|w\|_{X^{1,1/2}}
          \end{eqnarray*}
           where  we used \eqref{stri3} and the frequency localization of $ w_2$ in this subregion. This is acceptable, since in view of \eqref{hoBO} it is not too hard to check that 
           $$
            \| v\|_{X^{-1,1}}\lesssim (1+\|v\|_{L^\infty_t H^1_x}) \|v\|_{L^\infty_t H^1_x}\; .
            $$
            \qed \vspace*{2mm} \\
            Now the contribution of the term  $ \epsilon\partial_xP_{+hi}(WP_-(vv_x)) $ is easy to estimate as follows
 \begin{eqnarray}
\epsilon     \Bigl\|\partial_xP_{+hi}(WP_-(vv_x)) \Bigr\|_{X^{1,-1/2,1}} &\lesssim
 &  \epsilon\Bigl\|\partial_x^2 P_{+hi}(WP_-(vv_x)) \Bigr\|_{L^2_{x,T}}\nonumber  \\
 & \lesssim & \epsilon^{2/3}  (\epsilon^{1/9}\|w_x\|_{L^6_{x,T}}) (\epsilon^{1/9} \|v\|_{L^6_{x,T}})
  (\epsilon^{1/9} \|v_x\|_{L^6_{x,T}})
    \end{eqnarray}
which is acceptable thanks to \eqref{stri2}. Moreover, the contributions of each term of the left-hand side of \eqref{bilina3}  is controlled by 
$ \|v\|_{L^\infty_{t}H^1_x}^2 $ since, by the frequency projections, only low frequencies of $ v$, $ v^2 $ and $e^{iF}v$ are involved. It thus remains to control the term $ \varepsilon  \|\partial_xP_{+hi}\big(e^{iF}v^3\big)\|_{X^{1,-\frac12,1}} $. This is the aim of the following lemma. 
\begin{lemma}
\begin{eqnarray}
\varepsilon  \|\partial_xP_{+hi}\big(e^{iF}v^3\big)\|_{X^{1,-\frac12,1}}  & \lesssim &  \|v\|_{L^\infty_T H^1}^3 +\varepsilon \|v\|_{L^\infty_t H^1_x}^4 \nonumber\\
& & +  \varepsilon \Bigl[ \|v\|_{L^\infty_T H^1}^3(1+ \|v\|_{L^\infty_T H^1})
+ \| v_{xx} \|_{\widetilde{L^\infty_{x} L^2_t} }\|v\|_{L^4_x L^\infty_{t}} ^2\Bigr]
\end{eqnarray}
\end{lemma}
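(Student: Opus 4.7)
The plan is to reduce the $X^{1,-\frac12,1}_\varepsilon$-norm to an $L^{1+}_T H^1_x$-norm via the embedding used in the proof of Proposition \ref{bilinb}, and then to take the derivative inside. Since $\partial_x F = Av$,
$$
\partial_x(e^{iF}v^3) = iA\, e^{iF}v^4 + 3\, e^{iF}v^2 v_x,
$$
so it suffices to estimate $\varepsilon \|e^{iF}v^4\|_{L^{1+}_T H^1_x}$ and $\varepsilon \|e^{iF}v^2 v_x\|_{L^{1+}_T H^1_x}$. Lemma \ref{lemma1} strips off the phase $e^{iF}$ in both terms at the cost of a factor $(1+\|v\|_{L^\infty_T H^1_x}^2)$.

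For the first quantity, the algebra property of $H^1(\R)$ together with $T\le 1$ yields immediately the bound $\varepsilon(1+\|v\|_{L^\infty_T H^1_x}^2)\|v\|_{L^\infty_T H^1_x}^4$, which accounts for the $\varepsilon\|v\|^4$ and $\varepsilon\|v\|^3(1+\|v\|)$ contributions on the right-hand side. For the second quantity one must control $\|v^2 v_x\|_{H^1}$, and since a crude Sobolev bound would cost $\|v\|_{H^2}$, we split $v^2 v_x = P_{LO}(v^2 v_x) + P_{HI}(v^2 v_x)$. The low-frequency part is handled by Bernstein: $\|P_{LO}(v^2 v_x)\|_{H^1}\lesssim \|v\|_{L^\infty}^2\|v_x\|_{L^2} \lesssim \|v\|_{L^\infty_T H^1}^3$, which using $\varepsilon\le 1$ and $T\le 1$ produces the first (non-$\varepsilon$-weighted) term $\|v\|_{L^\infty_T H^1}^3$ in the statement.

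For the high-frequency part, mimicking the paraproduct decomposition in the proof of Proposition \ref{bilinb}, we write for $k \ge 4$
$$
P_{2^k}(v^2 v_x) = P_{2^k}\!\!\sum_{j\ge k-3}\!\!\bigl[P_{2^j}(v_x)\,P_{\le 2^{j-1}}(v^2) + P_{2^j}(v^2)\,P_{\le 2^j}(v_x)\bigr].
$$
We control $\|P_{2^j}v_x\|_{L^\infty_x L^2_T} \lesssim 2^{-j}\|P_{2^j}v_{xx}\|_{L^\infty_x L^2_T}$, which sums in $j$ to $\|v_{xx}\|_{\widetilde{L^\infty_x L^2_T}}$, while $\|P_{\le 2^{j-1}}(v^2)\|_{L^2_x L^\infty_T}\lesssim \|v\|_{L^4_x L^\infty_T}^2$ by Hölder's inequality in $x$. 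A discrete Young inequality in the dyadic parameters $(k,j)$, together with the decay factor $2^{-(j-k)(s+1)}$ arising from the derivative falling on the highest frequency, closes the summation and yields the final $\varepsilon \|v_{xx}\|_{\widetilde{L^\infty_x L^2_T}}\|v\|_{L^4_x L^\infty_T}^2$ contribution.

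The main obstacle is the compatibility between the phase $e^{iF}$ and the paraproduct decomposition needed for the high-frequency piece of $e^{iF}v^2 v_x$: Lemma \ref{lemma1} removes $e^{iF}$ globally in $H^1$ but does not commute with a dyadic projection. This is circumvented by applying Lemma \ref{lemma1} \emph{first} to reduce to $\|v^2 v_x\|_{H^1}$, and then splitting; alternatively one may decompose $e^{iF} = P_{lo}(e^{iF}) + P_{\ge 1}(e^{iF})$ and use the frequency-localized bound $\|P_{2^j}(e^{iF})\|_{L^\infty_{x,T}} \lesssim 2^{-j}\|v\|_{L^\infty_{T,x}}$ exactly as in the proof of \eqref{apriori v.14b}. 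Either route closes the estimate without introducing any $H^2$-regularity on $v$.
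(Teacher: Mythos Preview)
Your approach is essentially the paper's: embed $X^{1,-\frac12,1}_\varepsilon$ into $L^{1+}_T H^1_x$, expand the derivative, strip the phase $e^{iF}$ via Lemma~\ref{lemma1}, handle the $v^4$ contribution by the algebra property of $H^1$, and treat the remaining derivative piece by a dyadic decomposition. The only substantive difference is that the paper writes this last piece as $e^{iF}\partial_x(v^3)$ and decomposes $v^3$ \emph{trilinearly} (one high-frequency $v$, two low-frequency $v$'s), while you write it as $3e^{iF}v^2v_x$ and decompose the product $v^2\cdot v_x$ \emph{bilinearly}.

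That bilinear choice leaves a gap. You only estimate the first paraproduct piece $P_{2^j}(v_x)\,P_{\le 2^{j-1}}(v^2)$, where the high frequency sits on $v_x$ and the step $\|P_{2^j}v_x\|_{L^\infty_xL^2_T}\lesssim 2^{-j}\|P_{2^j}v_{xx}\|_{L^\infty_xL^2_T}$ indeed works cleanly. The second piece $P_{2^j}(v^2)\,P_{\le 2^j}(v_x)$, where the high frequency falls on $v^2$, is never addressed: there your ``derivative falling on the highest frequency'' lands on $v^2$, producing $\|\partial_xP_{2^j}(v^2)\|=2\|P_{2^j}(vv_x)\|$, which is not $\|P_{2^j}v_{xx}\|$ and does not pair with the remaining low-frequency factor $P_{\le 2^j}(v_x)$ to give $\|v\|_{L^4_xL^\infty_T}^2$. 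To close this piece you would still have to unpack $P_{2^j}(v^2)$ so that a single copy of $v$ carries frequency $\sim 2^j$, and at that point you have reproduced the paper's trilinear picture. The cleanest fix is simply to use $v^2v_x=\tfrac13\partial_x(v^3)$ and decompose $v^3$ directly as the paper does: for $k\ge 4$,
\[
\|\partial_x^2P_{2^k}(v^3)\|_{L^2_{tx}}
\lesssim 2^{2k}\sum_{j\ge k-4}\|P_{2^j}v\|_{L^\infty_xL^2_T}\|P_{\le 2^j}v\|_{L^4_xL^\infty_T}^2
\lesssim \sum_{j\ge k-4}2^{2(k-j)}\|P_{2^j}v_{xx}\|_{L^\infty_xL^2_T}\|v\|_{L^4_xL^\infty_T}^2,
\]
which closes in $\ell^2_k$ by discrete Young and gives precisely the $\varepsilon\|v_{xx}\|_{\widetilde{L^\infty_xL^2_T}}\|v\|_{L^4_xL^\infty_T}^2$ term.
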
 
\proof. 
First, proceeding as in the proof of Proposition \ref{bilinb},  it is not too hard to see that 
$$
\varepsilon  \|\partial_x P_{+hi}\big(e^{iF}v^3\big)\|_{X^{1,-\frac12,1}}  \lesssim 
 \|v\|_{L^\infty_t H^1_x}^3 + \varepsilon \|v\|_{L^\infty_t H^1_x}^4+ \|v\|_{L^\infty_t H^1_x}^4 +\varepsilon  \| \partial^2_x P_{HI} (v^3) \|_{L^{2}_t L^2_x} \; .
$$
It thus remains to bound the last term of the right-hand side of the above inequality.  We notice  that for $ k\ge 4   $ we may bound 
 its $P_{2^k}$-projection by 
$$
\| \partial_x^2  P_{2^k} v^3 \|_{ L^2_{tx}}\lesssim \Bigl\| \partial_x^2  P_{2^k}\Bigl( \sum_{j\ge k-4 } P_{2^j} v S_{2^{j}}v S_{2^{j}}v  \Bigr)\Bigr\|_{L^2_{tx}} \;.
$$
Hence, proceeding as in Proposition  \ref{bilinb}, it is easy to check that for $ k\ge 4 $, 
$$
\| \partial_x^2  P_{2^k} v^3 \|_{L^{2}_t L^2_x}\lesssim \gamma_k \|v_{xx} \|_{L^\infty_{x} L^2_t }\|v\|_{L^4_x L^\infty_{t}} ^2  $$
where $ \|(\gamma_k)\|_{l^2(\Z_+)} \lesssim 1 $. This completes the proof of the lemma. \qed\vspace{1mm}\\

Finally  to close the estimates we have to control some norms of $ v$ in terms of $ w $. 
\begin{lemma} \label{apriori vbis}
Let $0<T\le1$, $0\le\theta\le 1$, $0<\epsilon \le 1$ and $v$ be a solution to
\eqref{hoBO} in the time interval $[0,T]$. Then, it holds that
\begin{equation} \label{apriori v.33}
\varepsilon^{1/9} \|\partial_x  v\|_{L^6_{x,t}} \lesssim \|v_0\|_{H^1}
+\big(1+\|v\|_{L^{\infty}_TH^1_x}^2\big)\|w\|_{X^{s,1/2,1}}
+ \varepsilon^{1/9} \|v\|_{L^{\infty}_TH^1_x} \|\partial_x  v\|_{L^6_{x,T}},
\end{equation}
\begin{equation} \label{apriori v.34}
\varepsilon^{1/4} \|  v\|_{L^4_{x} L^\infty_t } \lesssim \|v_0\|_{H^1}
+\big(1+\|v\|_{L^{\infty}_TH^1_x}^2\big)\|w\|_{X^{s,1/2,1}} 
+\varepsilon^{1/4}\|v\|_{L^{\infty}_TH^1_x} \|  v\|_{L^4_{x} L^\infty_T} ,
\end{equation}
and 
\begin{equation} \label{apriori v.35} 
\begin{split}
\varepsilon^{1/2} \| \partial_x^2  v\|_{\widetilde{L^{\infty}_x L^2_T}}  \lesssim & \|v_0\|_{H^1}+
\big(1+\|v\|_{L^{\infty}_TH^1_x}^2\big)\|w\|_{X^{s,1/2,1}} \\
  &+\|v\|_{L^{\infty}_TH^1_x} \Bigl( \|v\|_{L^{\infty}_TH^1_x} + \varepsilon^{1/2} \| \partial_x^2 v \|_{\widetilde{L^\infty _xL^{2}_T}}\Bigr).
\end{split}
\end{equation}
\end{lemma}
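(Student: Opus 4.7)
The proof follows the template of Proposition \ref{apriori v}: each of the three estimates is obtained by splitting $v = P_{LO}v + P_{HI}v$ and treating each piece separately, with the added feature that we now track the precise $\varepsilon$-dependence supplied by the new Strichartz, maximal-function, and smoothing estimates of Section 6.1. The low-frequency part is handled via Duhamel's formula for equation \eqref{hoBOepsilon1}: one applies the relevant uniform-in-$\varepsilon$ dispersive estimate to the free evolution $V_\varepsilon(t)P_{LO}v_0$, while the nonlinear contribution
\[
\int_0^t V_\varepsilon(t-t')P_{LO}\bigl(cvv_x-d\partial_x(v\mathcal{H}v_x+\mathcal{H}(vv_x))\bigr)(t')\,dt'
\]
is bounded by Bernstein's inequalities together with a crude $L^1_TL^2_x$ estimate of the nonlinearity, controlled by $\|v\|_{L^\infty_T H^1_x}^2$. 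The high-frequency part is handled via the gauge identity \eqref{P+v},
\[
iAP_{+HI}v=P_{+HI}(e^{-iF}w)+P_{+HI}(P_{+hi}e^{-iF}\partial_xP_{lo}(e^{iF}))+P_{+HI}(P_{+HI}e^{-iF}\partial_xP_{-hi}(e^{iF})),
\]
combined with paraproduct decompositions.

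For \eqref{apriori v.33} I would apply \eqref{stri2} to the free evolution, which yields the $\varepsilon^{-1/9}$ factor; the high-frequency gauge term $P_{+HI}(e^{-iF}w)$ is controlled by $\|w\|_{X^{1,1/2,1}}$ via the Bourgain-space embedding $X^{1,\frac13+}\hookrightarrow L^6_{x,t}$ obtained by interpolation from \eqref{stri2}, combined with Lemma \ref{lemma1} to absorb the multiplication by $e^{-iF}$. The two correction terms in \eqref{P+v}, after paraproduct decomposition, contribute the self-referential factor $\varepsilon^{1/9}\|v\|_{L^\infty_TH^1_x}\|\partial_x v\|_{L^6_{x,t}}$ via $\|\partial_xP_{\pm hi}(e^{\pm iF})\|\sim\|v\|_{L^\infty_TH^1_x}$. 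Estimate \eqref{apriori v.34} follows an identical scheme, now with the maximal-function bound \eqref{stri5} in place of \eqref{stri2}; after paraproduct decomposition of the gauge corrections (exactly as in the proof of \eqref{apriori v.13}) one picks up the self-referential $\varepsilon^{1/4}\|v\|_{L^\infty_T H^1_x}\|v\|_{L^4_xL^\infty_T}$.

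Estimate \eqref{apriori v.35} is the most delicate. Rather than splitting at a fixed dyadic cutoff, I would split the frequencies at the threshold $N\sim 1/\varepsilon$: for $N\gtrsim 10/\varepsilon$ the sharp smoothing bound \eqref{stri6} applied via Duhamel produces exactly the $\varepsilon^{-1/2}$ weight sought, with the right-hand side nonlinearity of \eqref{hoBOepsilon1} being bounded in $L^2_TH^0_x$ by $\|v\|_{L^\infty_TH^1_x}^2$ together with $\|w\|_{X^{1,\frac12,1}}$ via Lemma \ref{lemma1}; for $N\lesssim 1/\varepsilon$ the Sobolev embedding $H^{\frac12+}(\R)\hookrightarrow L^\infty(\R)$ combined with Bernstein inequalities yields an $L^\infty_xL^2_T$ bound of size $\lesssim\varepsilon^{-\frac12}(\|v_0\|_{H^1}+\|v\|_{L^\infty_T H^1_x}^2)$, still within the $\varepsilon^{-1/2}$ budget. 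The dyadic $\widetilde{L^\infty_xL^2_T}$ norm is then recovered by squaring and summing over $N$. On the high-frequency side of \eqref{P+v}, the paraproduct decomposition is performed as in \eqref{apriori v.14b}, and Young's inequality for the discrete convolution in dyadic indices produces the self-referential $\|v\|_{L^\infty_TH^1_x}\varepsilon^{\frac12}\|\partial_x^2v\|_{\widetilde{L^\infty_xL^2_T}}$.

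The main obstacle is matching precisely the $\varepsilon$-powers on both sides of \eqref{apriori v.35}: the Kato smoothing \eqref{stri6} is available only for frequencies $N\gtrsim 10/\varepsilon$, and one must carefully handle the sub-threshold regime without breaking the $\varepsilon^{-1/2}$ budget, while also keeping the self-referential tail on the right-hand side linear in $\|\partial_x^2v\|_{\widetilde{L^\infty_xL^2_T}}$ so that it can be absorbed for small $\|v\|_{L^\infty_TH^1_x}$. The analogous self-referential terms in \eqref{apriori v.33}--\eqref{apriori v.34} are closed in the same manner by the smallness of $\|v\|_{L^\infty_TH^1_x}$ guaranteed by the scaling argument from Section 5.
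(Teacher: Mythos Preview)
Your proposal is correct and follows the same route as the paper: the authors' proof consists of exactly the sentence ``proceed as in Proposition~\ref{apriori v} with \eqref{stri1}--\eqref{stri5} for \eqref{apriori v.33}--\eqref{apriori v.34}, and as in \eqref{apriori v.14b} with \eqref{stri6} for \eqref{apriori v.35}'', together with the Bernstein observation $\|P_{\le 100/\varepsilon}\,v\|_{L^\infty_xL^2_T}\lesssim\varepsilon^{-1/2}\|v\|_{L^2_{T,x}}$ for the sub-threshold regime, which is precisely the mechanism you isolate. One wording point: in your treatment of \eqref{apriori v.35} for $N\gtrsim 10/\varepsilon$ you should not invoke Duhamel on the equation for $v$ (the nonlinearity $\varepsilon\partial_x(v\mathcal{H}v_x)$ would then reproduce the very $\partial_x^2v$ you are estimating); the smoothing \eqref{stri6} is applied to $w$ through the gauge identity \eqref{P+v} and the $X^{0,\frac12,1}$ embedding, exactly as you correctly describe in your last sentence about the paraproduct.
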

\proof
 \eqref{apriori v.33} and \eqref{apriori v.34} can be proven exactly as in Proposition \ref{apriori v} with \eqref{stri1}-\eqref{stri5} in hand.
 \eqref{apriori v.35}  can be proven as \eqref{apriori v.14b} in Proposition \ref{apriori v} with \eqref{stri6} in hand, using that by Bernstein inequality, for any $ v\in L^2_{T,x} $ and $ \varepsilon>0 $  it holds 
 $$ \|P_{\le 100/\varepsilon} v\|_{L^\infty_x L^2_T} \lesssim \varepsilon^{-1/2} \|v\|_{L^2_{Tx}} \; .
 $$ 
Gathering Lemmas \ref{bilineairepsilon1}-\ref{apriori vbis} we obtain that \eqref{hoBO} is uniformly well-posed in $ H^1(\R) $ , i.e. 
\begin{proposition} \label{theo3} 
For any $ R>0 $ there exists a positive time $ T=T(R) $ and a positive real number $ C(R) $  such that for all $0<\varepsilon \le 1 $ and any initial data 
 $v_0\in H^1(\mathbb R)$,  with $ \|v_0^j\|_{H^1} \le C(R) $     , it holds 
 \begin{equation}\label{yyy}
\|S_\varepsilon(t) v_0^j \|_{L^\infty_T H^1_x} \le C(R) \; .
 \end{equation}
 Moreover for any couple of initial data $(v_0^1,v_0^2) \in  H^1(\mathbb R)^2  $ with   $ \|v_0^j\|_{H^1} \le C(R) $ and $ P_1(v_0^1)=P_1(v_0^2) $
  it holds 
 \begin{equation}\label{yy} 
  \|S_\varepsilon(t) v_0^1 -S_\varepsilon(t)v_0^2 \|_{L^\infty_T H^1_x} \le C(R) \|v_0^1-v_0^2\|_{H^1} 
 \end{equation}
\end{proposition}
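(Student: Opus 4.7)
The plan is to mimic the proof of Theorem \ref{theo1} sketched in Section 5, but now driven by the $\varepsilon$-uniform linear estimates of Subsection 6.1, the $\varepsilon$-uniform bilinear estimates of Subsection 6.2 (Lemma \ref{bilineairepsilon1} and the two subsequent lemmas), and the $\varepsilon$-uniform \emph{a priori} estimates on $v$ in terms of $w$ from Lemma \ref{apriori vbis}. The crucial structural input is Remark \ref{condition}: the hypothesis $\rho=\sqrt{3}\rho_1$ forces $\alpha_1=0$ in \eqref{eqw}, so the Benjamin--Ono-type forcing term $\partial_x P_{+hi}(e^{iF}v^2)$, which has no natural smallness in $\varepsilon$, \emph{drops out}. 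Without this cancellation no uniform bound in $X^{1,1/2,1}_{\varepsilon,T}$ could be hoped for.

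I would first introduce the $\varepsilon$-weighted master norm
\begin{equation*}
\widetilde N_T(v) := \max\Bigl\{\|v\|_{L^\infty_T H^1_x},\ \varepsilon^{1/9}\|\partial_x v\|_{L^6_{x,T}},\ \varepsilon^{1/4}\|v\|_{L^4_x L^\infty_T},\ \varepsilon^{1/2}\|\partial_x^2 v\|_{\widetilde{L^\infty_x L^2_T}},\ \|w\|_{X^{1,1/2,1}_{\varepsilon,T}}\Bigr\},
\end{equation*}
whose weights mirror exactly the $\varepsilon$-powers appearing on the right-hand sides of Lemma \ref{apriori vbis}. Inserting the gauge equation \eqref{eqw} (with its $v^2$ term suppressed) into the linear bounds \eqref{prop1.1.2}--\eqref{prop1.2.1} and combining the two bilinear lemmas following Lemma \ref{bilineairepsilon1}, together with the low-frequency-controlled terms coming from \eqref{bilina3} and Proposition \ref{bilinb}, yields a closed polynomial inequality of the form
\begin{equation*}
\widetilde N_T(v)\ \lesssim\ (1+\|v_0\|_{H^1}^2)\|v_0\|_{H^1}\ +\ \widetilde q\bigl(\widetilde N_T(v)\bigr)\,\widetilde N_T(v),
\end{equation*}
where $\widetilde q$ is a polynomial without constant term whose coefficients are independent of $\varepsilon\in(0,1]$ and $T\in(0,1]$. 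Continuity in $T$ and the scaling argument at the beginning of Section 5 then give \eqref{yyy} for smooth data, first for small $H^1$-data and then for arbitrary $R$-balls at the cost of shortening $T=T(R)$; globality on $[0,T]$ follows from Proposition \ref{smoothsolutions} reapplied finitely many times thanks to the propagation of higher Sobolev norms.

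For the Lipschitz estimate \eqref{yy}, set $u:=v_1-v_2$, $z:=w_1-w_2$ and $G:=F[v_1]-F[v_2]$. The linearized analogs of the bilinear lemmas in Subsection 6.2, applied to the equation satisfied by $z$, deliver $\widetilde N_T(u)\lesssim \|u(0)\|_{H^1}+\widetilde r(\widetilde N_T(v_1),\widetilde N_T(v_2))\,\widetilde N_T(u)$. The only new ingredient is an $L^\infty_{T,x}$ control of $G$, needed to handle $e^{iF[v_1]}-e^{iF[v_2]}$; exactly as in Lemma~4.1 of \cite{MP}, one splits $G=G_{lo}+G_{hi}$, controlling $G_{lo}$ through its evolution ODE (which starts from zero precisely because $P_1 v_0^1=P_1 v_0^2$) and $G_{hi}=\partial_x^{-1}P_{hi}(v_1-v_2)$ by Bernstein. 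Existence for arbitrary $v_0\in H^1$ is then obtained by approximating $v_0$ by the truncations $v_0^j:=\mathcal F_x^{-1}(\chi_{[-j,j]}\mathcal F_x v_0)$, which share the same low-frequency part; the Lipschitz bound makes $\{S_\varepsilon(\cdot)v_0^j\}_j$ Cauchy in all the norms defining $\widetilde N_T$ and the limit is the desired solution, with all constants uniform in $\varepsilon$.

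The main obstacle is precisely to make the bilinear bootstrap uniform in $\varepsilon$, since the two dispersive terms $b\mathcal H\partial_x^2$ and $a\varepsilon\partial_x^3$ become comparable on the frequency window $|\xi|\sim 1/\varepsilon$, where the symbol $2b\xi-3a\varepsilon\xi^2$ of the group derivative vanishes and the Strichartz/smoothing estimates of Lemmas \ref{strichartz}--\ref{maximal} degenerate. The splittings $\widehat w=\widehat{w_1}+\widehat{w_2}$ introduced in the proofs of Section 6.2 are designed exactly to isolate this degenerate window: on $\{|\xi|\notin[1/(4\varepsilon),1/(2\varepsilon)]\}$ the $\varepsilon$-independent Strichartz bounds \eqref{stri3}--\eqref{stri5} apply, while on its complement the resonance identity $\sigma-\sigma_1-\sigma_2=\xi\xi_2(2-3\varepsilon\xi_1)$ supplies a lower bound $\max(|\sigma|,|\sigma_1|,|\sigma_2|)\gtrsim\varepsilon|\xi\xi_1\xi_2|$ that absorbs the two extra derivatives. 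Tracking the $\varepsilon$-powers through every subcase of every bilinear estimate and verifying that they balance across the five components of $\widetilde N_T$ is the technical heart of the proof.
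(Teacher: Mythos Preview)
Your proposal is correct and follows essentially the same route as the paper, which in fact gives no detailed proof at all: the paper simply writes ``Gathering Lemmas \ref{bilineairepsilon1}--\ref{apriori vbis} we obtain that \eqref{hoBO} is uniformly well-posed in $H^1(\R)$'' and states the proposition. Your $\varepsilon$-weighted master norm $\widetilde N_T$, the bootstrap closure via the Section~6 bilinear lemmas with the $v^2$ term suppressed by Remark~\ref{condition}, and the Lipschitz argument via the $G_{lo}/G_{hi}$ splitting of \cite{MP} are exactly the ingredients intended. One small imprecision: you cite Proposition~\ref{bilinb} for the cubic forcing, but that proposition carries the non-uniform factor $\|J^s_x\partial_x v\|_{\widetilde{L^\infty_xL^2_T}}$ controlled in \eqref{apriori v.14b} with a loss $\varepsilon^{-3/2+}$; the correct replacement is the unnumbered Section~6 lemma bounding $\varepsilon\|\partial_xP_{+hi}(e^{iF}v^3)\|_{X^{1,-1/2,1}}$ by $\varepsilon\|v_{xx}\|_{\widetilde{L^\infty_xL^2_t}}\|v\|_{L^4_xL^\infty_t}^2=(\varepsilon^{1/2}\|v_{xx}\|)(\varepsilon^{1/4}\|v\|_{L^4_xL^\infty_T})^2$, which your master norm is already designed to absorb.
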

With this proposition in hand, Theorem \ref{theo2} follows by general arguments developed for instance in \cite{GW}. We fix  an initial data  $v_0\in H^1(\R) $ and $ \alpha >0 $ and we would like to prove that for $ \varepsilon>0 $ small enough, 
$$
\|S_\varepsilon(t) v -S(t)v\|_{L^\infty_T H^1_x} \le \alpha 
$$
where $ T=T(\|v_0\|_{H^1}) $. The result for any fixed $ T>0 $ follows by iterating the argument and using the continuity of the flow-map for the  Benjamin-Ono equation. 
First, thanks to \eqref{yy} there exists $ r_\alpha>1 $ such that for all $ \varepsilon\in ]0,1] $ it holds 
$$
\| S_\varepsilon(t) v_0 - S_\varepsilon(t) P_{\le r_\alpha} v_0\|_{L^\infty_T H^1_x} \le \alpha/3\; \hbox{and } 
\| S(t) v_0 - S(t) P_{\le r_\alpha} v_0\|_{L^\infty_T H^1_x} \le \alpha/3 
$$
On the other hand, denoting by $ w_{r_\alpha}$ the gauge transform of $ S(t) P_{r_\alpha} v_0 $ and noticing that 
 the Benjamin-Ono equation \eqref{BO} can be rewritten as  \begin{align*}
& \partial_tv-b\mathcal{H}\partial^2_xv-
           a\epsilon \partial_x^3v-cv\partial_xv+d\epsilon
           \partial_x(v\mathcal{H}\partial_xv-\mathcal{H}(v\partial_xv)) \\
           & =d\epsilon
           \partial_x(v\mathcal{H}\partial_xv-\mathcal{H}(v\partial_xv))  -a\epsilon \partial_x^3v \; ,
\end{align*}
   we can proceed exactly as in the obtention of the Lipschitz bound \eqref{yy} to get  
 $$
 \|S_\varepsilon (t) P_{\le r_\alpha} v_0-S(t) P_{\le r_\alpha} v_0\|_{L^\infty_T H^1_x} \lesssim C(R) T \varepsilon  \Bigl( \| w_{r_\alpha}\|_{L^\infty_T H^3}
  +p\Bigl(N^3_T(S(t)P_{\le r_\alpha} v_0\Bigr)\Bigr)
 $$
 where $ p $ is a polynomial function and the $ N^s_T$-norm is defined in \eqref{N}.
 This yields the result by taking $\varepsilon >0 $ small enough. 
 
 \vspace{0,5cm}

\noindent \textbf{Acknowledgments.} The authors would like to thank Professor Jean-Claude Saut for some helpful comments. D.P. was partially supported by the projects Pronex E-26/110560/2010-APQ1 and FAPERJ E-26/11-564/2008. He also would like to thank the Department of Mathematics at the University of Chicago for the kind hospitality during the redaction of this work.  L.M. is grateful to the Institute Sch\"odinger of Wien for the kind hospitality during the redaction of this work.
 
\bibliographystyle{amsplain}

\end{document}